\definecolor{deepred}{rgb}{0.5,0,0}
\definecolor{deepblue}{rgb}{0,0,0.5}
\definecolor{deepgreen}{rgb}{0,0.5,0}
\newcommand{\Id}{\text{id}}
\providecommand{\keywords}[1]
{
  \small	
  \textbf{\textit{Keywords---}} #1
}
\newtheorem{prop}{Proposition}
\numberwithin{prop}{section}
\newtheorem{thm}{Theorem}
\numberwithin{thm}{section}
\begin{document}

\title{\vspace{-1.8cm}Heaps of Fish:\\ arrays, generalized associativity and heapoids}

\author{Carlos Zapata-Carratal\'a$^{1, }$\footnote{lead author, \url{c.zapata.carratala@gmail.com}}  {\,\,\,}   Xerxes D. Arsiwalla$^{2, 3}$\footnote{\url{x.d.arsiwalla@gmail.com}} {\,\,\,} Taliesin Beynon $^{3}$\footnote{\url{tali@taliesin.ai}}\\
{\it \small $^{1}$Edinburgh Mathematical Physics Group, United Kingdom}\\
{\it \small $^{2}$Pompeu Fabra University, Barcelona, Spain}\\
{\it \small $^{3}$Wolfram Research, United States}
}

\date{}

\maketitle

\sloppy

\begin{abstract}
    In this paper we investigate a ternary generalization of associativity by defining a diagrammatic calculus of hypergraphs that extends the usual notions of tensor networks, categories and relational algebras. In doing so we rediscover the ternary structures known as heaps and are able to give a more comprehensive treatment of their emergence in the context of dagger categories and their generalizations. Our key insight is to approach associativity as a confluence property of hypergraph rewrite systems. This approach allows us to define a notion of ternary category and heapoid, where morphisms bind three objects simultaneously, and suggests a systematic study of higher arity forms of associativity.
\end{abstract}

\keywords{heap, heapoid, generalized associativity, ternary algebra, matrix algebra, cubic matrix, relational algebra, ternary relation, category, dagger category, groupoid, generalized category, linear algebra, tensor network, higher order systems, hypergraph, rewrite systems, confluence}

\tableofcontents

\newpage

\section{Introduction} \label{intro}

The present work is part of the ongoing research of the Higher Arity Project collaboration \cite{collab2022arity}. The results shown here are to be taken as the first outcomes of a broader program that aims to better understand compositional aspects of higher order systems and to develop novel mathematical structures that are tailor-made for the modelling of higher order interactions in complex systems.\newline

In this paper we generalize the standard formalisms of multilinear algebra \cite{wald2010general,abraham2012manifolds}, tensor networks \cite{biamonte2017tensor, okunishi2021developments}, relational algebra \cite{burch1991peircean,behrisch2013relational}, and category theory \cite{leinster2014basic} by introducing the concept of array, which encompasses both matrices and relations as indexed objects with values in a commutative semiring. In particular, we focus on the algebra of $3$-index arrays aiming to recover the recent work on cubic matrices \cite{kerner2008ternary,abramov2009algebras} that has identified the following ternary product:
\begin{equation}\label{fishmultplication}
    \sum_{pqr} a_{ijp}\cdot b_{qrp} \cdot c_{qrk} \tag{f}
\end{equation}
which we call the \emph{fish product} for reasons that will become apparent below. Our goal is to investigate the general algebro-compositional properties of this operation.\newline

One of the algebraic structures that appear naturally in our study of the fish product are the ternary algebras known as \emph{semiheaps} which satisfy a generalized version of associativity:
\begin{equation}\label{semiheap}
    ((abc)de)=(a(dcb)e)=(ab(cde)). \tag{sh}
\end{equation}
Semiheaps were introduced by V.V. Wagner \cite{wagner1953theory} (in Russian) during the 1950s while studying partial bijections between sets in an attempt to develop a concise algebraic theory for transition functions of manifold charts in differential geometry. A detailed history of the origins of the ideas that lead to modern semiheap theory, as well as translations of Wagner's original papers, can be found in \cite{hollings2017wagner}. Semiheaps are among the few ternary structures that have received any significant attention from the mathematical community. This can be seen from their recent applications to Morita equivalence \cite{lawson2011generalized}, pseudogroups \cite{kock2007principal}, affine structures \cite{hawthorn2011near,breaz2022heaps}, Lie theory \cite{brzezinski2022trusses} and quantum mechanics \cite{bruce2022semiheaps}.\newline

The fish product appeared in some of the earliest investigations of ternary relational algebra. With roots dating back to the pioneering work of C.S. Peirce \cite{peirce1870description,peirce1880algebra} in the late 1800s, the first recorded use of the fish product appears in a 1967 paper by W.S. McCulloch and R. Moreno-D\'iaz \cite{mcculloch1967triadas} on what they called `triadas', which essentially amount to set-theoretic ternary relations. This is an excerpt from page 340 of \cite{mcculloch1967triadas} showing a multiplication of $3$-index arrays and a supporting diagram in the shape of a fish where the $i$ $j$ indices are placed at the tips of the tail and the $k$ index is placed at the head:
\begin{center}
    \includegraphics[scale=0.35]{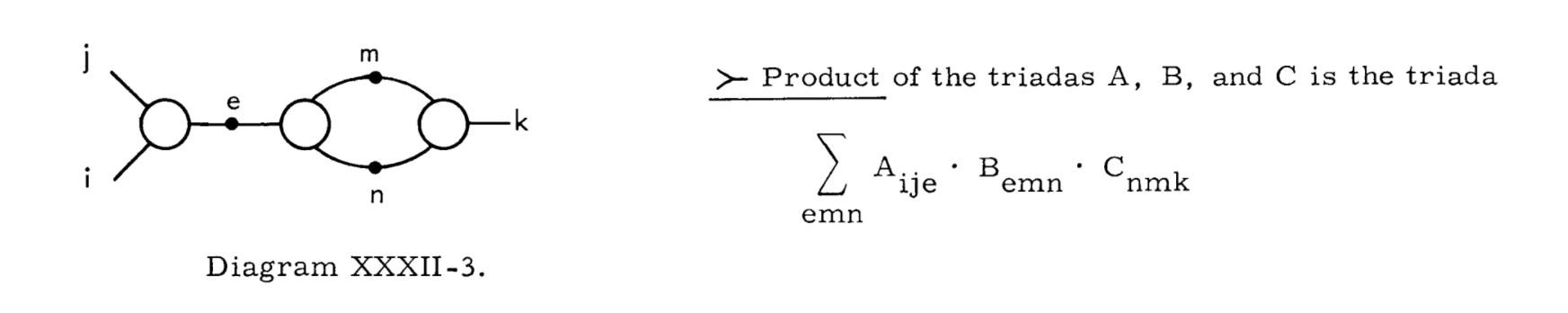}
\end{center}
This line of research gained some traction in the cybernetics community during the 1960s and 1970s. A remarkable example, that in fact came very close to developing the notion of plex algebra we introduce in Section \ref{plex}, is a 1972 paper by C.R. Longyear \cite{longyear1972further} where the fish product is diagrammatically articulated in full detail and the correct sequentialization of the fish product in terms of a ternary multiplication of $3$-index arrays is given for the first time. This is an excerpt from page 13 of \cite{longyear1972further} showing the sequentialization of a fish diagram:
\begin{center}
    \includegraphics[scale=0.3]{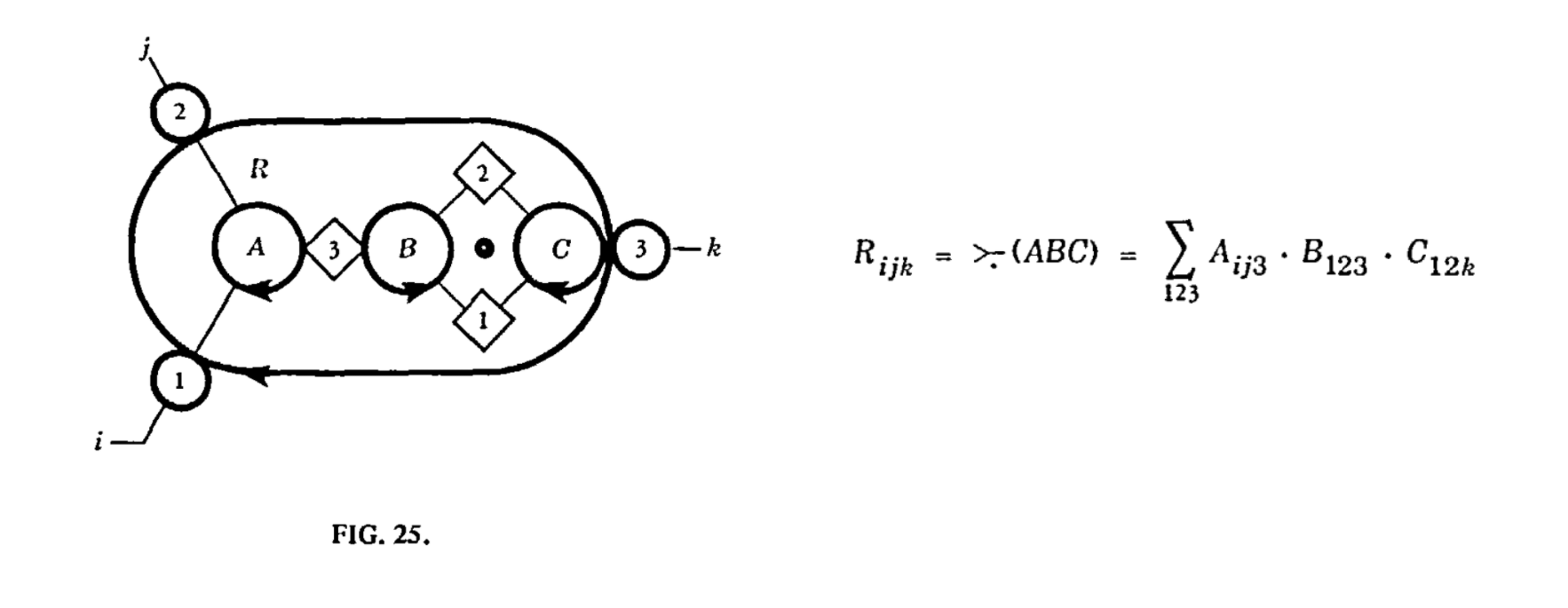}
\end{center}
Unfortunately, due to the decline of the cybernetics movement towards the late 20$^\text{th}$ century, this line of work did not continue to the present day. The fish product remained -- and still largely remains -- unknown to the broader mathematical community. To the best of our knowledge, the next time the fish product occurs in the mathematical literature is in the work of V. Abramov and S. Shitov on cubic matrices \cite{abramov2009algebras}, where it was rediscovered in a systematic computer search for ternary cubic matrix operations satisfying some known forms of generalized associativity. This is an excerpt from page 14 of \cite{abramov2009algebras} showing the possible contraction patterns that define $3$-index array multiplications satisfying the semiheap axiom (\ref{semiheap}):
\begin{center}
    \includegraphics[scale=0.26]{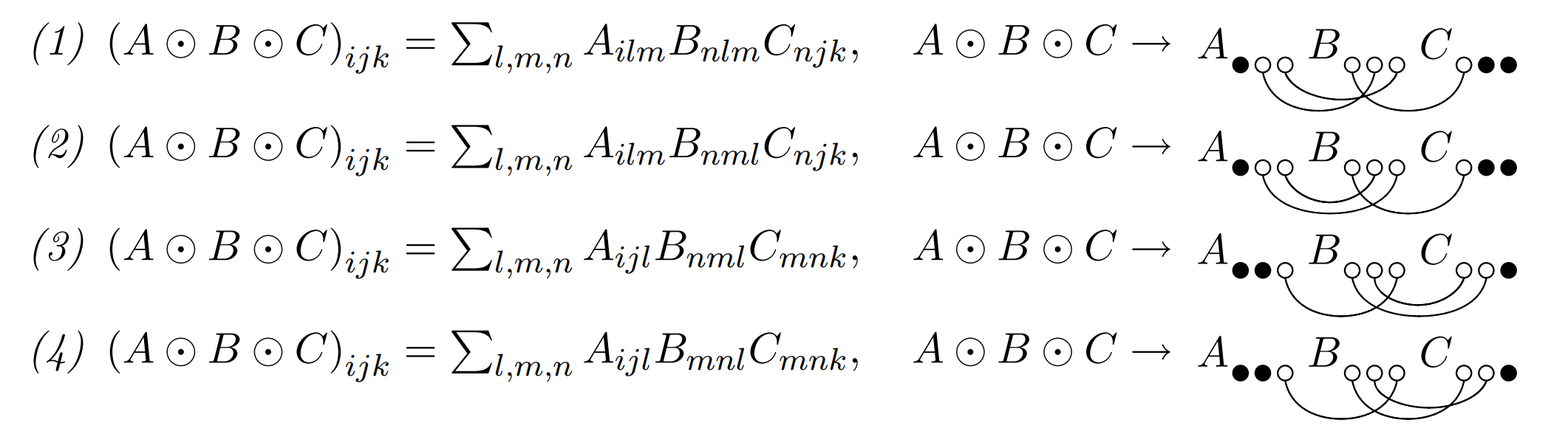}
\end{center}
As far as we can see from the published literature, this was the first time that the fish product and semiheaps were explicitly linked, a connection that does not appear in the research on triadas by the cyberneticians -- semiheap theory was likely unknown to them. We aim to restore the lineage of the fish product by contextualizing it in a general formalism of higher order algebra and by clearly articulating its connection with semiheap theory. In particular, we shall show that the four different multiplications $\textit{(1)}$ - $\textit{(4)}$ above are all instances of a single algebraic operation.\newline

Our main insight comes from the development of a diagrammatic calculus based on hypergraphs that generalizes the diagrams of category theory \cite{leinster2014basic} and tensor networks \cite{okunishi2021developments}. The basic idea is to represent multi-index objects, such as higher order matrices and higher arity relations, as hyperedges of a hypergraph whose vertices represent indices. For example, a $3$-index array is denoted by:
\begin{center}
\begin{tikzpicture}[line join = round, line cap = round]

\begin{scope}[scale=1.2, xshift=0, yshift=-10]

\coordinate [label=above:$k$] (2) at (0.5,{0.5*sqrt(3)});
\coordinate [label= below right:$j$] (1) at (1,0);
\coordinate [label=below left:$i$] (0) at (0,0);

\draw[lightgray, opacity=0.5,fill=lightgray,fill opacity=0.5] (1)--(0)--(2)--cycle;

\end{scope}

\node[] at (0.6,0) {$a$};

\node[] at (2.5,0) {$\sim$};

\node[] at (4,0) {$a_{ijk}$};

\end{tikzpicture}
\end{center}
This notation allows to write the fish product as the following simple hypergraph:
\begin{center}
\begin{tikzpicture}[line join = round, line cap = round]

\begin{scope}[xshift=0, yshift=0]

\begin{scope}[scale=0.7, xshift=0, yshift=0]

\coordinate [label=right:$k$] (6) at ({3*sqrt(3)},0);
\coordinate [label=below:$r$] (5) at ({2*sqrt(3)},-1);
\coordinate [label=above:$q$] (4) at ({2*sqrt(3)},1);
\coordinate [label=above:$p$] (3) at ({sqrt(3)},0);
\coordinate [label=below left:$j$] (2) at (0,-1);
\coordinate [label=above left:$i$] (1) at (0,1);
\coordinate [] (0) at (0,0);

\draw[gray, opacity=0.5,fill=lightgray,fill opacity=0.5] (1)--(2)--(3)--cycle;
\draw[gray, opacity=0.5,fill=lightgray,fill opacity=0.5] (3)--(4)--(5)--cycle;
\draw[gray, opacity=0.5,fill=lightgray,fill opacity=0.5] (4)--(5)--(6)--cycle;

\end{scope}

\node[] at ({0.7*0.6},0) {$a$};
\node[] at ({0.7*(2*sqrt(3)-0.6)},0.05) {$b$};
\node[] at ({0.7*(2*sqrt(3)+0.6)},0) {$c$};

\filldraw[darkgray] (5) circle (0.07);
\filldraw[darkgray] (4) circle (0.07);
\filldraw[darkgray] (3) circle (0.07);

\end{scope}

\node[] at (5.2,0) {$\sim$};

\node[] at (7,0) {$(abc)_{ijk}$};

\node[] at (8,0) {$=$};

\node[] at (10,-0.1) {$\displaystyle\sum_{pqr} a_{ijp}\cdot b_{qrp} \cdot c_{qrk}$};

\end{tikzpicture}
\end{center}
where the vertices marked with a black dot represent index contractions. Our diagrammatic calculus will reveal that the fish product satisfies a form of generalized associativity.\newline

This observation stems from the interpretation that associativity, as an algebraic axiom, is the consequence of the confluence property of a rewrite system. Such a viewpoint has a longstanding tradition in computer science \cite{barendsen2003term}, with D. Knuth being one of its forerunners \cite{knuth1983simple}, and has seen some applications to abstract algebra in recent years \cite{poinsot2010partial,endrullis2019confluence}. Our approach takes direct inspiration from the standard formalism of category theory \cite{leinster2014basic}, the recent work of E. Gnang \cite{gnang2014combinatorial,gnang2020bhattacharya} and T. Beynon \cite{beynon2022projects}, and S. Wolfram's multiway systems \cite{wolfram2002new,wolfram2020wpp,wolfram2020class}. Our central result is summarized in the following confluent hypergraph rewrite system:
\begin{center}
\begin{tikzpicture}[line join = round, line cap = round, scale=0.8]

\begin{scope}[xshift=-45, yshift=30]

\begin{scope}[scale=0.7, xshift=0, yshift=0]

\coordinate [] (9) at ({5*sqrt(3)},0);
\coordinate [] (8) at ({4*sqrt(3)},-1);
\coordinate [] (7) at ({4*sqrt(3)},1);
\coordinate [] (6) at ({3*sqrt(3)},0);
\coordinate [] (5) at ({2*sqrt(3)},-1);
\coordinate [] (4) at ({2*sqrt(3)},1);
\coordinate [] (3) at ({sqrt(3)},0);
\coordinate [] (2) at (0,-1);
\coordinate [] (1) at (0,1);
\coordinate [] (0) at (0,0);

\draw[gray, opacity=0.5,fill=lightgray,fill opacity=0.5] (1)--(2)--(3)--cycle;
\draw[gray, opacity=0.5,fill=lightgray,fill opacity=0.5] (3)--(4)--(5)--cycle;
\draw[gray, opacity=0.5,fill=lightgray,fill opacity=0.5] (4)--(5)--(6)--cycle;
\draw[gray, opacity=0.5,fill=lightgray,fill opacity=0.5] (6)--(7)--(8)--cycle;
\draw[gray, opacity=0.5,fill=lightgray,fill opacity=0.5] (8)--(7)--(9)--cycle;

\end{scope}

\node[] at ({0.7*0.6},0) {$a$};
\node[] at ({0.7*(2*sqrt(3)-0.6)},0.05) {$b$};
\node[] at ({0.7*(2*sqrt(3)+0.6)},0) {$c$};
\node[] at ({0.7*(4*sqrt(3)-0.6)},0.05) {$d$};
\node[] at ({0.7*(4*sqrt(3)+0.6)},0) {$e$};

\filldraw[darkgray] (8) circle (0.07);
\filldraw[darkgray] (7) circle (0.07);
\filldraw[darkgray] (6) circle (0.07);
\filldraw[darkgray] (5) circle (0.07);
\filldraw[darkgray] (4) circle (0.07);
\filldraw[darkgray] (3) circle (0.07);

\end{scope}

\begin{scope}[xshift=-170, yshift=-91]

\begin{scope}[scale=0.7, xshift=0, yshift=0]

\coordinate [] (6) at ({3*sqrt(3)},0);
\coordinate [] (5) at ({2*sqrt(3)},-1);
\coordinate [] (4) at ({2*sqrt(3)},1);
\coordinate [] (3) at ({sqrt(3)},0);
\coordinate [] (2) at (0,-1);
\coordinate [] (1) at (0,1);
\coordinate [] (0) at (0,0);

\draw[gray, opacity=0.5,fill=lightgray,fill opacity=0.5] (1)--(2)--(3)--cycle;
\draw[gray, opacity=0.5,fill=lightgray,fill opacity=0.5] (3)--(4)--(5)--cycle;
\draw[gray, opacity=0.5,fill=lightgray,fill opacity=0.5] (4)--(5)--(6)--cycle;

\end{scope}

\node[] at ({0.7*0.6},0.05) {$abc$};
\node[] at ({0.7*(2*sqrt(3)-0.6)},0.05) {$d$};
\node[] at ({0.7*(2*sqrt(3)+0.6)},0) {$e$};

\filldraw[darkgray] (5) circle (0.07);
\filldraw[darkgray] (4) circle (0.07);
\filldraw[darkgray] (3) circle (0.07);

\end{scope}

\begin{scope}[xshift=-20, yshift=-91]

\begin{scope}[scale=0.7, xshift=0, yshift=0]

\coordinate [] (6) at ({3*sqrt(3)},0);
\coordinate [] (5) at ({2*sqrt(3)},-1);
\coordinate [] (4) at ({2*sqrt(3)},1);
\coordinate [] (3) at ({sqrt(3)},0);
\coordinate [] (2) at (0,-1);
\coordinate [] (1) at (0,1);
\coordinate [] (0) at (0,0);

\draw[gray, opacity=0.5,fill=lightgray,fill opacity=0.5] (1)--(2)--(3)--cycle;
\draw[gray, opacity=0.5,fill=lightgray,fill opacity=0.5] (3)--(4)--(5)--cycle;
\draw[gray, opacity=0.5,fill=lightgray,fill opacity=0.5] (4)--(5)--(6)--cycle;

\end{scope}

\node[] at ({0.7*0.6},0) {$a$};
\node[] at ({0.7*(2*sqrt(3)-0.6)},0.05) {$bcd$};
\node[] at ({0.7*(2*sqrt(3)+0.6)},0) {$e$};

\filldraw[darkgray] (5) circle (0.07);
\filldraw[darkgray] (4) circle (0.07);
\filldraw[darkgray] (3) circle (0.07);

\end{scope}

\begin{scope}[xshift=130, yshift=-91]

\begin{scope}[scale=0.7, xshift=0, yshift=0]

\coordinate [] (6) at ({3*sqrt(3)},0);
\coordinate [] (5) at ({2*sqrt(3)},-1);
\coordinate [] (4) at ({2*sqrt(3)},1);
\coordinate [] (3) at ({sqrt(3)},0);
\coordinate [] (2) at (0,-1);
\coordinate [] (1) at (0,1);
\coordinate [] (0) at (0,0);

\draw[gray, opacity=0.5,fill=lightgray,fill opacity=0.5] (1)--(2)--(3)--cycle;
\draw[gray, opacity=0.5,fill=lightgray,fill opacity=0.5] (3)--(4)--(5)--cycle;
\draw[gray, opacity=0.5,fill=lightgray,fill opacity=0.5] (4)--(5)--(6)--cycle;

\end{scope}

\node[] at ({0.7*0.6},0) {$a$};
\node[] at ({0.7*(2*sqrt(3)-0.6)},0.05) {$b$};
\node[] at ({0.7*(2*sqrt(3)+0.6)},0.05) {$cde$};

\filldraw[darkgray] (5) circle (0.07);
\filldraw[darkgray] (4) circle (0.07);
\filldraw[darkgray] (3) circle (0.07);

\end{scope}

\begin{scope}[xshift=20, yshift=-220]

\coordinate [] (6) at ({3*sqrt(3)},0);
\coordinate [] (5) at ({2*sqrt(3)},-1);
\coordinate [] (4) at ({2*sqrt(3)},1);
\coordinate [] (3) at ({sqrt(3)},0);
\coordinate [] (2) at (0,-1);
\coordinate [] (1) at (0,1);
\coordinate [] (0) at (0,0);

\draw[gray, opacity=0.5,fill=lightgray,fill opacity=0.5] (1)--(2)--(3)--cycle;

\node[] at (0.62,0.05) {$abcde$};

\end{scope}

\draw[-stealth, line width=1pt] (-1.5,-0.5) to[out=210, in=80] (-2.5,-2);
\draw[-stealth, line width=1pt] (4.2,-0.5) to[out=-30, in=100] (5.2,-2);
\draw[-stealth, line width=1pt] (-2.5,-4.5) to[out=-80, in=150] (-1.5,-6);
\draw[-stealth, line width=1pt] (5.2,-4.5) to[out=-100, in=30] (4.2,-6);
\draw[-stealth, line width=1pt] (1,-0.5) to (1,-2);
\draw[-stealth, line width=1pt] (1,-4.5) to (1,-6);

\begin{scope}[scale=0.15, xshift=-700, yshift=-150]

\coordinate [] (9) at ({5*sqrt(3)},0);
\coordinate [] (8) at ({4*sqrt(3)},-1);
\coordinate [] (7) at ({4*sqrt(3)},1);
\coordinate [] (6) at ({3*sqrt(3)},0);
\coordinate [] (5) at ({2*sqrt(3)},-1);
\coordinate [] (4) at ({2*sqrt(3)},1);
\coordinate [] (3) at ({sqrt(3)},0);
\coordinate [] (2) at (0,-1);
\coordinate [] (1) at (0,1);
\coordinate [] (0) at (0,0);

\draw[gray, opacity=0.5,fill=darkgray,fill opacity=0.5] (1)--(2)--(3)--cycle;
\draw[gray, opacity=0.5,fill=darkgray,fill opacity=0.5] (3)--(4)--(5)--cycle;
\draw[gray, opacity=0.5,fill=darkgray,fill opacity=0.5] (4)--(5)--(6)--cycle;
\draw[gray, opacity=0.5,fill=lightgray,fill opacity=0.5] (6)--(7)--(8)--cycle;
\draw[gray, opacity=0.5,fill=lightgray,fill opacity=0.5] (8)--(7)--(9)--cycle;

\end{scope}

\begin{scope}[scale=0.15, xshift=-700, yshift=-1000]

\coordinate [] (6) at ({3*sqrt(3)},0);
\coordinate [] (5) at ({2*sqrt(3)},-1);
\coordinate [] (4) at ({2*sqrt(3)},1);
\coordinate [] (3) at ({sqrt(3)},0);
\coordinate [] (2) at (0,-1);
\coordinate [] (1) at (0,1);
\coordinate [] (0) at (0,0);

\draw[gray, opacity=0.5,fill=darkgray,fill opacity=0.5] (1)--(2)--(3)--cycle;
\draw[gray, opacity=0.5,fill=darkgray,fill opacity=0.5] (3)--(4)--(5)--cycle;
\draw[gray, opacity=0.5,fill=darkgray,fill opacity=0.5] (4)--(5)--(6)--cycle;

\end{scope}

\begin{scope}[scale=0.15, xshift=270, yshift=-150]

\coordinate [] (9) at ({5*sqrt(3)},0);
\coordinate [] (8) at ({4*sqrt(3)},-1);
\coordinate [] (7) at ({4*sqrt(3)},1);
\coordinate [] (6) at ({3*sqrt(3)},0);
\coordinate [] (5) at ({2*sqrt(3)},-1);
\coordinate [] (4) at ({2*sqrt(3)},1);
\coordinate [] (3) at ({sqrt(3)},0);
\coordinate [] (2) at (0,-1);
\coordinate [] (1) at (0,1);
\coordinate [] (0) at (0,0);

\draw[gray, opacity=0.5,fill=lightgray,fill opacity=0.5] (1)--(2)--(3)--cycle;
\draw[gray, opacity=0.5,fill=darkgray,fill opacity=0.5] (3)--(4)--(5)--cycle;
\draw[gray, opacity=0.5,fill=darkgray,fill opacity=0.5] (4)--(5)--(6)--cycle;
\draw[gray, opacity=0.5,fill=darkgray,fill opacity=0.5] (6)--(7)--(8)--cycle;
\draw[gray, opacity=0.5,fill=lightgray,fill opacity=0.5] (8)--(7)--(9)--cycle;

\end{scope}

\begin{scope}[scale=0.15, xshift=270, yshift=-1000]

\coordinate [] (6) at ({3*sqrt(3)},0);
\coordinate [] (5) at ({2*sqrt(3)},-1);
\coordinate [] (4) at ({2*sqrt(3)},1);
\coordinate [] (3) at ({sqrt(3)},0);
\coordinate [] (2) at (0,-1);
\coordinate [] (1) at (0,1);
\coordinate [] (0) at (0,0);

\draw[gray, opacity=0.5,fill=darkgray,fill opacity=0.5] (1)--(2)--(3)--cycle;
\draw[gray, opacity=0.5,fill=darkgray,fill opacity=0.5] (3)--(4)--(5)--cycle;
\draw[gray, opacity=0.5,fill=darkgray,fill opacity=0.5] (4)--(5)--(6)--cycle;

\end{scope}

\begin{scope}[scale=0.15, xshift=1000, yshift=-150]

\coordinate [] (9) at ({5*sqrt(3)},0);
\coordinate [] (8) at ({4*sqrt(3)},-1);
\coordinate [] (7) at ({4*sqrt(3)},1);
\coordinate [] (6) at ({3*sqrt(3)},0);
\coordinate [] (5) at ({2*sqrt(3)},-1);
\coordinate [] (4) at ({2*sqrt(3)},1);
\coordinate [] (3) at ({sqrt(3)},0);
\coordinate [] (2) at (0,-1);
\coordinate [] (1) at (0,1);
\coordinate [] (0) at (0,0);

\draw[gray, opacity=0.5,fill=lightgray,fill opacity=0.5] (1)--(2)--(3)--cycle;
\draw[gray, opacity=0.5,fill=lightgray,fill opacity=0.5] (3)--(4)--(5)--cycle;
\draw[gray, opacity=0.5,fill=darkgray,fill opacity=0.5] (4)--(5)--(6)--cycle;
\draw[gray, opacity=0.5,fill=darkgray,fill opacity=0.5] (6)--(7)--(8)--cycle;
\draw[gray, opacity=0.5,fill=darkgray,fill opacity=0.5] (8)--(7)--(9)--cycle;

\end{scope}

\begin{scope}[scale=0.15, xshift=1000, yshift=-1000]

\coordinate [] (6) at ({3*sqrt(3)},0);
\coordinate [] (5) at ({2*sqrt(3)},-1);
\coordinate [] (4) at ({2*sqrt(3)},1);
\coordinate [] (3) at ({sqrt(3)},0);
\coordinate [] (2) at (0,-1);
\coordinate [] (1) at (0,1);
\coordinate [] (0) at (0,0);

\draw[gray, opacity=0.5,fill=darkgray,fill opacity=0.5] (1)--(2)--(3)--cycle;
\draw[gray, opacity=0.5,fill=darkgray,fill opacity=0.5] (3)--(4)--(5)--cycle;
\draw[gray, opacity=0.5,fill=darkgray,fill opacity=0.5] (4)--(5)--(6)--cycle;

\end{scope}

\end{tikzpicture}
\end{center}
Since the order of alphabetic labels should be read from \emph{tail-to-head} in the anatomy of the fish product, the three middle terms of this rewrite system give the symbolic equations (\ref{semiheap}), which are precisely the defining axiom of a semiheap.\newline

The contents are organized as follows: we begin by presenting a concise summary of semiheap theory in Section \ref{heaps}; the general formalism of array algebra and the corresponding hypergraph diagram calculus is developed in Sections \ref{array} and \ref{plex}; we make our interpretation of associativity as rewrite system confluence precise in Section \ref{confl}; the fish product and its connection with semiheap theory is investigated in detail in Section \ref{fish}; a definition of heapoid and a ternary notion of category are proposed in Section \ref{heapoids}; and we conclude with some remarks on future work towards higher arity generalizations of associativity in Section \ref{towards}.\newline

\section{Heaps}\label{heaps}

Let $H$ be a set and $\eta : H \times H \times H \to H$ a ternary operation, we write $\eta (a,b,c) \equiv (abc)$. The pair $(H,\eta)$ is called a \textbf{semiheap} when the ternary operation satisfies the equations (\ref{semiheap}), that is
\begin{equation*}
    ((abc)de)=(a(dcb)e)=(ab(cde))
\end{equation*}
for all $a,b,c,d,e\in H$. Note that the middle term in (\ref{semiheap}) implies a departure from sequential associativity understood as the property satisfied by concatenation of a binary associative operation, i.e. the property that notationally allows for brackets to be dropped. Consequently, property (\ref{semiheap}) has been called weak associativity \cite{wagner1951ternary}, associativity of the second kind \cite{carlsson1976cohomology}, quasi-associativity \cite{kolar2000heap}, para-associativity \cite{hawthorn2009radical}, type B associativity \cite{kerner2008ternary} and pseudo-associativity \cite{hollings2017wagner}. Given a semiheap structure on a set $(H,\eta)$ it is possible to define another semiheap structure on the same set by reversing the sequential order of arguments: if the ternary operation $\eta$ satisfies (\ref{semiheap}) then so does $\overline{\eta}(a,b,c):=\eta(c,b,a)$ and $(H,\overline{\eta})$ is called the \textbf{reverse semiheap}. A map $\varphi:(H,\eta) \to (K,\kappa)$ satisfying
\begin{equation*}
    \varphi(\eta(a,b,c))=\kappa(\varphi(a),\varphi(b),\varphi(c))
\end{equation*}
for all $a,b,c\in H$ is called a \textbf{semiheap homomorphism}. Although largely unknown beyond specialist circles, semiheaps turn out to be minimal algebraic structures that appear naturally in several areas of mathematics such as manifold theory, relational calculus and category theory.\newline

The standard example of a semiheap structure is found in the set of binary relations between two arbitrary sets $\textsf{Rel}(A,B)$ by defining the following ternary operation:
\begin{equation*}
    \eta_{\textsf{Rel}}(R_1,R_2,R_3) := R_1 \circ R_2^\top \circ R_3
\end{equation*}
where $R_1,R_2,R_3\subset A\times B$ are binary relations, $\circ$ is the usual composition of relations and $\top$ denotes the transpose or converse relation. It is easy to see that (\ref{semiheap}) holds as a direct consequence of the associativity property of $\circ$ and the fact that $\top$ is idempotent and a $\circ$-antihomomorphism. More generally, the morphism sets of any dagger category $(\mathcal{C},\dag)$ carry a semiheap structure via the analogous construction:
\begin{equation*}
    \eta_{\mathcal{C}}(f,g,h) := f \circ g^\dag \circ h
\end{equation*}
for $f,g,h\in \mathcal{C}(A,B)$ and any two objects $A,B\in \mathcal{C}$. The commutative diagram that realizes this ternary operation in a dagger category displays a characteristic \emph{back-and-forth} pattern:
\begin{equation}\label{dag}
\begin{tikzcd}[sep=large]
A \arrow[rr, "f", bend left] \arrow[rr, "h", bend right] & & B \arrow[ll, "g^\dag"']
\end{tikzcd} \tag{dag}
\end{equation}

Another important class of examples of semiheaps are involuted semigroups. Let a set $G$ together with an associative binary operation $\cdot$ and a map $*:G\to G$ satisfying $(a^*)^*=a$ and $(a\cdot b)^*=b^*\cdot a^*$ for all $a,b\in G$, then it is easy to show that the ternary operation defined by
\begin{equation*}
    \eta_G(a,b,c) := a \cdot b^*\cdot c
\end{equation*}
satisfies (\ref{semiheap}) thus making $(G,\eta_G)$ into a semiheap. In particular, all groups carry a canonical semiheap structure via the above construction by considering the inversion involution $*=\,^{-1}$.\newline

The generalization of a neutral element for a ternary operation motivates the following notion: let a semiheap $(H,\eta)$, an element $e\in H$ is called a \textbf{left biunit} when
\begin{equation*}
    (eea)=a,
\end{equation*}
a \textbf{right biunit} when
\begin{equation*}
    (aee)=a
\end{equation*}
for all $a\in H$. When $e\in H$ is both a left and a right biunit it is simply called a \textbf{biunit}. A ternary operation for which all elements are biunits is sometimes called a \textbf{Malcev} operation. Biunits correspond to familiar situations in the examples of semiheaps presented above: bijections of sets are biunits in the semiheap of heterogeneous binary relations, unitary morphisms are biunits in the semiheaps of dagger categories, neutral elements in an involuted semigroup are biunits of the corresponding semiheap and all elements of a group are biunits of the corresponding semiheap.\newline

The existence of biunits in a semiheap allows to construct an involuted monoid.
\begin{thm}[Involuted Monoid of a Semiheap with Biunit, Thm. 8.2.8-9 \cite{hollings2017wagner}] \label{monoidsemiheap}
Let a semiheap $(H,\eta)$ and a biunit $e\in H$, then the binary operation
\begin{equation*}
    a\cdot_e b := (aeb)
\end{equation*}
and the map
\begin{equation*}
    a^{*_e} := (eae)
\end{equation*}
make $(H,\cdot_e,*_e)$ into an involuted monoid with $e$ as identity element. Furthermore, given another biunit $e'\in H$, there is an isomorphism of involuted monoids $\varphi_{ee'}:(H,\cdot_e,*_e)\to (H,\cdot_{e'},*_{e'})$ given by
\begin{equation*}
    \varphi_{ee'}(a) := (aee').
\end{equation*}
\end{thm}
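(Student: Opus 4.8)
The plan is to check each defining property of an involuted monoid directly, drawing only on the semiheap identity (\ref{semiheap}) and the two biunit collapses $(eex)=x$ and $(xee)=x$. The single most useful tool will be the \emph{outer} equality $((abc)de)=(ab(cde))$ of (\ref{semiheap}), which pushes a central biunit into an inner bracket; the \emph{reflection} equality $((abc)de)=(a(dcb)e)$, which reverses an inner triple, will be needed only for the statements involving $*_e$; and the biunit collapses will repeatedly delete a doubled $e$. For every equation the strategy is the same: reduce both sides to a common normal form. The monoid structure is then almost immediate. Associativity holds because $((aeb)ec)=(ae(bec))$ by the outer equality, and the two sides are exactly $(a\cdot_e b)\cdot_e c$ and $a\cdot_e(b\cdot_e c)$; the unit laws $e\cdot_e a=(eea)=a$ and $a\cdot_e e=(aee)=a$ are nothing but the biunit hypotheses. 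This already exhibits a monoid with identity $e$.

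Next I would verify that $*_e$ is an involutive anti-automorphism. For $(a^{*_e})^{*_e}=(e(eae)e)$ I would apply the reflection equality to rewrite it as $((eea)ee)$ and then collapse the two biunits, recovering $a$. For the anti-homomorphism law I would show that $(a\cdot_e b)^{*_e}=(e(aeb)e)$ and $b^{*_e}\cdot_e a^{*_e}=((ebe)e(eae))$ both reduce to the single expression $(eb(eae))$: the left side by reflection followed by the outer equality, the right side by the outer equality followed by the collapse $(ee(eae))=(eae)$. Since both normal forms agree, the anti-homomorphism identity follows.

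Finally I would establish the isomorphism $\varphi_{ee'}$, noting that $e'$ is itself a biunit so that $(e'e'x)=x$ and $(xe'e')=x$ are also available. Preservation of the unit is $\varphi_{ee'}(e)=(eee')=e'$. For the product I would reduce $\varphi_{ee'}(a\cdot_e b)=((aeb)ee')$ and $\varphi_{ee'}(a)\cdot_{e'}\varphi_{ee'}(b)=((aee')e'(bee'))$ to the common form $(ae(bee'))$, using the outer equality on the left and, on the right, the collapse $(e'e'(bee'))=(bee')$. Compatibility with the involutions reduces both $\varphi_{ee'}(a^{*_e})=((eae)ee')$ and $(\varphi_{ee'}(a))^{*_{e'}}=(e'(aee')e')$ to $(eae')$ by the same mixture of outer and reflection equalities together with the collapses $(eee')=e'$ and $(e'e'e)=e$. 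Bijectivity is cleanest by exhibiting the inverse explicitly as $\varphi_{e'e}$, since $\varphi_{e'e}(\varphi_{ee'}(a))=((aee')e'e)=(ae(e'e'e))=(aee)=a$, and symmetrically for the reverse composite.

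The computations hide no conceptual difficulty; the one place that demands care is the pair of involution statements, where the variables of the reflection equality must be matched to the bracketed expression in exactly the right order before any biunit is collapsed. The main obstacle is therefore purely one of bookkeeping, and the uniform discipline of reducing each side to a normal form is what keeps the matchings unambiguous.
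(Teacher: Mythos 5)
Your verification is correct, and all the key reductions check out: the reflection instance $(e(eae)e)=((eea)ee)=a$ for involutivity, the common normal form $(eb(eae))$ for the anti-homomorphism law, $(eae')$ for compatibility of $\varphi_{ee'}$ with the involutions, and $((aee')e'e)=(ae(e'e'e))=(aee)=a$ for invertibility are all valid applications of (\ref{semiheap}) and the biunit collapses. Note, however, that the paper does not prove this theorem at all: it imports it wholesale from Hollings--Lawson (Thm.\ 8.2.8--9 of \cite{hollings2017wagner}), so there is no in-paper argument to compare against; your direct equational computation is exactly the standard proof that the citation points to, and it supplies the details the paper omits. Two trivia you elide but which cost nothing: the right-hand reduction in the product-preservation step also needs one application of the outer equality, namely $((aee')e'(bee'))=(ae(e'e'(bee')))$, before the collapse $(e'e'(bee'))=(bee')$ can be applied; and preservation of units under $*_e$, i.e.\ $e^{*_e}=(eee)=e$, is an immediate biunit collapse (or follows formally from involutivity plus the anti-homomorphism law), so nothing is missing.
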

This construction is easily shown to be the converse of the semiheap structure corresponding to any involuted semigroup and thus the above theorem establishes a one-to-one correspondence between involuted monoids and semiheaps with a single biunit. The connection between semiheaps and involuted monoids is, in fact, more general.
\begin{thm}[Semiheaps Embed into Involution Monoids, Thm. 8.2.10-11 \cite{hollings2017wagner}] \label{embedsemiheap}
Any semiheap can be homomorphically embedded into a semiheap with a single biunit. Therefore, it follows from Theorem \ref{monoidsemiheap} that any semiheap can be embedded into an involuted monoid.
\end{thm}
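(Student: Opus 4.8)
The plan is to establish the substantive first assertion---every semiheap embeds into a semiheap carrying a biunit---after which the involuted-monoid statement is immediate from Theorem~\ref{monoidsemiheap}. I would take the target of the embedding to be a semiheap of binary relations. For any set $X$, the power set $\textsf{Rel}(X):=\textsf{Rel}(X,X)$ is an involuted monoid under relational composition $\circ$, converse $\top$ and identity the diagonal $\Delta_X$; its associated semiheap $\eta(R_1,R_2,R_3)=R_1\circ R_2^\top\circ R_3$ then has $\Delta_X$ as a biunit, since $R\circ\Delta_X^\top\circ\Delta_X=R$. It therefore suffices to produce, for a given semiheap $(H,\eta)$, an injective semiheap homomorphism $H\hookrightarrow\textsf{Rel}(X)$: the image is a sub-semiheap of a semiheap-with-biunit, and because $\textsf{Rel}(X)$ is already an involuted monoid the second claim follows at once (equivalently, one recovers the involuted-monoid structure from $\Delta_X$ through Theorem~\ref{monoidsemiheap}, whose converse remark guarantees that the resulting $a\cdot_{\Delta_X}b^{*_{\Delta_X}}\cdot_{\Delta_X}c$ agrees with the ambient semiheap operation).

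The engine of the argument is a Wagner-style faithful representation. A first, purely formal observation is that the bi-translations $\lambda_{a,b}\colon x\mapsto(abx)$ assemble into an involuted semigroup built from $H$: the right-hand equality in~(\ref{semiheap}) yields the composition rule $\lambda_{a,b}\circ\lambda_{c,d}=\lambda_{(abc),d}$, and the middle equality in~(\ref{semiheap}) makes $\lambda_{a,b}\mapsto\lambda_{b,a}$ an involutive anti-homomorphism (with well-definedness on the ``act identically'' congruence a short additional check, mirroring the fact that in $\textsf{Rel}(A,B)$ the datum $\lambda_{b,a}$ is the converse of $\lambda_{a,b}$). The delicate point is that this does not yet represent \emph{individual} elements of $H$: an abstract semiheap element behaves like a heteromorphism between two distinct objects, and in the absence of a biunit it cannot in general be realised as an endo-bi-translation on a single set---already in the group model $\eta(a,b,c)=ab^{-1}c$ the faithful move is to send $h$ to the graph of left multiplication $x\mapsto hx$, which lives in $\textsf{Rel}(G,G)$ and has no biunit-free description. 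I would therefore manufacture two index sets from the translation data, send each $h\in H$ to the relation recording how it mediates the left and right actions, and verify the homomorphism identity $\widehat{(abc)}=\hat a\circ\hat b^\top\circ\hat c$ together with injectivity directly from~(\ref{semiheap}). A relation in $\textsf{Rel}(A,B)$ is finally reinterpreted as a relation on $X=A\sqcup B$ supported in the $A$--$B$ block; this inclusion $\textsf{Rel}(A,B)\hookrightarrow\textsf{Rel}(X)$ preserves $\circ$ and $\top$, hence is a semiheap embedding, placing us in $\textsf{Rel}(X)$ as required.

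The main obstacle is exactly this faithful relational representation---in effect a reproof of Wagner's embedding theorem---and within it the twin demands of injectivity and of choosing the heterogeneous index sets correctly; by contrast the closure and involution identities for the $\lambda_{a,b}$, the inclusion $\textsf{Rel}(A,B)\hookrightarrow\textsf{Rel}(X)$, and the final appeal to Theorem~\ref{monoidsemiheap} are routine once the representation is secured. An alternative, more syntactic packaging of the same content would form the free involuted semigroup on the underlying set of $H$, freely adjoin an identity, and quotient by the congruence generated by $a\,b^{*}c=(abc)$; this yields an involuted monoid together with a canonical semiheap homomorphism out of $H$, and the sole nontrivial step is again to prove that this homomorphism is injective, for which one exhibits the relational representation as a faithful factor. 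Either way, once injectivity is in hand the theorem follows, with the biunit supplied by $\Delta_X$ (respectively the adjoined identity) and the involuted-monoid conclusion delivered by Theorem~\ref{monoidsemiheap}.
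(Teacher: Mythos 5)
You should note first that the paper itself contains no proof of this statement: Theorem \ref{embedsemiheap} is imported wholesale from \cite{hollings2017wagner} (Thms.\ 8.2.10--11), and the argument there is abstract and syntactic -- one builds the enveloping involuted monoid directly from $H$ (in effect the free involuted monoid on the underlying set of $H$ modulo the relations $ab^{*}c=(abc)$, or an equivalent direct construction), proving injectivity by a canonical-form analysis; binary relations play no role. Your proposal instead routes everything through a faithful relational representation $H\hookrightarrow\textsf{Rel}(A,B)$, and that is precisely the step you never carry out. ``Manufacture two index sets from the translation data, send each $h$ to the relation recording how it mediates the left and right actions, and verify the homomorphism identity together with injectivity'' is not a construction but a restatement of the theorem's entire difficulty. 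The parts you do verify are correct but routine: $\Delta_X$ is a biunit of the relation semiheap, the bitranslation calculus $\lambda_{a,b}\circ\lambda_{c,d}=\lambda_{(abc),d}$ and the anti-homomorphism $\lambda_{a,b}\mapsto\lambda_{b,a}$ follow from (\ref{semiheap}) exactly as you say, and the block inclusion $\textsf{Rel}(A,B)\hookrightarrow\textsf{Rel}(A\sqcup B)$ preserves $\circ$ and $\top$; but, as you yourself observe, the bitranslations represent \emph{pairs} of elements, not elements, so none of this produces the needed injective map on $H$.

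Moreover, the gap is not merely an omitted computation: it is genuinely doubtful that the relational route works at the stated level of generality. Representability by binary relations with converse as the involution is a restrictive property -- for involuted semigroups the representable ones form a proper subclass (a classical theme in the literature descending from Schein), and neither this paper nor \cite{hollings2017wagner} supplies any theorem that an arbitrary semiheap embeds into a semiheap of the form $\textsf{Rel}(A,B)$; indeed the point of the cited Thm.\ 8.2.10 is to obtain a biunit \emph{without} such a representation. Your closing ``alternative packaging'' is in fact the correct (and essentially the cited) proof, but as written it is circular: you propose to prove injectivity of the syntactic quotient ``by exhibiting the relational representation as a faithful factor,'' i.e.\ by appeal to the very engine that is missing. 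To close the argument you must establish injectivity intrinsically -- for instance by a normal-form/rewriting analysis of alternating words $a_1b_1^{*}a_2b_2^{*}\cdots$ under the reduction $ab^{*}c\mapsto(abc)$, where the identities (\ref{semiheap}) supply exactly the confluence needed (fittingly, in the spirit of Section \ref{confl}) -- after which the adjoined identity is the required biunit and Theorem \ref{monoidsemiheap} finishes the statement as you indicate.
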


A semiheap $(H,\eta)$ whose elements are all biunits is called a \textbf{heap}. It is easily checked that Theorems \ref{monoidsemiheap} and \ref{embedsemiheap} imply that any heap can be homomorphically embedded into a group. The condition that all elements of a semiheap be biunits is rather strong: it turns out that, under this condition, the equalities involving the non-associative term in (\ref{semiheap}) can be relaxed.
\begin{thm}[Heaps, Thm. 8.2.13 \cite{hollings2017wagner}] \label{heapaxiom}
A set $H$ with a ternary operation satisfying
\begin{equation}\label{heap}
    ((abc)de)=(ab(cde)) \tag{h}
\end{equation}
and
\begin{equation}\label{malcev}
    (aab)=b=(baa) \tag{m}
\end{equation}
for all $a,b,c,d,e \in H$, is a heap.
\end{thm}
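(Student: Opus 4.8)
The plan is to show that $(H,\eta)$ is not merely a semiheap with all elements biunits, but is concretely the canonical heap of a group, from which every required axiom follows for free. Observe first that condition (m) says precisely that every element is a biunit, so the only substantive task is to promote the one-sided identity (h) to the full semiheap axiom (sh), i.e.\ to establish the para-associative middle equality $((abc)de)=(a(dcb)e)$.

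First I would fix an arbitrary element $e\in H$ and define a binary operation by $a*b:=(aeb)$. Condition (m) immediately gives $e*b=(eeb)=b$ and $b*e=(bee)=b$, so $e$ is a two-sided identity. Associativity of $*$ follows in one line from (h): $(a*b)*c=((aeb)ec)=(ae(bec))=a*(b*c)$. Thus $(H,*,e)$ is a monoid. To exhibit inverses, I set $\bar a:=(eae)$; a short computation using (h) (read in both directions) together with the cancellations in (m) yields $a*\bar a=(ae(eae))=((aee)ae)=(aae)=e$ and, symmetrically, $\bar a*a=((eae)ea)=(ea(eea))=(eaa)=e$. Hence $(H,*,e)$ is a group with inversion $a^{-1}=\bar a$.

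The hard part will be nothing conceptual but rather the bracket bookkeeping concentrated in these two steps: every reduction must apply (h) with the parentheses aligned correctly to the pattern $((xyz)uv)=(xy(zuv))$ before an (m)-cancellation can be triggered, and this is where an error is most likely to creep in. I would therefore carry out each reduction explicitly rather than by analogy.

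Finally I would identify $\eta$ with the standard group heap. A further application of (h) gives $a*\bar b=(ae(ebe))=((aee)be)=(abe)$, and therefore $(a*\bar b)*c=((abe)ec)=(ab(eec))=(abc)$; that is, $(abc)=a\cdot b^{-1}\cdot c$ in the group $(H,*)$. Since the examples above show that the operation $a\cdot b^{-1}\cdot c$ on any group is a semiheap all of whose elements are biunits, $(H,\eta)$ is a heap. In particular para-associativity holds automatically, since both $((abc)de)$ and $(a(dcb)e)$ evaluate to the group product $a\,b^{-1}c\,d^{-1}e$, using that $(dcb)^{-1}=b^{-1}cd^{-1}$.
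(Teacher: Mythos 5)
Your proof is correct, but note that the paper does not actually prove this theorem: it is imported verbatim as Thm.\ 8.2.13 of \cite{hollings2017wagner}, so the only in-paper material to compare against is the basepoint machinery of Theorem \ref{monoidsemiheap}. Your argument is essentially a self-contained re-derivation of that machinery from the weaker hypotheses \eqref{heap} and \eqref{malcev} alone, and this is exactly the right move: invoking Theorem \ref{monoidsemiheap} directly would be circular, since it presupposes the full semiheap axiom \eqref{semiheap}, whose middle equality $((abc)de)=(a(dcb)e)$ is precisely what you must establish. I checked each of your five applications of \eqref{heap} and they all align correctly with the pattern $((xyz)uv)=(xy(zuv))$: namely $((aeb)ec)=(ae(bec))$, $((aee)ae)=(ae(eae))$, $((eae)ea)=(ea(eea))$, $((aee)be)=(ae(ebe))$, and $((abe)ec)=(ab(eec))$; combined with the cancellations \eqref{malcev} these yield the group structure, the identification $(abc)=a\cdot b^{-1}\cdot c$, and then para-associativity via $(dcb)^{-1}=b^{-1}cd^{-1}$, as you say. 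What your route buys beyond the bare statement is a structure theorem: every algebra satisfying \eqref{heap} and \eqref{malcev} is, after a choice of basepoint, the canonical heap of a group, which recovers in one stroke the group--heap correspondence the paper only alludes to after Theorem \ref{embedsemiheap}. Two cosmetic caveats: the construction needs $H\neq\emptyset$ to choose the basepoint (the empty heap satisfies the conclusion vacuously, so nothing is lost), and your fixed basepoint $e$ collides notationally with the bound variable $e$ in the statement of \eqref{heap}; since \eqref{heap} is universally quantified this causes no error, but you should rename one of the two letters in a written-up version.
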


A first example of heap is the semiheap structure on morphism sets of dagger categories restricted to unitary morphisms. In particular, bijections between sets naturally form a heap with the standard ternary operation defined for binary relations above. This was, in fact, the example that motivated the original definition of heaps by V.V. Wagner in \cite{wagner1951ternary} and the subsequent development of the theory of semiheaps and generalized heaps \cite{hollings2017wagner}.\newline

Heaps offer a compelling perspective that reframes the notion of groupoid in concise graph theoretic terms. This was first noted by A. Kock in \cite{kock1982algebraic} and we recast it here with a focus on the interpretation of groupoids as generalizations of group actions. Recall that a groupoid is a category $\mathcal{G}$ whose morphisms are all invertible. Given any object $A\in \mathcal{G}$, it is easy to see that the morphism set $\mathcal{G}(A,A)=:\mathcal{G}_A$ forms a group under composition of morphisms $(\mathcal{G}_A,\circ,1_A)$ and is thus called the \textbf{isotropy group} at $A$. Similarly, taking two objects $A,B\in \mathcal{G}$, the morphism set $\mathcal{G}(A,B)=:\mathcal{G}_{AB}$ inherits a heap structure since groupoids are a particular case of a dagger category where $\dag=^{-1}$. The morphism set between two objects of a groupoid together with the heap structure $(\mathcal{G}_{AB},\eta_{AB})$ is called the \textbf{translation heap}. Our first observation is that the translation heap of an object into itself $(\mathcal{G}_{AA},\eta_{AA})$ corresponds to the heap constructed from the isotropy group $(\mathcal{G}_A,\circ,1_A)$ regarded as an involuted semigroup:
\begin{equation*}
    (\mathcal{G}_{AA},\eta_{AA}) \cong (\mathcal{G}_A,\eta_{\mathcal{G}_A}).
\end{equation*}
Isotropy groups and translation heaps interact via left and right actions.
\begin{prop}[Isotropy Actions] \label{isotropyactions}
Let two objects of a groupoid $A,B\in \mathcal{G}$, then composition of morphisms defines commuting right and left actions of the isotropy groups on the translation heap
\begin{equation*}
    \mathcal{G}_A \circlearrowright (\mathcal{G}_{AB},\eta_{AB}) \circlearrowleft \mathcal{G}_B
\end{equation*}
that satisfy the following bi-invariance property with respect to the heap operation:
\begin{equation*}
    \eta_{AB}(f\circ a, b \circ g \circ a, b \circ h)=\eta_{AB}(f,g,h)
\end{equation*}
for all $a \in \mathcal{G}_A$, $b \in \mathcal{G}_B$ and $f,g,h\in \mathcal{G}_{AB}$.
\end{prop}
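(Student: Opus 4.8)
The plan is to handle both parts by elementary manipulation of composition in $\mathcal{G}$, the only structural input being that in a groupoid the dagger is literally the inverse, so that inversion is an involutive contravariant operation satisfying $(x\circ y)^{-1}=y^{-1}\circ x^{-1}$. Throughout I read $\circ$ as ``apply rightmost first'', which is the convention under which $\eta_{AB}(x,y,z)=x\circ y^{-1}\circ z$ is a well-typed morphism $A\to B$.

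First I would pin down the two actions. For $a\in\mathcal{G}_A$ and $f\in\mathcal{G}_{AB}$ the morphism $f\circ a$ again lies in $\mathcal{G}_{AB}$, since $a\colon A\to A$ and $f\colon A\to B$; setting $f\cdot a:=f\circ a$, I check the right-action axioms $f\cdot 1_A=f$ and $(f\cdot a)\cdot a'=f\cdot(a\circ a')$, both immediate from the unit and associativity laws. Dually, for $b\in\mathcal{G}_B$ I set $b\cdot f:=b\circ f\in\mathcal{G}_{AB}$ and verify the left-action axioms $1_B\cdot f=f$ and $b\cdot(b'\cdot f)=(b\circ b')\cdot f$. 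The two actions commute because $(b\circ f)\circ a=b\circ(f\circ a)$ by associativity, which is precisely the statement that postcomposition and precomposition commute. This disposes of the existence-of-actions portion of the claim.

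The substantive part is the bi-invariance identity, and here I would simply compute. Using contravariance of inversion to expand $(b\circ g\circ a)^{-1}=a^{-1}\circ g^{-1}\circ b^{-1}$, I obtain
\[
\eta_{AB}(f\circ a,\,b\circ g\circ a,\,b\circ h)=(f\circ a)\circ\bigl(a^{-1}\circ g^{-1}\circ b^{-1}\bigr)\circ(b\circ h).
\]
Regrouping by associativity, the adjacency $a\circ a^{-1}=1_A$ collapses the left end to $f$ and the adjacency $b^{-1}\circ b=1_B$ collapses the right end, leaving $f\circ g^{-1}\circ h=\eta_{AB}(f,g,h)$, as required.

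I expect no genuine obstacle here: the argument is pure cancellation of inverses. The only point demanding care is the bookkeeping of sources and targets together with the order reversal in $(b\circ g\circ a)^{-1}$. The asymmetric placement of $a$ and $b$ in the middle argument $g$ of the identity --- with $b$ on the left and $a$ on the right, opposite to their placement on $f$ and $h$ --- is exactly what compensates for $g$ entering the heap operation through its inverse $g^{-1}$, and verifying that this matching is the one that produces the cancellations is the whole content of the computation.
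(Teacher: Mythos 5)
Your proof is correct and takes the same route as the paper: the paper's proof is a one-line appeal to ``the definitions of isotropy groups and translation heaps, and the basic properties of morphism composition,'' and your explicit verification --- the action axioms via the unit and associativity laws, commutation of the actions via associativity, and bi-invariance via $(b\circ g\circ a)^{-1}=a^{-1}\circ g^{-1}\circ b^{-1}$ followed by cancellation of $a\circ a^{-1}$ and $b^{-1}\circ b$ --- is exactly the computation the paper leaves implicit. Your closing remark correctly identifies the only delicate point, namely that the reversed placement of $a$ and $b$ in the middle argument is what the order reversal of inversion requires for the cancellations to occur.
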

\begin{proof}
This follows directly from the definitions of isotropy groups and translation heaps, and the basic properties of morphism composition in categories.
\end{proof}
Note that translation heaps are formally dependent on the orientation of the objects, in particular, $\mathcal{G}_{AB}\neq \mathcal{G}_{BA}$ as sets. Nevertheless, the inverse involution of a groupoid induces a heap isomorphism between translation heaps in opposite orientations.
\begin{prop}[Reverse Translation Heaps] \label{reverseheaps}
Let two objects of a groupoid $A,B\in \mathcal{G}$, then there is a canonical isomorphism of heaps
\begin{equation*}
    (\mathcal{G}_{AB},\eta_{AB}) \cong (\mathcal{G}_{BA},\overline{\eta}_{BA}).
\end{equation*}
\end{prop}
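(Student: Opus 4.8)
The plan is to produce the isomorphism explicitly as the inversion map of the groupoid and then check that it intertwines the two heap operations. I would define $\Phi : \mathcal{G}_{AB} \to \mathcal{G}_{BA}$ by $\Phi(f) := f^{-1}$. Since every morphism of a groupoid is invertible, a morphism $f : A \to B$ has an inverse $f^{-1} : B \to A$, so $\Phi$ is well defined with image in $\mathcal{G}_{BA}$; because inversion is involutive, $\Phi$ is a bijection with inverse $g \mapsto g^{-1}$. This map uses only the groupoid structure and involves no arbitrary choices, which is precisely what makes the resulting isomorphism canonical.

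Next I would make the two operations explicit. As a groupoid is a dagger category with $\dag = {}^{-1}$, the translation heap operation reads $\eta_{AB}(f,g,h) = f \circ g^{-1} \circ h$ for $f,g,h \in \mathcal{G}_{AB}$, and similarly $\eta_{BA}(f,g,h) = f \circ g^{-1} \circ h$ for morphisms in $\mathcal{G}_{BA}$. The reverse operation swaps the outer arguments, so $\overline{\eta}_{BA}(f,g,h) = \eta_{BA}(h,g,f) = h \circ g^{-1} \circ f$.

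The heart of the argument is the verification that $\Phi$ is a heap homomorphism, i.e. that $\Phi(\eta_{AB}(f,g,h)) = \overline{\eta}_{BA}(\Phi(f),\Phi(g),\Phi(h))$. Using that inversion reverses composites and is involutive, the left-hand side is
\[
\Phi(f \circ g^{-1} \circ h) = (f \circ g^{-1} \circ h)^{-1} = h^{-1} \circ g \circ f^{-1},
\]
whereas the right-hand side is
\[
\overline{\eta}_{BA}(f^{-1},g^{-1},h^{-1}) = h^{-1} \circ (g^{-1})^{-1} \circ f^{-1} = h^{-1} \circ g \circ f^{-1}.
\]
The two expressions agree, so $\Phi$ is a homomorphism; being also a bijection, it is an isomorphism of heaps, as claimed. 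I note that this computation is independent of whether $\circ$ is read left-to-right or right-to-left, since it only relies on inversion reversing the order of a composite.

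I expect no genuine obstacle here, as the statement is essentially bookkeeping. The one conceptual point to get right — and the reason the reverse must appear on exactly one side — is that inversion is an \emph{anti}-homomorphism of composition, so forming the inverse of $f \circ g^{-1} \circ h$ reverses the order of the three factors. It is precisely the argument-swap built into $\overline{\eta}_{BA}$ that absorbs this reversal; without reversing one of the operations the inversion map would fail to be a homomorphism. This mirrors the role played by the $\circ$-antihomomorphism property of $*$ in the involuted-semigroup examples discussed earlier.
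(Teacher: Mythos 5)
Your proof is correct and follows essentially the same route as the paper's: both define the isomorphism as the inversion map $f \mapsto f^{-1}$ and verify the homomorphism property via the computation $(f \circ g^{-1} \circ h)^{-1} = h^{-1} \circ (g^{-1})^{-1} \circ f^{-1} = \overline{\eta}_{BA}(f^{-1},g^{-1},h^{-1})$. Your writeup is somewhat more explicit than the paper's (spelling out bijectivity, canonicity, and the role of the anti-homomorphism property of inversion), but the argument is identical in substance.
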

\begin{proof}
The isomorphism is given by the inversion involution which restricts to a bijection between morphism sets
\begin{equation*}
    ^{-1}:\mathcal{G}_{AB} \to \mathcal{G}_{BA}.
\end{equation*}
We easily check
\begin{equation*}
    \eta_{AB}(f,g,h)^{-1}=(f\circ g^{-1} \circ h)^{-1} = h^{-1} \circ (g^{-1})^{-1} \circ f^{-1}=\overline{\eta}_{BA}(f^{-1},g^{-1},h^{-1})
\end{equation*}
for all $f,g,h\in \mathcal{G}_{AB}$, thus giving the heap homomorphism.
\end{proof}
These results lead to a reinterpretation of groupoids, regarded as partial algebraic structures, as networks of total algebraic structures: a groupoid defines a graph whose vertices are isotropy groups and whose edges are translation heaps together with the isotropy actions. The resulting graph is simple from the fact that morphism sets in a category are unique, undirected as a consequence of Proposition \ref{reverseheaps} and component-complete since all objects linked via some morphisms are also linked with all adjacent objects by composition and invertibility.\newline

\begin{center}
\begin{tikzpicture}[node distance={35mm}, thick, main/.style = {draw, circle}]

\node[main] (1) {group};
\node[main] (2) [above right of=1] {group};
\node[main] (3) [below right of=1] {group};
\node[main] (4) [above right of=3] {group};
\node[main] (5) [above right of=4] {group};
\node[main] (6) [below right of=4] {group};

\draw (1) -- node[sloped,fill=white] {heap} (2);
\draw (1) -- node[sloped,fill=white] {heap} (3);
\draw (1) -- (4);
\draw (1) -- (5);
\draw (1) -- (6);
\draw (2) -- node[sloped,fill=white] {heap} (3);
\draw (2) -- node[sloped,fill=white] {heap} (4);
\draw (2) -- node[sloped,fill=white] {heap} (5);
\draw (3) -- node[sloped,fill=white] {heap} (4);
\draw (3) -- node[sloped,fill=white] {heap} (6);
\draw (4) -- node[sloped,fill=white] {heap} (5);
\draw (4) -- node[sloped,fill=white] {heap} (6);
\draw (5) -- node[sloped,fill=white] {heap} (6);

\draw (5.5,0) ellipse (6.5 and 4);

\node[] at (9.7,0) {groupoid};

\end{tikzpicture}
\end{center}

A concrete instance of the heap approach to groupoids can be seen in the recent application of heap theory to the study affine structures \cite{breaz2022heaps,brzezinski2022trusses}. In its simplest form, the connection between heaps and affine structures works as follows: the point set of an affine space $A$ can be regarded as the objects of a groupoid whose morphisms are the translations represented by the free vectors of the associated vector space $V$. The vector space $V$ can be regarded as an abelian group or, equivalently, as a heap with ternary operation given as above:
\begin{equation*}
    [v,u,w]:=v-u+w
\end{equation*}
for any vectors vectors $v,u,w\in V$. Translations then correspond to single vectors by the trivial fact that the heap operation is idempotent, thus recovering the standard notion that vectors determine translations between pairs of points of an affine space.

\section{Array Algebra} \label{array}

In this section we give a precise definition of the general notion of \emph{array} with the aim to encompass set-theoretic relations and higher order matrices -- also (inaccurately) called `tensors' in the computer science literature. Our focus will be on highlighting compositional properties of some natural operations that can be defined on arrays.\newline

Consider some collection of sets $\mathcal{U}$, we call $\mathcal{U}$ the \textbf{index universe} and its members $I\in \mathcal{U}$ \textbf{index sets} or simply \textbf{indices}. A finite multiset of indices $\{I_1,I_2,\dots,I_n\}$ is called a \textbf{constellation} and the cardinality $n$ of a constellation is called its \textbf{order}. Consider a set $V$ with some internal algebraic structure, we call $V$ the \textbf{value set} and its elements \textbf{values}. For the sake of concreteness, and looking to recover the desired examples of relations and matrices, the value set will be assumed to be a commutative semiring $(V,+,\cdot)$. A map from sequentially ordered tuples of a constellation into the value set
\begin{equation*}
    a: I_1 \times I_2 \times \cdots \times I_n \to V
\end{equation*}
is called a \textbf{$V$-valued array of order $n$ in $\mathcal{U}$}. We adopt the convention by which order $0$ arrays correspond to elements of $V$. By setting $\mathcal{U}$ to be the universe of all sets and the binary Boolean algebra as the value set $V=\mathbb{B}:=(\{0,1\},\vee, \wedge)$, we obtain ordinary set-theoretic relations. Similarly, by setting $\mathcal{U}_{\text{fin}}$ to be the universe of finite sets and a ring as the value set $V=(R,+,\cdot)$, we obtain matrices over $R$ scalars. Unless otherwise specified, the index universe $\mathcal{U}$ and the value set $V$ will always be fixed in our subsequent discussions, so we will simply use the term \textbf{array} to refer to a map $a$ as above or \textbf{$n$-array} to further specify the order. The evaluations of an array are called its \textbf{entries} and they are notated indistinctly by:
\begin{equation*}
    a(i_1,i_2,\dots, i_n) \equiv a_{i_1i_2\cdots i_n}.
\end{equation*}
A collection of arrays $\{a(I_1,\dots,K,\dots,I_n), b(J_1,\dots,K,\dots,J_m),\dots\}$ sharing a common index set $K$ in their constellation are said to be \textbf{incident} on the index $K$.\newline

We note in passing that the necessary structure required on an index universe for arrays to be defined is that of a symmetric monoidal category and thus we could take a universe to be any symmetric monoidal category. However, in order to stay close to our motivating examples, we shall limit our present discussion to sets and ordered tuples defined via the standard Cartesian product of sets. The symmetric monoidal structure of the index universe induces the following natural index manipulation operations on arrays:
\begin{itemize}
    \item Reordering. The braiding isomorphisms of the index universe induce natural bijections of Cartesian products of indices:
    \begin{equation*}
        \varphi_\sigma : I_1 \times I_2 \times \cdots \times I_n \to I_{\sigma(1)} \times I_{\sigma(2)} \times \cdots \times I_{\sigma(n)}
    \end{equation*}
    where $\sigma\in S_n$ is a permutation of the sequentially ordered indices. Given an $n$-array $a$ as above, the bijections $\varphi_\sigma$ define new $n$-arrays via pullback:
    \begin{equation*}
        a^\sigma := a \circ \varphi_\sigma^{-1}.
    \end{equation*}
    The $|S_n|$ $n$-arrays $a^\sigma$ are called the \textbf{reorderings} of $a$ and they all effectively carry the same information. When some of the indices are identical $I_1=I_2=\dots =I$ the reordering operation together with the automorphism group of the value set give the basis for the discussion of \textbf{array symmetry} as equivariance. The general study of array symmetry will be pursued in future work.
    \item Flattening. The associativity isomorphisms of the Cartesian product of sets allow to formally rewrite arrays as lower order arrays via the definition of multi-indices. For instance, given a $n$-array $a$ as above, we could regard $I_1\times I_2=:I$ as a multi-index set and define the \textbf{flattened} $(n-1)$-array:
    \begin{equation*}
        \underline{a}:I\times I_3 \times \cdots \times I_n \to V
    \end{equation*}
    which formally carries the same information as $a$.
    \item Broadening. The canonical projection $\text{pr}_1 : I_1 \times I_2 \to I_1$ allows to extend, or broadcast, as it is known in the computer science literature \cite{abadi2016tensorflow}, any given array to higher order arrays with redundant entries. For example, given a $n$-array $a$ as above and an index set $J\in \mathcal{U}$, we can define the \textbf{broadened} array
    \begin{equation*}
        \overline{a}: I_1 \times I_2 \times \cdots \times I_n \times J \to V
    \end{equation*}
    simply by pullback with the projection
    \begin{equation*}
        \overline{a}:=a \circ \text{pr}_1
    \end{equation*}
    where the $n$ first indices of the Cartesian product are grouped together as a single factor.
    \item Slicing. Given an $n$-array $a$ as above we can obtain lower order arrays by currying index values. For instance, let some element of the first index set $j\in I_1$, then the map
    \begin{equation*}
        a(j,-,\dots,-):I_2\times \cdots \times I_n \to V
    \end{equation*}
    is clearly a $(n-1)$-array. In general, currying $m\leq n$ index values produces a $(n-m)$-array called a \textbf{$m$-slice} of the original array $a$. The $n$-slices of a $n$-array correspond to its entries.
\end{itemize}
Combining these index manipulations gives considerable freedom in the explicit indexation of high order arrays. In practice, however, we shall only use these operations for the sake of notational concision and to highlight compositional patterns that could otherwise be obscured.\newline

The singled-out values corresponding to the additive and multiplicative identities of the semiring $(V,+,\cdot)$ allow to define special arrays: an \textbf{empty array} or \textbf{zero array} is a constant array with value $0\in V$ and a \textbf{full array} or \textbf{Hadamard array} is a constant array with value $1\in V$, by slight abuse of notation we may denote them by $0$ and $1$ respectively if the context does not lead to confusion. Consider a single index set $I\in \mathcal{U}$, $n$-arrays of the form
\begin{equation*}
    a:I\times I \times \stackrel{n}{\cdots} \times I \to V
\end{equation*}
are called \textbf{regular arrays}. The regular empty and full $n$-arrays are denoted $0_n$ and $1_n$ respectively. Exploiting the fact that all indices of a regular array are identical, we define the \textbf{identity array} or \textbf{Kronecker array} $\delta_n$ as follows:
\begin{equation*}
    \delta_n(i_1,i_2,\dots,i_n)=
    \begin{cases}
    1 \quad \text{if} \quad i_1=i_2=\cdots=i_n\\
    0 \quad \text{otherwise}.
    \end{cases}
\end{equation*}

The semiring binary operations on the value set $(V,+,\cdot)$ induce array operations of varying degree of compatibility with the index structure of arrays. The most strictly compatible operations are \textbf{entry-wise addition $+$} and \textbf{entry-wise multiplication $\cdot$} defined in the obvious way as binary operations of arrays over the same constellation of indices. The zero and Hadamard arrays act as the additive and multiplicative neutral elements, respectively, with respect to these operations. More generally, given a collection of incident arrays
\begin{equation*}
    a_1,a_2,\dots,a_N
\end{equation*}
over constellations of indices
\begin{equation*}
    \{I_1,\dots, P, \dots,I_{n_1}\}, \{J_1,\dots, P, \dots, J_{n_2}\}, \dots, \{K_1,\dots, P, \dots, K_{n_N}\}
\end{equation*}
all sharing at least one common incident index $P$, an \textbf{additive incidence} is a new array whose entries are given by:
\begin{equation*}
    a_1(i_1,\dots, l, \dots,i_{n_1}) + a_2(j_1, \dots, l, \dots, j_{n_2}) + \cdots + a_N(k_1,\dots, l, \dots, k_{n_N}).
\end{equation*}
Similarly, a \textbf{multiplicative incidence} is an array whose entries are:
\begin{equation*}
    a_1(i_1,\dots, l, \dots,i_{n_1}) \cdot a_2(j_1, \dots, l, \dots, j_{n_2}) \cdot \cdots \cdot a_N(k_1,\dots, l, \dots, k_{n_N}).
\end{equation*}
The order of the array resulting from either incidence operation is:
\begin{equation*}
    \sum_{s=1}^Nn_s - N + 1.
\end{equation*}
Unless otherwise specified, an \textbf{incidence} of arrays will refer to a multiplicative incidence as defined above. The number of incident arrays $N$ is called the \textbf{arity} of the incidence. Unary incidences, that is, when only a single array is considered, i.e. $N=1$, correspond to the identity operation on arrays.\newline

Despite there being multiple ways to define array operations that combine semiring addition and multiplication, we shall restrict our attention to the operations that directly generalize binary relation composition and matrix multiplication. Consider two binary relations $a\subset X\times Y$ and $b\subset Y\times Z$, then the standard definition of composition of relations $c= b\circ a$ via the existence of intermediary elements can be formally expressed in terms of the operations of the Boolean algebra $(\mathbb{B},\vee,\wedge)$ as follows:
\begin{equation*}
    c_{xz}:= \bigvee_{y\in Y} a_{xy} \wedge b_{yz}.
\end{equation*}
Similarly, in the case of matrices over some ring $(R,+,\cdot)$, given a matrix $a_{ij}$ over indices $I\times J$ and a matrix $b_{jk}$ over indices $J\times K$, matrix multiplication is defined by the formula:
\begin{equation*}
    c_{ik}:=\sum_{j\in J} a_{ij}\cdot b_{jk}.
\end{equation*}
When relations and matrices are regarded as arrays over a semiring, the above formulas are formally identical. This operation for a pair of $2$-arrays, commonly called a contraction of indices in the physics literature \cite{wald2010general}, suggests the definition of the following more general operation: given a collection arrays
\begin{equation*}
    a_1,a_2,\dots,a_N
\end{equation*}
over constellations of indices
\begin{equation*}
    \{I_1,\dots, P, \dots,I_{n_1}\}, \{J_1,\dots, P, \dots, J_{n_2}\}, \dots, \{K_1,\dots, P, \dots, K_{n_N}\}
\end{equation*}
all sharing at least one common index $P$, a \textbf{contraction} is a new array whose entries are given by:
\begin{equation*}
    \sum_{p\in P} a_1(i_1,\dots, p, \dots,i_{n_1}) \cdot a_2(j_1, \dots, p, \dots, j_{n_2}) \cdot \cdots \cdot a_N(k_1,\dots, p, \dots, k_{n_N}).
\end{equation*}
The order of the array resulting from the contraction is:
\begin{equation*}
    \sum_{s=1}^Nn_s - N.
\end{equation*}
Similarly to incidences, we shall call the number arrays $N$ the \textbf{arity} of the contraction. Note that our definition of contraction generalizes the standard pair-wise contraction of tensor components commonly used in multilinear algebra \cite{wald2010general,abraham2012manifolds} and tensor networks \cite{biamonte2017tensor, okunishi2021developments}, and the pair-wise composition of relations \cite{burch1991peircean,behrisch2013relational}, by allowing for several arrays to be simultaneously contracted over a single common index. In particular, just as pair-wise contraction of indices is intimately connected to the notion of duality, i.e. distinctions of covariant and contravariant indices, manifesting at the level of $1$-arrays as the usual dot product of vectors
\begin{equation*}
    \sum_{p\in I} v_p \cdot w_p,
\end{equation*}
a ternary contraction of indices corresponds to the generally poorly understood notion of triality \cite{baez2001triality}, manifesting at the level of $1$-arrays as the ternary dot product of vectors
\begin{equation*}
    \sum_{p\in I} v_p \cdot w_p \cdot u_p.
\end{equation*}
Furthermore, our definition allows for unary contractions, i.e. $N=1$, where a single index of a $n$-array $a$ is summed over
\begin{equation*}
    \sum_{p\in I_k} a(i_1,\dots,i_{k-1}, p,i_{k+1} \dots, i_n)
\end{equation*}
to give a $(n-1)$-array. Unary contractions for $\mathbb{B}$-valued $2$-arrays correspond to the standard projections of binary relations, particularly, the domain and image of functions between sets. An obvious but important observation is that a general $N$-ary contraction of arrays reduces to a (multiplicative) $N$-ary incidence and a unary contraction of the incident index. Conceptually, the difference between an incidence and a contraction is whether the information of common indices is preserved or collapsed. This is clearly illustrated in the case of binary relations sharing a common set: given two binary relations $a\subset X\times Y$ and $b\subset Y\times Z$, the incidence and contraction operations define the relations
\begin{equation*}
    i_{xyz}:= a_{xy} \wedge b_{yz} \qquad \text{and} \qquad c_{xz}:= \bigvee_{y\in Y} a_{xy} \wedge b_{yz}.
\end{equation*}
respectively. The relation resulting from the incidence operation is a ternary relation $i\subset X \times Y \times Z$ collecting all the trios $(x,y,z)$ such that $y$ is an intermediary element between $x$ and $z$, whereas the relation resulting from the contraction is a binary relation $c\subset X\times Z$ collecting the pairs $(x,z)$ for which there exists at least one intermediary element between $x$ and $z$.\newline

Our definitions of array incidence and contraction may be slightly generalized to account for standard matrix operations such as traces or tensor products. We define \textbf{self-incidences} and \textbf{self-contractions} by allowing more than one index in each input array to be operated simultaneously in a single index incidence or contraction. A trace is then a two-index self contraction. The \textbf{tensor product} of arrays is simply defined by parallel multiplication, i.e. an incidence product with no common indices. For instance, the tensor product of a $1$-array $v$, a $2$-array $m$ and a $3$-array $a$ results in a $6$-array $t$ as follows:
\begin{equation*}
    t(i,j,k,l,n,m):=v(i)\cdot m(j,k)\cdot a(l,n,m).
\end{equation*}

Our choice of terminology for identity arrays is vindicated by their behaviour with respect to array operations. Firstly, identity arrays form incidence and contraction subalgebras, i.e. any incidence or contraction of several identity arrays gives an identity array of the appropriate order, and they act as neutral elements in contractions with generic arrays. To illustrate these properties we note:
\begin{align*}
    \delta_2(i,j) \cdot \delta_2(j,k) &=\delta_3(i,j,k)\\
    \sum_{p\in I} \delta_3(i,j,p)\cdot \delta_3(p,k,l) &= \delta_4(i,j,k,l)
\end{align*}
Secondly, identity arrays act as neutral elements with respect to contraction. For example, given an arbitrary $n$-array $a$, it follows that
\begin{equation*}
    \sum_{p\in I} \delta_2(j,p)\cdot a(i_1, \dots,p,\dots,i_n)=a(i_1, \dots,j,\dots,i_n).
\end{equation*}
Furthermore, contractions with identity arrays recover self-contractions:
\begin{equation*}
    \sum_{p\in I} \sum_{q\in I} \delta_2(p,q)\cdot a(i_1, \dots,p,\dots, q,\dots,i_n)= \sum_{p\in I} a(i_1, \dots,p,\dots, p,\dots,i_n).
\end{equation*}
Identity arrays define higher order \textbf{diagonal extensions} of arbitrary arrays via incidence or contraction. For example, given a $1$-array $v$, regarded as a component vector, it is easy to see that the $2$-array resulting from incidence with the identity $2$-array
\begin{equation*}
    m(i,j):=v(i) \cdot \delta_2(i,j)
\end{equation*}
corresponds to the components of a square diagonal matrix $m$ with diagonal entries given by the components of $v$. The array $m$ is called a diagonal extension of the array $v$. Unary contractions reverse diagonal extensions, as seen in the example above:
\begin{equation*}
    v(i)=\sum_{p\in I} v(p) \cdot \delta_2(p,i) = \sum_{p\in I} v(i) \cdot \delta_2(i,p).
\end{equation*}
Equivalently, the diagonal extension $m$ can be defined via contraction with the identity $3$-array
\begin{equation*}
    m(i,j)=\sum_{p} v(p) \cdot \delta_3(i,j,p)
\end{equation*}
which can be similarly reverted by unary contractions to give back the $1$-array $v$. Combining simultaneous contraction and incidence operations with identity arrays recovers self-incidences: for a $3$-array $a$ on indices $I\times I \times J$ we can readily check:
\begin{equation*}
    a(i,i,j)=\sum_{p\in I} \delta_2(i,p)\cdot a(i,p,k).
\end{equation*}

The distributivity property of the value semiring $(V,+,\cdot)$ ensures that more general operations involving multiple simultaneous incidences and contractions between several arrays may be constructed. Any well-defined procedure that outputs a single array from a given collection of arrays is called an array \textbf{multiplication} and the number of arrays involved in the multiplication is called its \textbf{arity}. Keeping track of the index structure of general multiplications quickly becomes a notational challenge as the order of arrays increases. The study of higher order array operations calls for a specific notational framework that can enable the systematic treatment of conformability conditions between arrays and the positions of common indices.

\section{The Plex Formalism} \label{plex}

In this section we shall develop the graphical notation that allows us to visualize array algebra operations. Our general approach is motivated by higher order generalizations of the notion of category via what we call `plex algebra'. Although the scope of the present work will only require a very limited use of plex algebra, we shall briefly outline the general ideas here and leave their concrete application to the study of associativity and semiheaps for later sections.\newline

We start by articulating the central notion of `plex'. Consider the array algebra setting presented at the start of Section \ref{array} where we take a universe of index sets $\mathcal{U}$ and a value semiring $(V,+,\cdot)$. Given a constellation of indices $I_1,\dots,I_n\in\mathcal{U}$, a \textbf{plex of order $n$} or \textbf{$n$-plex} is a formal assignment of values to each collection of $n$ separate index elements $\{i_1,\dots,i_n\}$. By using the braiding isomorphisms of the symmetric monoidal structure on the universe of index sets a plex can be defined as an equivalence class of an array under reorderings. Accordingly, `plex' is a slightly more elementary notion than `array', as it encodes the same entry-value assignment but without the information of the particular sequential order of its indices. Under this interpretation, plexes could also be called pre-sequential arrays. The choice of a representative array in a reordering equivalence class is called a \textbf{sequentialization} of a plex. Evidently, there are $|S_n|$ generically distinct sequentializations of a $n$-plex.\newline

The main motivation to de-emphasize array sequentiality is notational parsimony when describing general index operations. Our goal is to establish a diagrammatic notation for array multiplication that faithfully captures the information of \textbf{conformability}, i.e. minimal conditions of existence of common indices among the arrays being operated, and \textbf{topology}, i.e. the precise locations of the common indices. The standard set-theoretic notion of hypergraph \cite{bretto2013hypergraph} suggests a clear way to proceed: index sets are represented by vertices and $n$-plexes are represented by hyperedges on $n$ vertices. Conformability data is captured in a hypergraph and operations are specified by labels on the vertices and hyperedges of the hypergraph. Such representations are called \textbf{plex diagrams}. Let us articulate the details of this diagrammatic formalism.\newline

A plex is graphically depicted by a polytope whose the interior represents the entry-value assignment, typically with an alphabetic label that identifies the plex, and whose vertices represent the indices. Here are some examples of low order plexes:
\begin{center}
\begin{tikzpicture}[line join = round, line cap = round]

\begin{scope}[xshift=0, yshift=0]

\coordinate [label=left:$i$] (0) at (0.5,0.5);

\filldraw [lightgray] (0) circle (2pt);

\node[] at (0.7,0.7) {$v$};

\node[] at (0.65,-1.5) {$1$-plex};

\end{scope}

\begin{scope}[xshift=80, yshift=0]

\coordinate [label=right:$j$] (1) at (1.5,0.5);
\coordinate [label=left:$i$] (0) at (0.5,0.5);

\draw[opacity=0.5, lightgray, ultra thick] (1)--(0);

\node[] at (1,0.8) {$m$};

\node[] at (0.9,-1.5) {$2$-plex};

\end{scope}

\begin{scope}[xshift=200, yshift=0]

\coordinate [label=above:$k$] (2) at (0.5,{0.5*sqrt(3)});
\coordinate [label=right:$j$] (1) at (1,0);
\coordinate [label=left:$i$] (0) at (0,0);

\draw[lightgray, opacity=0.5,fill=lightgray,fill opacity=0.5] (1)--(0)--(2)--cycle;

\node[] at (0.5,0.3) {$a$};

\node[] at (0.45,-1.5) {$3$-plex};

\end{scope}

\begin{scope}[xshift=330, yshift=0]

\coordinate [label=above:$l$] (3) at (0,{sqrt(2)},0);
\coordinate [label=left:$i$] (2) at ({-.5*sqrt(3)},0,-.5);
\coordinate [label=below:$j$] (1) at (0,0,1);
\coordinate [label=right:$k$] (0) at ({.5*sqrt(3)},0,-.5);

\draw[densely dotted,postaction={decorate}] (0)--(2);
\draw[fill=lightgray,fill opacity=.5] (1)--(0)--(3)--cycle;
\draw[fill=gray,fill opacity=.5] (2)--(1)--(3)--cycle;
\draw[gray, opacity=0.5, postaction={decorate}] (1)--(0);
\draw[gray, opacity=0.5, postaction={decorate}] (1)--(2);
\draw[gray, opacity=0.5, postaction={decorate}] (2)--(3);
\draw[gray, opacity=0.5, postaction={decorate}] (1)--(3);
\draw[gray, opacity=0.5, postaction={decorate}] (0)--(3);

\node[] at (0.2,0.5) {$c$};

\node[] at (0.1,-1.5) {$4$-plex};

\end{scope}

\end{tikzpicture}
\end{center}
Note that only interiors and vertices of polytopes represent relevant information in plex diagrams, highlighted edges or faces, like in the $4$-plex above, will only be used as visual aid. When working with low order plexes ($n\leq 3$) we use polytope representations that correspond to topological simplices, as shown in the four examples above where a $n$-plex is graphically depicted as a $(n-1)$-simplex. Due to the inherent limitations of $2$- and $3$-dimensional graphics, non-simplicial representations are preferred for plexes of order $4$ and higher. For instance, a $4$-plex $c$ can be represented in two equivalent ways:
\begin{center}
\begin{tikzpicture}[line join = round, line cap = round]

\begin{scope}[xshift=0, yshift=0]

\coordinate [label=above:$l$] (3) at (0,{sqrt(2)},0);
\coordinate [label=left:$i$] (2) at ({-.5*sqrt(3)},0,-.5);
\coordinate [label=below:$j$] (1) at (0,0,1);
\coordinate [label=right:$k$] (0) at ({.5*sqrt(3)},0,-.5);

\draw[densely dotted,postaction={decorate}] (0)--(2);
\draw[fill=lightgray,fill opacity=.5] (1)--(0)--(3)--cycle;
\draw[fill=gray,fill opacity=.5] (2)--(1)--(3)--cycle;
\draw[gray, opacity=0.5, postaction={decorate}] (1)--(0);
\draw[gray, opacity=0.5, postaction={decorate}] (1)--(2);
\draw[gray, opacity=0.5, postaction={decorate}] (2)--(3);
\draw[gray, opacity=0.5, postaction={decorate}] (1)--(3);
\draw[gray, opacity=0.5, postaction={decorate}] (0)--(3);

\node[] at (0.2,0.5) {$c$};

\end{scope}

\node[] at (2.4,0.5) {$\equiv$};

\begin{scope}[xshift=100, yshift=0]

\coordinate [label=above left:$l$] (3) at (0,1.2);
\coordinate [label=above right:$i$] (2) at (1.2,1.2);
\coordinate [label=below right:$j$] (1) at (1.2,0);
\coordinate [label=below left:$k$] (0) at (0,0);

\draw[lightgray, opacity=0.5, fill=lightgray,fill opacity=.5] (3)--(2)--(1)--(0)--cycle;

\node[] at (0.6,0.55) {$c$};

\end{scope}

\end{tikzpicture}
\end{center}
Labels will be used flexibly: they will often be omitted and, depending on the particular emphasis of the plex diagram, vertices may indistinctly carry index element labels or index set labels.\newline

Index manipulations can be conveniently expressed by graphically altering plexes. Order-decreasing operations such as \textbf{flattening} and \textbf{slicing} are expressed by collapsing a plex along some of its vertices or edges, respectively, and order-increasing operations such as \textbf{broadening} are expressed by inscribing a plex onto a \emph{ghost} plex of a larger order. Here is a diagrammatic example of a $3$-plex flattening into a $2$-plex and a $2$-plex broadened into a $3$-plex:
\begin{center}
\begin{tikzpicture}[line join = round, line cap = round]

\begin{scope}[xshift=0, yshift=0]

\coordinate [] (2) at (0.5,{0.5*sqrt(3)});
\coordinate [] (1) at (1,0);
\coordinate [] (0) at (0,0);

\draw[lightgray, opacity=0.5, fill=lightgray, fill opacity=0.5] (1)--(0)--(2)--cycle;

\end{scope}

\begin{scope}[xshift=70, yshift=0]

\coordinate [] (2) at (0.5,{0.5*sqrt(3)});
\coordinate [] (1) at (1,0);
\coordinate [] (0) at (0,0);

\draw[opacity=0.5, lightgray, ultra thick] (2)--(0);

\end{scope}

\begin{scope}[xshift=160, yshift=0]

\coordinate [] (2) at (0.5,{0.5*sqrt(3)});
\coordinate [] (1) at (1,0);
\coordinate [] (0) at (0,0);

\draw[opacity=0.5, lightgray, ultra thick] (2)--(0);

\end{scope}

\begin{scope}[xshift=220, yshift=0]

\coordinate [] (2) at (0.5,{0.5*sqrt(3)});
\coordinate [] (1) at (1,0);
\coordinate [] (0) at (0,0);

\draw[lightgray, dashed] (1)--(0)--(2)--cycle;
\draw[opacity=0.5, lightgray, ultra thick] (2)--(0);

\end{scope}

\node[] at (1.75,0.3) {$\mapsto$};
\node[] at (7.05,0.3) {$\mapsto$};

\end{tikzpicture}
\end{center}

The array operations introduced in Section \ref{array} generalize to plexes in a natural way via quotienting by reordering isomorphisms. An operation defined on plexes as the pre-sequential manifestation of some array multiplication will be called a \textbf{plex product} and the number of input plexes will be called the \textbf{arity} of the plex product. A plex product that is order-preserving, i.e. $n$-plex inputs give a $n$-plex output, will sometimes be called a \textbf{plex composition}. For the remainder of this section we show how plex diagrams capture plex products.\newline

We begin by considering entry-wise operations, which can be simply denoted at the level of labels. For example, \textbf{entry-wise addition} of $2$-arrays can be written as
\begin{center}
\begin{tikzpicture}[line join = round, line cap = round]

\coordinate [label=below left:$j$] (1) at (0,0);
\coordinate [label=above right:$i$] (0) at (1,1);
\coordinate [label=below left:$j$] (2) at (2,0);
\coordinate [label=above right:$i$] (3) at (3,1);
\coordinate [label=below left:$j$] (4) at (5,0);
\coordinate [label=above right:$i$] (5) at (6,1);

\draw[opacity=0.5, lightgray, ultra thick] (1)--(0);
\draw[opacity=0.5, lightgray, ultra thick] (2)--(3);

\node[] at (1,0.4) {$m$};

\node[] at (3,0.4) {$n$};

\node[] at (4.2,0.4) {$\mapsto$};

\draw[opacity=0.5, lightgray, ultra thick] (4)--(5);

\node[] at (6.5,0.4) {$m+n$};

\end{tikzpicture}
\end{center}
similarly, \textbf{entry-wise multiplication} of $3$-arrays can be written as
\begin{center}
\begin{tikzpicture}[line join = round, line cap = round]

\coordinate [label=above:$k$] (2) at (0.5,{0.5*sqrt(3)});
\coordinate [label=right:$j$] (1) at (1,0);
\coordinate [label=left:$i$] (0) at (0,0);

\coordinate [label=above:$k$] (5) at (3,{0.5*sqrt(3)});
\coordinate [label=right:$j$] (4) at (3.5,0);
\coordinate [label=left:$i$] (3) at (2.5,0);

\coordinate [label=above:$k$] (8) at (6+0.3,{0.5*sqrt(3)});
\coordinate [label=right:$j$] (7) at (6.5+0.3,0);
\coordinate [label=left:$i$] (6) at (5.5+0.3,0);

\draw[lightgray, opacity=0.5, fill=lightgray, fill opacity=0.5] (1)--(0)--(2)--cycle;
\draw[lightgray, opacity=0.5, fill=lightgray, fill opacity=0.5] (4)--(3)--(5)--cycle;
\draw[lightgray, opacity=0.5, fill=lightgray, fill opacity=0.5] (7)--(6)--(8)--cycle;

\node[] at (0.5,0.3) {$a$};

\node[] at (3,0.3) {$b$};

\node[] at (4.7,0.3) {$\mapsto$};

\node[] at (5.97+0.3,0.3) {$a \! \cdot \! b$};

\node[] at (8.5,0.3) { };

\end{tikzpicture}
\end{center}
The main use case for plex diagrams are general array multiplications which may involve several simultaneous incidences and contractions. The notational challenge that we face is to faithfully represent common indices among a given collection of plexes or arrays. Our approach is to portray index incidence data as the adjacency data of a hypergraph whose hyperedges represent plexes and whose vertices represent indices. Diagrammatically, we shall render hypergraphs as collections of juxtaposed polytopes where bare vertices, i.e. not carrying any graphical markings, shall represent distinct indices and polytope interiors shall represent plexes. Following this recipe, the \textbf{incidence} operation is represented simply by attaching plexes by their common indices. For instance, given a $2$-plex $m$ and a $3$-plex $a$ with one index in common their incidence product is written as
\begin{center}
\begin{tikzpicture}[line join = round, line cap = round]

\coordinate [label=below left:$i$] (1) at (0,0);
\coordinate [label=above right:$j$] (0) at (1,1);

\coordinate [label=above:$l$] (4) at (3,{0.5*sqrt(3)});
\coordinate [label=right:$k$] (3) at (3.5,0);
\coordinate [label=left:$j$] (2) at (2.5,0);

\coordinate [label=above:$l$] (8) at (7.366,0.8);
\coordinate [label=right:$k$] (7) at (7.366,-0.2);
\coordinate [label=below:$j$] (6) at (6.5,0.3);
\coordinate [label=left:$i$] (5) at (5.6,0.3);

\draw[lightgray, opacity=0.5, ultra thick] (1)--(0);

\node[] at (0.8,0.35) {$m$};

\draw[lightgray, opacity=0.5, fill=lightgray,fill opacity=0.5] (2)--(3)--(4)--cycle;

\node[] at (3,0.3) {$a$};

\node[] at (4.7,0.3) {$\mapsto$};

\draw[lightgray, opacity=0.5, ultra thick] (5)--(6);
\draw[lightgray, opacity=0.5, fill=lightgray,fill opacity=0.5] (7)--(6)--(8)--cycle;

\node[] at (6,0.5) {$m$};
\node[] at (7.05,0.3) {$a$};

\end{tikzpicture}
\end{center}
the output is a $4$-plex, as explicitly indicated by the four vertex labels. To recover an incidence array operation from this plex diagram we must choose sequentializations for the plexes $m$ and $a$. By setting $m_{ij}$ as a $2$-array and $a_{jlk}$ as a $3$-array we have the following equivalence:
\begin{center}
\begin{tikzpicture}[line join = round, line cap = round]

\coordinate [label=above:$l$] (4) at (1.366,0.8);
\coordinate [label=right:$k$] (3) at (1.366,-0.2);
\coordinate [label=below:$j$] (2) at (0.5,0.3);
\coordinate [label=left:$i$] (1) at (-0.3,0.3);

\draw[lightgray, opacity=0.5, ultra thick] (1)--(2);
\draw[lightgray, opacity=0.5, fill=lightgray,fill opacity=0.5] (2)--(3)--(4)--cycle;

\node[] at (0.11,0.5) {$m$};
\node[] at (1.05,0.3) {$a$};

\node[] at (3,0.3) {$=$};

\node[] at (4.6,0.26) {$m_{ij}\cdot a_{jlk}$};

\end{tikzpicture}
\end{center}
If no sequentializations are specified, the plex diagram alone encapsulates as many sequentially inequivalent array operations as there are index permutations of each array involved in the operation
\begin{center}
\begin{tikzpicture}[line join = round, line cap = round]

\coordinate [label=above:$l$] (4) at (1.366,0.8);
\coordinate [label=right:$k$] (3) at (1.366,-0.2);
\coordinate [label=below:$j$] (2) at (0.5,0.3);
\coordinate [label=left:$i$] (1) at (-0.3,0.3);

\draw[lightgray, opacity=0.5, ultra thick] (1)--(2);
\draw[lightgray, opacity=0.5, fill=lightgray,fill opacity=0.5] (2)--(3)--(4)--cycle;

\node[] at (0.15,0.5) {$m$};
\node[] at (1.05,0.3) {$a$};

\node[] at (2.3,0.3) {$=$};

\node[] at (7,0.3) {$\{m_{ij}\cdot a_{jlk} \,,\, m_{ji}\cdot a_{jlk} \,,\, m_{ij}\cdot a_{jkl} \,,\, m_{ij}\cdot a_{ljk} \,,\, \dots\}$};

\end{tikzpicture}
\end{center}
The $12=|S_2|\cdot |S_3|$ sequentially inequivalent array operations in brackets above all share two features: conformability, i.e. the fact that there is one common index between $m$ and $a$, and topology, i.e. the fact that a single index of $m$ is incident with a single index of $a$; this is precisely the data that plex diagrams aim to encode. Here are some examples of plex diagrams representing multiple incidence operations together with their corresponding array expressions in a particular sequentialization, and the total order of the resulting plex:
\begin{center}
\begin{tikzpicture}[line join = round, line cap = round]

\begin{scope}[xshift=0, yshift=0]

\coordinate [label=above right:$u$] (3) at ({0.5*sqrt(3)},-0.5);
\coordinate [label=below right:$n$] (2) at (-{0.5*sqrt(3)},-0.5);
\coordinate [label=below left:$m$] (1) at (0,1);
\coordinate [] (0) at (0,0);

\draw[opacity=0.5, lightgray, ultra thick] (1)--(0);
\draw[opacity=0.5, lightgray, ultra thick] (2)--(0);
\draw[opacity=0.5, lightgray, ultra thick] (3)--(0);

\node[] at (0,-1.5) {$m_{il}\cdot n_{jl}\cdot u_{kl}$};
\node[] at (0,-2.3) {$4$-plex};

\end{scope}

\begin{scope}[xshift=100, yshift=8]

\coordinate [] (3) at (-1,0.5);
\coordinate [] (2) at (1,0.5);
\coordinate [] (1) at (0,-1);
\coordinate [] (0) at (0,0);

\draw[gray, opacity=0.5,fill=lightgray,fill opacity=0.5] (1)--(0)--(2)--cycle;
\draw[gray, opacity=0.5,fill=lightgray,fill opacity=0.5] (1)--(0)--(3)--cycle;
\draw[opacity=0.5, lightgray, ultra thick] (3)--(2);

\node[] at (0,0.7) {$m$};
\node[] at (-0.31,-0.15) {$a$};
\node[] at (0.33,-0.1) {$b$};

\node[] at (0,-1.75) {$m_{ij}\cdot a_{ipq}\cdot b_{pqj}$};
\node[] at (0,-2.56) {$4$-plex};

\end{scope}

\begin{scope}[xshift=230, yshift=0]

\coordinate [] (4) at (1.8,0.4);
\coordinate [] (5) at (2,-0.7);
\coordinate [] (3) at ({0.5*sqrt(3)},-0.5);
\coordinate [] (2) at (-{0.5*sqrt(3)},-0.5);
\coordinate [] (1) at (0,1);
\coordinate [] (0) at (0,0);

\draw[gray, opacity=0.5,fill=lightgray,fill opacity=0.5] (1)--(0)--(2)--cycle;
\draw[gray, opacity=0.5,fill=lightgray,fill opacity=0.5] (2)--(0)--(3)--cycle;
\draw[gray, opacity=0.5,fill=lightgray,fill opacity=0.5] (1)--(0)--(3)--cycle;

\draw[opacity=0.5, lightgray, ultra thick] (1) to[out=210, in=90] (2);
\draw[opacity=0.5, lightgray, ultra thick] (3)--(4);
\draw[opacity=0.5, lightgray, ultra thick] (3)--(5);

\node[] at (-0.22,0.17) {$a$};
\node[] at (0.2,0.2) {$b$};
\node[] at (0,-0.3) {$c$};
\node[] at (1.15,0.2) {$m$};
\node[] at (1.6,-0.4) {$n$};
\node[] at (-0.9,0.5) {$u$};

\node[] at (0,-1.5) {$u_{ij}\cdot a_{ijl}\cdot b_{ilk} \cdot c_{jlk} \cdot m_{kp} \cdot n_{kq}$};
\node[] at (0.1,-2.3) {$6$-plex};

\end{scope}

\end{tikzpicture}
\end{center}
Note that index labels at the vertices could be omitted without loss of information, this will always be the case as long as plex diagrams depict simple hypergraphs, i.e. when no two distinct hyperedges are allowed to be defined by the same set of vertices. Unless otherwise indicated, we shall always consider plex diagrams of this kind.\newline

Following our observation in Section \ref{array} that a general array contraction can be specified by an incidence followed by a unary contraction on the incident index, a \textbf{contraction} of plexes is depicted simply by adding a marking on the incident index to be contracted. For example, the contraction product of a $2$-plex $m$ and a $3$-plex $a$ with one index in common is written as
\begin{center}
\begin{tikzpicture}[line join = round, line cap = round]

\coordinate [label=below left:$i$] (1) at (0,0);
\coordinate [label=above right:$j$] (0) at (1,1);

\coordinate [label=above:$l$] (4) at (3,{0.5*sqrt(3)});
\coordinate [label=right:$k$] (3) at (3.5,0);
\coordinate [label=left:$j$] (2) at (2.5,0);

\coordinate [label=above:$l$] (8) at (7.366,0.8);
\coordinate [label=right:$k$] (7) at (7.366,-0.2);
\coordinate [label=below:$j$] (6) at (6.5,0.3);
\coordinate [label=left:$i$] (5) at (5.6,0.3);

\draw[lightgray, opacity=0.5, ultra thick] (1)--(0);

\node[] at (0.8,0.35) {$m$};

\draw[lightgray, opacity=0.5, fill=lightgray,fill opacity=0.5] (2)--(3)--(4)--cycle;

\node[] at (3,0.3) {$a$};

\node[] at (4.7,0.3) {$\mapsto$};

\draw[lightgray, opacity=0.5, ultra thick] (5)--(6);
\draw[lightgray, opacity=0.5, fill=lightgray,fill opacity=0.5] (7)--(6)--(8)--cycle;

\filldraw[darkgray] (6) circle (0.07);

\node[] at (6,0.5) {$m$};
\node[] at (7.05,0.3) {$a$};

\end{tikzpicture}
\end{center}
and by choosing a sequentialization we get
\begin{center}
\begin{tikzpicture}[line join = round, line cap = round]

\coordinate [label=above:$l$] (4) at (1.366,0.8);
\coordinate [label=right:$k$] (3) at (1.366,-0.2);
\coordinate [label=below:$j$] (2) at (0.5,0.3);
\coordinate [label=left:$i$] (1) at (-0.3,0.3);

\draw[lightgray, opacity=0.5, ultra thick] (1)--(2);
\draw[lightgray, opacity=0.5, fill=lightgray,fill opacity=0.5] (2)--(3)--(4)--cycle;

\filldraw[darkgray] (2) circle (0.07);

\node[] at (0.11,0.5) {$m$};
\node[] at (1.05,0.3) {$a$};

\node[] at (2.7,0.3) {$=$};

\node[] at (4.6,0.19) {$\displaystyle \sum_j m_{ij}\cdot a_{jlk}$};

\end{tikzpicture}
\end{center}
The plex diagram of the contraction product encodes all the sequentially inequivalent array contractions resulting from the different choices of sequentializations of $m$ and $a$. Contraction, indicated by the dot marking on a vertex, effectively removes an index in a plex diagram. This contrasts with incidence which preserves all indices of the input plexes. This can be clearly illustrated by the following diagrammatic equations for $2$-plex products:
\begin{center}
\begin{tikzpicture}[line join = round, line cap = round]

\begin{scope}[xshift=0, yshift=0]

\coordinate [] (2) at (0.5,{0.5*sqrt(3)});
\coordinate [] (1) at (1,0);
\coordinate [] (0) at (0,0);

\draw[lightgray, opacity=0.5, ultra thick] (0)--(2);
\draw[lightgray, opacity=0.5, ultra thick] (2)--(1);

\filldraw[darkgray] (2) circle (0.07);

\end{scope}

\begin{scope}[xshift=50, yshift=0]

\node[] at (0,0.3) {$=$};

\end{scope}

\begin{scope}[xshift=70, yshift=0]

\coordinate [] (2) at (0.5,{0.5*sqrt(3)});
\coordinate [] (1) at (1,0);
\coordinate [] (0) at (0,0);

\draw[lightgray, opacity=0.5, ultra thick] (0)--(1);

\end{scope}

\begin{scope}[xshift=190, yshift=0]

\coordinate [] (2) at (0.5,{0.5*sqrt(3)});
\coordinate [] (1) at (1,0);
\coordinate [] (0) at (0,0);

\draw[lightgray, opacity=0.5, ultra thick] (0)--(2);
\draw[lightgray, opacity=0.5, ultra thick] (2)--(1);

\end{scope}

\begin{scope}[xshift=240, yshift=0]

\node[] at (0,0.3) {$=$};

\end{scope}

\begin{scope}[xshift=260, yshift=0]

\coordinate [] (2) at (0.5,{0.5*sqrt(3)});
\coordinate [] (1) at (1,0);
\coordinate [] (0) at (0,0);

\draw[lightgray, opacity=0.5,fill=lightgray,fill opacity=0.5] (1)--(0)--(2)--cycle;

\end{scope}

\end{tikzpicture}
\end{center}
Unary contractions are simply denoted by a plex with a marked vertex, as illustrated by the following diagrammatic equations:
\begin{center}
\begin{tikzpicture}[line join = round, line cap = round]

\begin{scope}[xshift=0, yshift=0]

\coordinate [] (1) at (1,0.3);
\coordinate [] (0) at (0,0.3);

\draw[lightgray, opacity=0.5, ultra thick] (0)--(1);

\filldraw[darkgray] (1) circle (0.07);

\end{scope}

\begin{scope}[xshift=50, yshift=0]

\node[] at (0,0.3) {$=$};

\end{scope}

\begin{scope}[xshift=70, yshift=0]

\coordinate [] (0) at (0,0.3);

\filldraw[lightgray, opacity=0.5] (0) circle (0.09);

\end{scope}

\begin{scope}[xshift=190, yshift=0]

\coordinate [] (2) at (0.5,{0.5*sqrt(3)});
\coordinate [] (1) at (1,0);
\coordinate [] (0) at (0,0);

\draw[lightgray, opacity=0.5,fill=lightgray,fill opacity=0.5] (1)--(0)--(2)--cycle;

\filldraw[darkgray] (1) circle (0.07);

\end{scope}

\begin{scope}[xshift=240, yshift=0]

\node[] at (0,0.3) {$=$};

\end{scope}

\begin{scope}[xshift=260, yshift=0]

\coordinate [] (2) at (0.5,{0.5*sqrt(3)});
\coordinate [] (1) at (1,0);
\coordinate [] (0) at (0,0);

\draw[lightgray, opacity=0.5, ultra thick] (0)--(2);

\end{scope}

\end{tikzpicture}
\end{center}

A general plex product is thus uniquely specified by a plex diagram corresponding to a simple connected hypergraph with vertex markings. The order of the resulting plex is easy to assess at a glance since the indices correspond to the unmarked vertices i.e. unaltered and incident indices, which we call the \textbf{free indices} of the plex diagram. Sometimes it will also be useful to distinguish between the indices that are operated in some way, i.e. incident or contracted, from the unaltered ones, the former are called \textbf{internal indices} and the latter are called \textbf{external indices}. Here are some plex diagrams representing general plex products together with their corresponding array expressions in a particular sequentialization, and the total order of the resulting plex:
\begin{center}
\begin{tikzpicture}[line join = round, line cap = round]

\begin{scope}[xshift=0, yshift=0]

\coordinate [] (3) at ({0.5*sqrt(3)},-0.5);
\coordinate [] (2) at (-{0.5*sqrt(3)},-0.5);
\coordinate [] (1) at (0,1);
\coordinate [] (0) at (0,0);

\draw[gray, opacity=0.5,fill=lightgray,fill opacity=0.5] (1)--(0)--(2)--cycle;
\draw[gray, opacity=0.5,fill=lightgray,fill opacity=0.5] (2)--(0)--(3)--cycle;
\draw[gray, opacity=0.5,fill=lightgray,fill opacity=0.5] (1)--(0)--(3)--cycle;

\filldraw[darkgray] (0) circle (0.07);

\node[] at (-0.22,0.17) {$a$};
\node[] at (0.2,0.2) {$b$};
\node[] at (0,-0.3) {$c$};

\node[] at (0,-1.8) {$\displaystyle \sum_p a_{ijp} \cdot b_{ipk} \cdot c_{pjk}$};
\node[] at (0,-2.6) {$3$-plex};

\end{scope}

\begin{scope}[xshift=140, yshift=0]

\coordinate [] (7) at (1,{-0.5*(1+sqrt(3))});
\coordinate [] (6) at (0,{-0.5*(1+sqrt(3))});
\coordinate [] (5) at (-1,{0.5*(1+sqrt(3))});
\coordinate [] (4) at (0,{0.5*(1+sqrt(3))});
\coordinate [] (3) at (1.5,0.5);
\coordinate [] (2) at (0.5,-0.5);
\coordinate [] (1) at (-0.5,0.5);
\coordinate [] (0) at (-1.5,-0.5);

\draw[opacity=0.5, lightgray, ultra thick] (0)--(1);
\draw[opacity=0.5, lightgray, ultra thick] (1)--(2);
\draw[opacity=0.5, lightgray, ultra thick] (2)--(3);

\draw[gray, opacity=0.5,fill=lightgray,fill opacity=0.5] (1)--(4)--(5)--cycle;
\draw[gray, opacity=0.5,fill=lightgray,fill opacity=0.5] (2)--(6)--(7)--cycle;

\filldraw[darkgray] (1) circle (0.07);
\filldraw[darkgray] (2) circle (0.07);

\node[] at (-1.3,0.2) {$m$};
\node[] at (-0.25,-0.1) {$n$};
\node[] at (0.8,0.2) {$u$};
\node[] at (-0.5,{0.5*(1+0.65*sqrt(3))}) {$a$};
\node[] at (0.5,{-0.5*(1+0.65*sqrt(3))}) {$b$};

\node[] at (0,-1.8) {$\displaystyle \sum_{p} \sum_{q} m_{ip}\cdot a_{kpl}\cdot n_{pq} \cdot b_{sqr} \cdot u_{qj}$};
\node[] at (0,-2.6) {$6$-plex};

\end{scope}

\begin{scope}[xshift=320, yshift=0]

\coordinate [] (6) at (0,0);
\coordinate [] (5) at (2,0);
\coordinate [] (4) at (1,-1);
\coordinate [] (3) at (1,1);
\coordinate [] (2) at (-1,-1);
\coordinate [] (1) at (-1,1);
\coordinate [] (0) at (-2,0);

\draw[gray, opacity=0.5,fill=lightgray,fill opacity=0.5] (1)--(0)--(2)--cycle;
\draw[gray, opacity=0.5,fill=lightgray,fill opacity=0.5] (4)--(6)--(3)--cycle;
\draw[gray, opacity=0.5,fill=lightgray,fill opacity=0.5] (5)--(4)--(3)--cycle;

\draw[opacity=0.5, lightgray, ultra thick] (1)--(6);
\draw[opacity=0.5, lightgray, ultra thick] (2)--(6);

\filldraw[darkgray] (3) circle (0.07);
\filldraw[darkgray] (4) circle (0.07);
\filldraw[darkgray] (6) circle (0.07);

\node[] at (-0.25,0.7) {$m$};
\node[] at (-0.25,-0.7) {$n$};
\node[] at (-1.4,0) {$a$};
\node[] at (0.65,0.04) {$b$};
\node[] at (1.35,-0.01) {$c$};

\node[] at (0,-1.8) {$\displaystyle \sum_{p} \sum_{q} \sum_{r} a_{ijk} \cdot m_{jp}\cdot n_{kp} \cdot b_{pqr} \cdot c_{qrl}$};
\node[] at (0,-2.6) {$4$-plex};

\end{scope}

\end{tikzpicture}
\end{center}
We should remark at this point that the fact that plex diagrams faithfully encode general plex products (or array multiplications up to sequentialization) is directly dependent on the algebraic properties of the value commutative semiring $(V,+,\cdot)$. In particular, we note that all the defining axioms of associativity, commutativity and distributivity of the binary operations in $(V,+,\cdot)$ are required for a general plex product to be uniquely defined from a plex diagram.\newline

Distinct vertices in a plex represent separate indices but they may belong to the same index set. In the extreme case of a constellation consisting of multiple copies of the same index set, the reordering class of a regular array defines a \textbf{regular plex}. When a plex has repeated index sets in its constellation we use vertex labels as index sets or index values interchangeably to emphasize domain or index element names respectively. For instance, given a regular $4$-plex $c$ on an index set $I$ we write both
\begin{center}
\begin{tikzpicture}[line join = round, line cap = round]

\begin{scope}[xshift=0, yshift=0]

\coordinate [label=above:$I$] (3) at (0,{sqrt(2)},0);
\coordinate [label=left:$I$] (2) at ({-.5*sqrt(3)},0,-.5);
\coordinate [label=below:$I$] (1) at (0,0,1);
\coordinate [label=right:$I$] (0) at ({.5*sqrt(3)},0,-.5);

\draw[densely dotted,postaction={decorate}] (0)--(2);
\draw[fill=lightgray,fill opacity=.5] (1)--(0)--(3)--cycle;
\draw[fill=gray,fill opacity=.5] (2)--(1)--(3)--cycle;
\draw[gray, opacity=0.5, postaction={decorate}] (1)--(0);
\draw[gray, opacity=0.5, postaction={decorate}] (1)--(2);
\draw[gray, opacity=0.5, postaction={decorate}] (2)--(3);
\draw[gray, opacity=0.5, postaction={decorate}] (1)--(3);
\draw[gray, opacity=0.5, postaction={decorate}] (0)--(3);

\node[] at (0.2,0.5) {$c$};

\end{scope}

\node[] at (2.65,0.5) {and};

\begin{scope}[xshift=140, yshift=0]

\coordinate [label=above:$l$] (3) at (0,{sqrt(2)},0);
\coordinate [label=left:$i$] (2) at ({-.5*sqrt(3)},0,-.5);
\coordinate [label=below:$j$] (1) at (0,0,1);
\coordinate [label=right:$k$] (0) at ({.5*sqrt(3)},0,-.5);

\draw[densely dotted,postaction={decorate}] (0)--(2);
\draw[fill=lightgray,fill opacity=.5] (1)--(0)--(3)--cycle;
\draw[fill=gray,fill opacity=.5] (2)--(1)--(3)--cycle;
\draw[gray, opacity=0.5, postaction={decorate}] (1)--(0);
\draw[gray, opacity=0.5, postaction={decorate}] (1)--(2);
\draw[gray, opacity=0.5, postaction={decorate}] (2)--(3);
\draw[gray, opacity=0.5, postaction={decorate}] (1)--(3);
\draw[gray, opacity=0.5, postaction={decorate}] (0)--(3);

\node[] at (0.2,0.5) {$c$};

\end{scope}

\end{tikzpicture}
\end{center}
Plex diagrams representing products of regular plexes (or any plex product involving several copies of the same index) are subject to additional freedom and several generically distinct algebraic operations can be defined from the same plex diagram. This can be illustrated by the following contraction of a pair of $3$-plexes:
\begin{center}
\begin{tikzpicture}[]

\begin{scope}[xshift=0, yshift=0]

\begin{scope}[scale=0.7, xshift=0, yshift=0]

\coordinate [label=right:$K$] (6) at ({3*sqrt(3)},0);
\coordinate [label=below:$J$] (5) at ({2*sqrt(3)},-1);
\coordinate [label=above:$J$] (4) at ({2*sqrt(3)},1);
\coordinate [label=left:$I$] (3) at ({sqrt(3)},0);
\coordinate [] (2) at (0,-1);
\coordinate [] (1) at (0,1);
\coordinate [] (0) at (0,0);

\draw[gray, opacity=0.5,fill=lightgray,fill opacity=0.5] (3)--(4)--(5)--cycle;
\draw[gray, opacity=0.5,fill=lightgray,fill opacity=0.5] (4)--(5)--(6)--cycle;

\end{scope}

\filldraw[darkgray] (5) circle (0.07);
\filldraw[darkgray] (4) circle (0.07);

\node[] at ({0.7*(2*sqrt(3)-0.6)},0) {$a$};
\node[] at ({0.7*(2*sqrt(3)+0.6)},0.03) {$b$};

\end{scope}

\end{tikzpicture}
\end{center}
Since the $3$-plexes share two indices on the same index set $J$, the plex diagram does not faithfully capture which of the two $J$ indices of each plex are being contracted. In this case, there are two possibilities, which appear as a permutation of contracted indices when expressed in array notation after choosing a sequentialization:
\begin{equation*}
    \sum_{p,q\in J} a_{ipq} \cdot b_{pqk} \qquad \text{ or } \qquad \sum_{p,q\in J} a_{iqp} \cdot b_{pqk}.
\end{equation*}
In general, a plex diagram involving $n$ copies of the same index set in incidences or contractions defines $|S_n|$ generically distinct plex multiplications called \textbf{twistings}.\newline

The identity arrays $\delta_n$ defined in Section \ref{array} generalize to regular $n$-plexes in the obvious way. Identity plexes are special in that they are in 1-to-1 correspondence with their sequentializations since the identity arrays $\delta_n$ are totally symmetric and thus they are the only members of their reordering classes. Following the category theory convention to represent unique identity morphisms as long equals signs, we write the identity plexes as follows:
\begin{center}
\begin{tikzpicture}[]

\begin{scope}[xshift=0, yshift=0]

\coordinate [] (1) at (1.5,0.5);
\coordinate [] (0) at (0.5,0.5);

\draw[lightgray, line width=1pt, double] (1)--(0);

\node[] at (1,-1.1) {$\delta_2$};

\end{scope}

\begin{scope}[xshift=100, yshift=0]

\coordinate [] (2a) at (0.5+0.03,{0.5*sqrt(3)});
\coordinate [] (2b) at (0.5-0.03,{0.5*sqrt(3)});
\coordinate [] (1a) at ({1+0.03*0.5},{0.03*0.5*sqrt(3)});
\coordinate [] (1b) at ({1-0.03*0.5},{-0.03*0.5*sqrt(3)});
\coordinate [] (0a) at ({0.03*0.5},{-0.03*0.5*sqrt(3)});
\coordinate [] (0b) at ({-0.03*0.5},{0.03*0.5*sqrt(3)});

\draw[lightgray, line width=1pt] (0b) to[out=30, in=270] (2b);
\draw[lightgray, line width=1pt] (2a) to[out=270, in=150] (1a);
\draw[lightgray, line width=1pt] (1b) to[out=150, in=30] (0a);

\node[] at (0.5,-1.1) {$\delta_3$};

\end{scope}

\begin{scope}[xshift=190, yshift=0]

\coordinate [] (3a) at (0+0.025,1+0.025);
\coordinate [] (3b) at (0-0.025,1-0.025);
\coordinate [] (2a) at (1+0.025,1-0.025);
\coordinate [] (2b) at (1-0.025,1+0.025);
\coordinate [] (1a) at (1+0.025,0+0.025);
\coordinate [] (1b) at (1-0.025,0-0.025);
\coordinate [] (0a) at (0+0.025,0-0.025);
\coordinate [] (0b) at (0-0.025,0+0.025);

\draw[lightgray, line width=1pt] (0b) to[out=45, in=315] (3b);
\draw[lightgray, line width=1pt] (3a) to[out=315, in=225] (2b);
\draw[lightgray, line width=1pt] (2a) to[out=225, in=135] (1a);
\draw[lightgray, line width=1pt] (1b) to[out=135, in=45] (0a);

\node[] at (0.6,-1.1) {$\delta_4$};

\end{scope}

\begin{scope}[xshift=280, yshift=0]

\node[] at (0,0.5) {$\cdots$};

\end{scope}

\end{tikzpicture}
\end{center}
Identity plexes form a subalgebra under incidence and contraction. Diagrammatically, this simply expressed as the fact that any (connected) plex diagram involving only identity plexes equals the identity plex of order equal to the number of free indices, for instance:
\begin{center}
\begin{tikzpicture}[]

\begin{scope}[xshift=0, yshift=0]

\coordinate [] (2) at (0.5,{0.5*sqrt(3)});
\coordinate [] (1) at (1,0);
\coordinate [] (0) at (0,0);

\draw[lightgray, line width=1pt, double] (2)--(0);
\draw[lightgray, line width=1pt, double] (2)--(1);

\end{scope}

\node[] at (2.3,0.3) {$=$};

\begin{scope}[xshift=100, yshift=0]

\coordinate [] (2a) at (0.5+0.03,{0.5*sqrt(3)});
\coordinate [] (2b) at (0.5-0.03,{0.5*sqrt(3)});
\coordinate [] (1a) at ({1+0.03*0.5},{0.03*0.5*sqrt(3)});
\coordinate [] (1b) at ({1-0.03*0.5},{-0.03*0.5*sqrt(3)});
\coordinate [] (0a) at ({0.03*0.5},{-0.03*0.5*sqrt(3)});
\coordinate [] (0b) at ({-0.03*0.5},{0.03*0.5*sqrt(3)});

\draw[lightgray, line width=1pt] (0b) to[out=30, in=270] (2b);
\draw[lightgray, line width=1pt] (2a) to[out=270, in=150] (1a);
\draw[lightgray, line width=1pt] (1b) to[out=150, in=30] (0a);

\end{scope}

\node[] at (5.7,0.3) {$=$};

\begin{scope}[xshift=190, yshift=0]

\coordinate [] (0) at (0.5,{0.5/sqrt(3)});
\coordinate [] (2) at (0.5,{0.5*sqrt(3)});
\coordinate [] (1) at (1,0);
\coordinate [] (3) at (0,0);

\draw[lightgray, line width=1pt, double] (1)--(0);
\draw[lightgray, line width=1pt, double] (2)--(0);
\draw[lightgray, line width=1pt, double] (3)--(0);

\filldraw[darkgray] (0) circle (0.07);

\end{scope}

\end{tikzpicture}
\end{center}
Attaching identity plexes to a given plex via incidence or contraction generally defines the plex version of \textbf{diagonal extension}: given a plex on a constellation of indices $\{I_1,I_2, \dots, I_n\}$ we can always define a new plex by taking several copies of an index set $\{I_1, I_1, \dots, I_1, I_2, \dots , I_n\}$ and assigning the zero value $0\in V$ to all the non diagonal combinations of $I_1$ index elements. For instance, given a $3$-plex $a$, the contraction with a single index of the identity $3$-plex
\begin{center}
\begin{tikzpicture}[line join = round, line cap = round]

\coordinate [] (4) at ({-0.5*sqrt(3)},-0.5);
\coordinate [] (3) at ({-0.5*sqrt(3)},0.5);
\coordinate [] (2) at ({0.5*sqrt(3)},-0.5);
\coordinate [] (1) at ({0.5*sqrt(3)},0.5);
\coordinate [] (0) at (0,0);

\begin{scope}[xshift=0, yshift=0, rotate=-30]

\coordinate [] (2a) at (0.5+0.03,{0.5*sqrt(3)});
\coordinate [] (2b) at (0.5-0.03,{0.5*sqrt(3)});
\coordinate [] (1a) at ({1+0.03*0.5},{0.03*0.5*sqrt(3)});
\coordinate [] (1b) at ({1-0.03*0.5},{-0.03*0.5*sqrt(3)});
\coordinate [] (0a) at ({0.03*0.5},{-0.03*0.5*sqrt(3)});
\coordinate [] (0b) at ({-0.03*0.5},{0.03*0.5*sqrt(3)});

\draw[lightgray, line width=1pt] (0b) to[out=30, in=270] (2b);
\draw[lightgray, line width=1pt] (2a) to[out=270, in=150] (1a);
\draw[lightgray, line width=1pt] (1b) to[out=150, in=30] (0a);

\end{scope}

\draw[lightgray, opacity=0.5, fill=lightgray,fill opacity=0.5] (0)--(3)--(4)--cycle;

\filldraw[darkgray] (0) circle (0.07);

\node[] at (-0.57,0) {$a$};

\end{tikzpicture}
\end{center}
defines a $4$-plex that encodes the same information as $a$. Identity plexes act as \textbf{compositionally neutral} elements with respect to contraction. Identity plexes can be inserted into any marked vertex of a plex diagram without altering the outcome, as illustrated by the following diagrammatic equations:
\begin{center}
\begin{tikzpicture}[]

\begin{scope}[xshift=0, yshift=0]

\coordinate [] (6) at ({0.5*sqrt(3)},0);
\coordinate [] (4) at ({-0.5*sqrt(3)},-0.5);
\coordinate [] (3) at ({-0.5*sqrt(3)},0.5);
\coordinate [] (2) at ({0.5*sqrt(3)},-0.5);
\coordinate [] (1) at ({0.5*sqrt(3)},0.5);
\coordinate [] (0) at (0,0);

\draw[lightgray, line width=1pt, double] (6)--(0);
\draw[lightgray, opacity=0.5, fill=lightgray,fill opacity=0.5] (0)--(3)--(4)--cycle;

\filldraw[darkgray] (0) circle (0.07);

\end{scope}

\node[] at (1.45,0) {$=$};

\begin{scope}[xshift=85, yshift=0]

\coordinate [] (6) at ({0.5*sqrt(3)},0);
\coordinate [] (4) at ({-0.5*sqrt(3)},-0.5);
\coordinate [] (3) at ({-0.5*sqrt(3)},0.5);
\coordinate [] (2) at ({0.5*sqrt(3)},-0.5);
\coordinate [] (1) at ({0.5*sqrt(3)},0.5);
\coordinate [] (0) at (0,0);

\draw[lightgray, opacity=0.5, fill=lightgray,fill opacity=0.5] (0)--(3)--(4)--cycle;

\end{scope}

\begin{scope}[xshift=180, yshift=0]

\coordinate [] (6) at (-0.5,{0.5*sqrt(3)});
\coordinate [] (5) at (-1,0);
\coordinate [] (4) at (-0.5,{-0.5*sqrt(3)});
\coordinate [] (3) at (0.5,{-0.5*sqrt(3)});
\coordinate [] (2) at (1,0);
\coordinate [] (1) at (0.5,{0.5*sqrt(3)});
\coordinate [] (0) at (0,0);

\draw[lightgray, opacity=0.5, fill=lightgray,fill opacity=0.5] (0)--(1)--(2)--cycle;
\draw[lightgray, opacity=0.5, fill=lightgray,fill opacity=0.5] (0)--(3)--(4)--cycle;
\draw[lightgray, opacity=0.5, fill=lightgray,fill opacity=0.5] (0)--(5)--(6)--cycle;

\filldraw[darkgray] (0) circle (0.07);

\end{scope}

\node[] at (8.05,0) {$=$};

\begin{scope}[xshift=290, yshift=0]

\coordinate [] (9) at (-0.5,{0.5/sqrt(3)});
\coordinate [] (8) at (0,{-1/sqrt(3)});
\coordinate [] (7) at (0.5,{0.5/sqrt(3)});
\coordinate [] (6) at (-1,{0.5*sqrt(3)+0.5/sqrt(3)});
\coordinate [] (5) at (-1.5,{0.5/sqrt(3)});
\coordinate [] (4) at (-0.5,{-0.5*sqrt(3)-1/sqrt(3)});
\coordinate [] (3) at (0.5,{-0.5*sqrt(3)-1/sqrt(3)});
\coordinate [] (2) at (1.5,{0.5/sqrt(3)});
\coordinate [] (1) at (1,{0.5*sqrt(3)+0.5/sqrt(3)});
\coordinate [] (0) at (0,0);

\draw[lightgray, opacity=0.5, fill=lightgray,fill opacity=0.5] (7)--(1)--(2)--cycle;
\draw[lightgray, opacity=0.5, fill=lightgray,fill opacity=0.5] (8)--(3)--(4)--cycle;
\draw[lightgray, opacity=0.5, fill=lightgray,fill opacity=0.5] (9)--(5)--(6)--cycle;

\begin{scope}[xshift=0, yshift=-16.5, rotate=60]

\coordinate [] (2a) at (0.5+0.03,{0.5*sqrt(3)});
\coordinate [] (2b) at (0.5-0.03,{0.5*sqrt(3)});
\coordinate [] (1a) at ({1+0.03*0.5},{0.03*0.5*sqrt(3)});
\coordinate [] (1b) at ({1-0.03*0.5},{-0.03*0.5*sqrt(3)});
\coordinate [] (0a) at ({0.03*0.5},{-0.03*0.5*sqrt(3)});
\coordinate [] (0b) at ({-0.03*0.5},{0.03*0.5*sqrt(3)});

\draw[lightgray, line width=1pt] (0b) to[out=30, in=270] (2b);
\draw[lightgray, line width=1pt] (2a) to[out=270, in=150] (1a);
\draw[lightgray, line width=1pt] (1b) to[out=150, in=30] (0a);

\end{scope}

\filldraw[darkgray] (7) circle (0.07);
\filldraw[darkgray] (8) circle (0.07);
\filldraw[darkgray] (9) circle (0.07);

\end{scope}

\end{tikzpicture}
\end{center}
Identity plexes are also useful to diagrammatically express \textbf{self-incidences} and \textbf{self-contractions} cleanly. For instance, a self-incidence of a $3$-plex $a$ can be written as the simultaneous incidence and contraction with the identity $2$-plex
\begin{center}
\begin{tikzpicture}[]

\coordinate [] (4) at (1.366,0.8);
\coordinate [] (3) at (1.366,-0.2);
\coordinate [] (2) at (0.5,0.3);

\draw[lightgray, line width=1pt, double] (3) to[out=200, in=280] (2);
\draw[lightgray, opacity=0.5, fill=lightgray,fill opacity=0.5] (2)--(3)--(4)--cycle;

\filldraw[darkgray] (2) circle (0.07);

\node[] at (1.05,0.3) {$a$};

\node[] at (2.2,0.2) {$=$};

\node[] at (5.85,0.1) {$\displaystyle \sum_p \delta_{ip}\cdot a_{ipk} \, = \, a_{iik} \, = \, \sum_p \delta_{pi}\cdot a_{pik}$};

\begin{scope}[xshift=280, yshift=0]

\coordinate [] (4) at (1.366,0.8);
\coordinate [] (3) at (1.366,-0.2);
\coordinate [] (2) at (0.5,0.3);

\draw[lightgray, line width=1pt, double] (3) to[out=200, in=280] (2);
\draw[lightgray, opacity=0.5, fill=lightgray,fill opacity=0.5] (2)--(3)--(4)--cycle;

\filldraw[darkgray] (3) circle (0.07);

\node[] at (1.05,0.3) {$a$};

\node[] at (-0.3,0.2) {$=$};

\end{scope}

\end{tikzpicture}
\end{center}
where we have chosen some sequentialization. By further contracting the incident index in the plex diagram we obtain a self-contraction or, in this case, a \textbf{partial trace}, since two indices are being contracted:
\begin{center}
\begin{tikzpicture}[]

\coordinate [] (4) at (1.366,0.8);
\coordinate [] (3) at (1.366,-0.2);
\coordinate [] (2) at (0.5,0.3);

\draw[lightgray, line width=1pt, double] (3) to[out=200, in=280] (2);
\draw[lightgray, opacity=0.5, fill=lightgray,fill opacity=0.5] (2)--(3)--(4)--cycle;

\filldraw[darkgray] (2) circle (0.07);
\filldraw[darkgray] (3) circle (0.07);

\node[] at (1.05,0.3) {$a$};

\node[] at (2.2,0.2) {$=$};

\node[] at (5.3,0.1) {$\displaystyle \sum_p \sum_q \delta_{pq}\cdot a_{pqk} \, = \, \sum_p a_{ppk} $};

\end{tikzpicture}
\end{center}
Alternatively, self-incidences and self-contractions may sometimes be diagrammatically expressed by bending or folding plexes so as to bring the vertices which represent incident indexes into contact. This can be illustrated by the following diagrammatic equations expressing the ordinary notion of \textbf{trace} for a $2$-array $m$:
\begin{center}
\begin{tikzpicture}[]

\begin{scope}[xshift=0, yshift=0]

\coordinate [] (2) at (-0.5,-0.5);
\coordinate [] (1) at (0.5,0.5);
\coordinate [] (0) at (0,0);

\draw[opacity=0.5, lightgray, ultra thick] (2) to[out=90, in=180] (1);
\draw[lightgray, line width=1pt, double] (2) to[out=0, in=270] (1);

\filldraw[darkgray] (2) circle (0.07);

\node[] at (-0.5,0.5) {$m$};

\node[] at (1.2,0) {$=$};

\node[] at (2,0) {$m_{ii}$};

\end{scope}

\begin{scope}[xshift=100, yshift=0]

\coordinate [] (2) at (-0.5,-0.5);
\coordinate [] (1) at (0.5,0.5);
\coordinate [] (0) at (0,0);

\draw[opacity=0.5, lightgray, ultra thick] (0) to[out=90, in=270] (0);

\draw[opacity=0.5, lightgray, ultra thick] (0.5,0) circle (0.5);
\filldraw[opacity=0.5, lightgray] (0) circle (0.07);

\node[] at (-0.7,0) {$=$};

\end{scope}

\begin{scope}[xshift=230, yshift=0]

\coordinate [] (2) at (-0.5,-0.5);
\coordinate [] (1) at (0.5,0.5);
\coordinate [] (0) at (0,0);

\draw[opacity=0.5, lightgray, ultra thick] (2) to[out=90, in=180] (1);
\draw[lightgray, line width=1pt, double] (2) to[out=0, in=270] (1);

\filldraw[darkgray] (2) circle (0.07);
\filldraw[darkgray] (1) circle (0.07);

\node[] at (-0.5,0.5) {$m$};

\node[] at (1.2,0) {$=$};

\node[] at (2.35,-0.1) {$\displaystyle \sum_p m_{pp}$};

\end{scope}

\begin{scope}[xshift=350, yshift=0]

\coordinate [] (2) at (-0.5,-0.5);
\coordinate [] (1) at (0.5,0.5);
\coordinate [] (0) at (0,0);

\draw[opacity=0.5, lightgray, ultra thick] (0) to[out=90, in=270] (0);

\draw[opacity=0.5, lightgray, ultra thick] (0.5,0) circle (0.5);
\filldraw[darkgray] (0) circle (0.07);

\node[] at (-0.7,0) {$=$};

\end{scope}

\end{tikzpicture}
\end{center}

The tensor product of arrays extends to plexes in an obvious way via quotient by reordering. Diagrammatically, the \textbf{plex tensor product} corresponds simply to juxtaposition or grouping of plexes as illustrated here:
\begin{center}
\begin{tikzpicture}[line join = round, line cap = round]

\draw[rounded corners] (0,-0.8) rectangle (4,2.3);

\draw[rounded corners] (4,0.5)--(4.5,0.5)--(4.5,1)--(4,1);

\node[] at (4.23,0.755) {$\otimes$};

\begin{scope}[xshift=0, yshift=0]

\coordinate [label=left:$i$] (0) at (0.5,0.5);

\filldraw [lightgray] (0) circle (2pt);

\node[] at (0.7,0.7) {$v$};

\end{scope}

\begin{scope}[xshift=40, yshift=25]

\coordinate [label=right:$k$] (1) at (1.5,1);
\coordinate [label=left:$j$] (0) at (0.5,0.5);

\draw[opacity=0.5, lightgray, ultra thick] (1)--(0);

\node[] at (0.8,1) {$b$};

\end{scope}

\begin{scope}[xshift=65, yshift=-10]

\coordinate [label=above:$l$] (2) at (0.5,{0.5*sqrt(3)});
\coordinate [label=right:$m$] (1) at (1,0);
\coordinate [label=left:$n$] (0) at (0,0);

\draw[lightgray, opacity=0.5,fill=lightgray,fill opacity=0.5] (1)--(0)--(2)--cycle;

\node[] at (0.5,0.3) {$a$};

\end{scope}

\begin{scope}[xshift=207, yshift=20]

\coordinate [label=above left:$i$] (6) at (-0.5,{0.5*sqrt(3)});
\coordinate [label=left:$j$] (5) at (-1,0);
\coordinate [label=below left:$k$] (4) at (-0.5,{-0.5*sqrt(3)});
\coordinate [label=below right:$l$] (3) at (0.5,{-0.5*sqrt(3)});
\coordinate [label=right:$m$] (2) at (1,0);
\coordinate [label=above right:$n$] (1) at (0.5,{0.5*sqrt(3)});
\coordinate [] (0) at (0,0);

\node[] at (-2.1,0) {$=$};

\filldraw[lightgray, opacity=0.5] (1)--(2)--(3)--(4)--(5)--(6)--cycle;

\node[] at (-0.4,-0.3) {$v$};
\node[] at (0,0.4) {$b$};
\node[] at (0.35,-0.3) {$a$};
\node[] at (0,0) {$\otimes$};

\end{scope}

\begin{scope}[xshift=320, yshift=20]

\node[] at (-1.8,0) {$=$};

\node[] at (0,-0.04) {$v_i \cdot b_{jk} \cdot a_{lmn}$};

\end{scope}

\end{tikzpicture}
\end{center}
where we have chosen some sequentialization. We note again that the commutativity and associativity properties of the semiring multiplication $\cdot$ are crucial for the tensor product of plexes to be diagrammatically well-defined. Recall that in our convention $0$-plexes correspond to elements of the value semiring $(V,+,\cdot)$, therefore, the tensor product of $0$-plexes with arbitrary plexes corresponds to the standard entry-wise \textbf{scalar multiplication}. Together with entry-wise addition of plexes, this operations make plexes of a fixed order $n$ into a module over the semiring $(V,+,\cdot)$. This suggests a compatible use of the notation introduced for tensor products as groupings by simply juxtaposing a semiring element to a plex to indicate scalar multiplication, as illustrated here for the product of a $3$-plex $a$ with a semiring element $\nu\in V$:
\begin{center}
\begin{tikzpicture}[line join = round, line cap = round]

\draw[rounded corners] (-0.4,-0.4) rectangle (1.8,1);

\draw[rounded corners] (1.8,0.07)--(2.3,0.07)--(2.3,0.57)--(1.8,0.57);

\node[] at (2.03,0.31) {$\otimes$};

\begin{scope}[xshift=0, yshift=0]

\coordinate [] (4) at (1.366,0.8);
\coordinate [] (3) at (1.366,-0.2);
\coordinate [] (2) at (0.5,0.3);

\node[] at (0,0.3) {$\nu$};

\draw[lightgray, opacity=0.5, fill=lightgray,fill opacity=0.5] (2)--(3)--(4)--cycle;

\node[] at (1.05,0.3) {$a$};

\node[] at (3,0.3) {$=$};

\end{scope}

\begin{scope}[xshift=110, yshift=0]

\coordinate [] (4) at (1.366,0.8);
\coordinate [] (3) at (1.366,-0.2);
\coordinate [] (2) at (0.5,0.3);

\node[] at (0,0.3) {$\nu$};

\draw[lightgray, opacity=0.5, fill=lightgray,fill opacity=0.5] (2)--(3)--(4)--cycle;

\node[] at (1.05,0.3) {$a$};

\node[] at (2.2,0.3) {$=$};

\end{scope}

\begin{scope}[xshift=200, yshift=0]

\coordinate [] (4) at (1.366,0.8);
\coordinate [] (3) at (1.366,-0.2);
\coordinate [] (2) at (0.5,0.3);

\node[] at (0,0.26) {$\nu \cdot a_{ijk}$};

\end{scope}

\end{tikzpicture}
\end{center}
For plexes of order greater than $0$, the plex tensor product is strictly order-increasing, as illustrated by the following diagrammatic equations
\begin{center}
\begin{tikzpicture}[line join = round, line cap = round]

\begin{scope}[xshift=0, yshift=0]

\draw[rounded corners] (-0.4,-0.4) rectangle (1.8,1);

\draw[rounded corners] (1.8,0.07)--(2.3,0.07)--(2.3,0.57)--(1.8,0.57);

\node[] at (2.03,0.31) {$\otimes$};

\filldraw[opacity=0.5, lightgray] (1.2,0.65) circle (0.07);
\filldraw[opacity=0.5, lightgray] (1,0.05) circle (0.07);
\filldraw[opacity=0.5, lightgray] (0.2,0.35) circle (0.07);

\node[] at (3,0.3) {$=$};

\end{scope}

\begin{scope}[xshift=90, yshift=0]

\coordinate [] (4) at (1.366,0.8);
\coordinate [] (3) at (1.366,-0.2);
\coordinate [] (2) at (0.5,0.3);

\draw[lightgray, opacity=0.5, fill=lightgray,fill opacity=0.5] (2)--(3)--(4)--cycle;

\end{scope}

\begin{scope}[xshift=200, yshift=0]

\draw[rounded corners] (-0.4,-0.4) rectangle (1.8,1);

\draw[rounded corners] (1.8,0.07)--(2.3,0.07)--(2.3,0.57)--(1.8,0.57);

\node[] at (2.03,0.31) {$\otimes$};

\draw[opacity=0.5, lightgray, ultra thick] (0,-0.05)--(0.6,0.55);
\draw[opacity=0.5, lightgray, ultra thick] (0.7,0.1)--(1.5,0.7);

\node[] at (3,0.3) {$=$};

\end{scope}

\begin{scope}[xshift=330, yshift=-7]

\coordinate [] (3) at (0,{sqrt(2)},0);
\coordinate [] (2) at ({-.5*sqrt(3)},0,-.5);
\coordinate [] (1) at (0,0,1);
\coordinate [] (0) at ({.5*sqrt(3)},0,-.5);

\draw[densely dotted,postaction={decorate}] (0)--(2);
\draw[fill=lightgray,fill opacity=.5] (1)--(0)--(3)--cycle;
\draw[fill=gray,fill opacity=.5] (2)--(1)--(3)--cycle;
\draw[gray, opacity=0.5, postaction={decorate}] (1)--(0);
\draw[gray, opacity=0.5, postaction={decorate}] (1)--(2);
\draw[gray, opacity=0.5, postaction={decorate}] (2)--(3);
\draw[gray, opacity=0.5, postaction={decorate}] (1)--(3);
\draw[gray, opacity=0.5, postaction={decorate}] (0)--(3);

\end{scope}

\end{tikzpicture}
\end{center}

\section{Associativity as Confluence} \label{confl}

We now come to the central topic of this paper: associativity. In this section we shall use array algebra and the plex formalism introduced in Sections \ref{array} and \ref{plex} to give a precise diagrammatic characterization of the notion of sequential associativity in preparation for our discussion on ternary generalizations of associativity in Sections \ref{fish} and \ref{towards}.\newline

There are many equivalent ways one can think about binary associativity: an algebraic property of an operation, a compatibility condition for left and right actions, the single nature of a sequence of processes, or a feature of some rewrite systems. Although it is useful to keep all these interpretations in mind throughout our discussion, we shall focus on the rewrite approach to associativity. This is in keeping with our de-emphasis on sequentiality, as we are aiming to establish a notion of associativity that is general enough to apply to non-sequential settings but that recovers the standard definition when applied to binary operations and compositions.\newline

We begin by considering the lowest-arity non-trivial order-preserving plex product that can be defined: the binary composition of $2$-plexes
\begin{center}
\begin{tikzpicture}[line join = round, line cap = round]

\begin{scope}[xshift=0, yshift=0]

\coordinate [] (2) at (1,0);
\coordinate [] (1) at (0,1);
\coordinate [] (0) at (-1,0);

\draw[opacity=0.5, lightgray, ultra thick] (0)--(1);
\draw[opacity=0.5, lightgray, ultra thick] (1)--(2);

\node[] at (-0.8,0.6) {$a$};
\node[] at (0.8,0.65) {$b$};

\filldraw[darkgray] (1) circle (0.07);

\end{scope}

\begin{scope}[xshift=120, yshift=10]

\node[] at (-2,0) {$=$};

\coordinate [] (2) at (1,0);
\coordinate [] (1) at (0,1);
\coordinate [] (0) at (-1,0);

\draw[opacity=0.5, lightgray, ultra thick] (0)--(2);

\node[] at (0,0.4) {$ab$};

\end{scope}

\end{tikzpicture}
\end{center}
where the LHS plex diagram is called a \textbf{vee} of $2$-plexes. This is indeed the pre-sequential manifestation of ordinary composition of functions, relations, and, generally, morphisms in a category \cite{leinster2014basic}. This plex product thus appears as the natural starting point to characterize the notion of associativity. Note that despite the sequential appearance of the standard labelling convention above, where we indicate plex by alphabetic labels and the product by juxtaposition of labels, the diagrammatic correspondence between symbols and plexes is entirely determined by incidence topology, i.e. it only depends on the relative positions of common indices. This can be seen explicitly if we label indices
\begin{center}
\begin{tikzpicture}[line join = round, line cap = round]

\begin{scope}[xshift=0, yshift=0]

\coordinate [label=right:$k$] (2) at (1,0);
\coordinate [label=above:$j$] (1) at (0,1);
\coordinate [label=left:$i$] (0) at (-1,0);

\draw[opacity=0.5, lightgray, ultra thick] (0)--(1);
\draw[opacity=0.5, lightgray, ultra thick] (1)--(2);

\node[] at (-0.8,0.6) {$a$};
\node[] at (0.8,0.65) {$b$};

\filldraw[darkgray] (1) circle (0.07);

\end{scope}

\begin{scope}[xshift=120, yshift=10]

\node[] at (-2,0) {$=$};

\coordinate [label=right:$k$] (2) at (1,0);
\coordinate [label=left:$i$] (0) at (-1,0);

\draw[opacity=0.5, lightgray, ultra thick] (0)--(2);

\node[] at (0,0.4) {$ab$};

\end{scope}

\end{tikzpicture}
\end{center}
and observe that the label `$a$' is adjacent to the index $i$ and the label `$b$' is adjacent to the index $k$ both in the plex diagram representing the plex product (LHS) and in the label `$ab$' for the resulting plex (RHS). Consequently, we have the following pair of equivalent labellings of the composition $2$-plex:
\begin{center}
\begin{tikzpicture}[line join = round, line cap = round]

\begin{scope}[xshift=0, yshift=0]

\coordinate [label=right:$k$] (2) at (1,0);
\coordinate [label=left:$i$] (0) at (-1,0);

\draw[opacity=0.5, lightgray, ultra thick] (0)--(2);

\node[] at (0,0.4) {$ab$};

\end{scope}

\begin{scope}[xshift=120, yshift=0]

\node[] at (-2,0) {$=$};

\coordinate [label=right:$i$] (2) at (1,0);
\coordinate [label=left:$k$] (0) at (-1,0);

\draw[opacity=0.5, lightgray, ultra thick] (0)--(2);

\node[] at (0,0.4) {$ba$};

\end{scope}

\end{tikzpicture}
\end{center}

The associativity property of the binary composition of $2$-plexes amounts to the fact that the composition of three $2$-plexes evaluates uniquely to a well-defined $2$-plex
\begin{center}
\begin{tikzpicture}[line join = round, line cap = round]

\begin{scope}[xshift=0, yshift=0]

\coordinate [] (3) at (2,1);
\coordinate [] (2) at (1,0);
\coordinate [] (1) at (0,1);
\coordinate [] (0) at (-1,0);

\draw[opacity=0.5, lightgray, ultra thick] (0)--(1);
\draw[opacity=0.5, lightgray, ultra thick] (1)--(2);
\draw[opacity=0.5, lightgray, ultra thick] (2)--(3);

\node[] at (-0.8,0.6) {$a$};
\node[] at (0.8,0.65) {$b$};
\node[] at (2,0.6) {$c$};

\filldraw[darkgray] (1) circle (0.07);
\filldraw[darkgray] (2) circle (0.07);

\end{scope}

\begin{scope}[xshift=150, yshift=10]

\node[] at (-2,0) {$=$};

\coordinate [] (2) at (1,0);
\coordinate [] (1) at (0,1);
\coordinate [] (0) at (-1,0);

\draw[opacity=0.5, lightgray, ultra thick] (0)--(2);

\node[] at (0,0.4) {$abc$};

\end{scope}

\end{tikzpicture}
\end{center}
This plex diagram is called a \textbf{zee} of $2$-plexes. However, as remarked in Section \ref{plex} when we gave the general definition of a plex product, this is a general property of all plex diagrams by construction: a simple connected hypergraph uniquely specifies (up to sequentialization) a multiplication of arrays with values in a commutative semiring. Consequently, in order to isolate what is special about the binary composition of $2$-plexes we must find a characterization of associativity articulated purely within the hypergraph formalism of plex diagrams.\newline

The key concept that we need to borrow from hypergraph theory is that of motif rewrite. Given a hypergraph $H$, a \textbf{motif} is a choice of isomorphism class of simple connected hypergraphs $[M]$ that is used to probe the subhypergraphs of $H$. \textbf{Motif detection} is the process by which subhypergraphs $S\subset H$ such that $M\cong S$ are found. In the context of the plex formalism, a motif is simply a choice of plex diagram (without labels) representing a fixed placeholder product of plexes. For instance, we can choose as our motif the following ternary composition of $3$-plexes
\begin{center}
\begin{tikzpicture}[line join = round, line cap = round]

\begin{scope}[xshift=0, yshift=0]

\coordinate [] (3) at ({0.5*sqrt(3)},-0.5);
\coordinate [] (2) at (-{0.5*sqrt(3)},-0.5);
\coordinate [] (1) at (0,1);
\coordinate [] (0) at (0,0);

\draw[gray, opacity=0.5,fill=lightgray,fill opacity=0.5] (1)--(0)--(2)--cycle;
\draw[gray, opacity=0.5,fill=lightgray,fill opacity=0.5] (2)--(0)--(3)--cycle;
\draw[gray, opacity=0.5,fill=lightgray,fill opacity=0.5] (1)--(0)--(3)--cycle;

\filldraw[darkgray] (0) circle (0.07);

\end{scope}

\node[] at (1.8,0) {$=$};

\begin{scope}[xshift=75, yshift=-8]

\coordinate [] (2) at (0.5,{0.5*sqrt(3)});
\coordinate [] (1) at (1,0);
\coordinate [] (0) at (0,0);

\draw[lightgray, opacity=0.5,fill=lightgray,fill opacity=0.5] (1)--(0)--(2)--cycle;

\end{scope}

\end{tikzpicture}
\end{center}
known as the Bhattacharya-Mesner product \cite{mesner1990association,gnang2020bhattacharya}. Then, given an arbitrary plex diagram we can perform a motif detection procedure that identifies the subhypergraphs which correspond to the above plex diagram. Since motifs correspond to products, the plexes involved in each particular instance of the the motif can be operated to output a single plex thus effectively rewriting the plex diagram. An example with the Bhattacharya-Mesner motif
\begin{center}
\begin{tikzpicture}[line join = round, line cap = round]

\begin{scope}[xshift=0, yshift=0]

\coordinate [] (4) at (1.8,0.4);
\coordinate [] (5) at (2,-0.7);
\coordinate [] (3) at ({0.5*sqrt(3)},-0.5);
\coordinate [] (2) at (-{0.5*sqrt(3)},-0.5);
\coordinate [] (1) at (0,1);
\coordinate [] (0) at (0,0);

\draw[gray, opacity=0.5,fill=lightgray,fill opacity=0.5] (1)--(0)--(2)--cycle;
\draw[gray, opacity=0.5,fill=lightgray,fill opacity=0.5] (2)--(0)--(3)--cycle;
\draw[gray, opacity=0.5,fill=lightgray,fill opacity=0.5] (1)--(0)--(3)--cycle;

\draw[opacity=0.5, lightgray, ultra thick] (1) to[out=210, in=90] (2);
\draw[opacity=0.5, lightgray, ultra thick] (3)--(4);
\draw[opacity=0.5, lightgray, ultra thick] (3)--(5);

\filldraw[darkgray] (0) circle (0.07);

\end{scope}

\node[] at (3.2,0) {$\mapsto$};

\begin{scope}[xshift=150, yshift=0]

\coordinate [] (4) at (1.8,0.4);
\coordinate [] (5) at (2,-0.7);
\coordinate [] (3) at ({0.5*sqrt(3)},-0.5);
\coordinate [] (2) at (-{0.5*sqrt(3)},-0.5);
\coordinate [] (1) at (0,1);
\coordinate [] (0) at (0,0);

\draw[gray, opacity=0.5,fill=lightgray,fill opacity=0.5] (1)--(2)--(3)--cycle;

\draw[opacity=0.5, lightgray, ultra thick] (1) to[out=210, in=90] (2);
\draw[opacity=0.5, lightgray, ultra thick] (3)--(4);
\draw[opacity=0.5, lightgray, ultra thick] (3)--(5);

\end{scope}

\end{tikzpicture}
\end{center}
illustrates an instance of hypergraph \textbf{motif rewrite}. In general, motif detection and rewrites define a multiway rewrite system \cite{wolfram2020class} of hypergraphs.\newline

Returning to the binary composition of $2$-plexes, we can now express associativity as a property of the rewrite system resulting from doing vee motif rewrites on the zee diagram:
\begin{center}
\begin{tikzpicture}[line join = round, line cap = round]

\begin{scope}[xshift=0, yshift=0]

\coordinate [] (3) at (2,1);
\coordinate [] (2) at (1,0);
\coordinate [] (1) at (0,1);
\coordinate [] (0) at (-1,0);

\draw[opacity=0.5, lightgray, ultra thick] (0)--(1);
\draw[opacity=0.5, lightgray, ultra thick] (1)--(2);
\draw[opacity=0.5, lightgray, ultra thick] (2)--(3);

\node[] at (-0.8,0.6) {$a$};
\node[] at (0.8,0.65) {$b$};
\node[] at (2,0.6) {$c$};

\filldraw[darkgray] (1) circle (0.07);
\filldraw[darkgray] (2) circle (0.07);

\end{scope}

\begin{scope}[xshift=-100, yshift=-105]

\coordinate [] (3) at (2,1);
\coordinate [] (2) at (1,0);
\coordinate [] (1) at (0,1);
\coordinate [] (0) at (-1,0);

\draw[opacity=0.5, lightgray, ultra thick] (0)--(2);
\draw[opacity=0.5, lightgray, ultra thick] (2)--(3);

\node[] at (0,0.4) {$ab$};
\node[] at (2,0.6) {$c$};

\filldraw[darkgray] (2) circle (0.07);

\end{scope}

\begin{scope}[xshift=100, yshift=-105]

\coordinate [] (3) at (2,1);
\coordinate [] (2) at (1,0);
\coordinate [] (1) at (0,1);
\coordinate [] (0) at (-1,0);

\draw[opacity=0.5, lightgray, ultra thick] (0)--(1);
\draw[opacity=0.5, lightgray, ultra thick] (1)--(3);

\node[] at (-0.8,0.6) {$a$};
\node[] at (1,1.4) {$bc$};

\filldraw[darkgray] (1) circle (0.07);

\end{scope}

\begin{scope}[xshift=0, yshift=-190]

\coordinate [] (3) at (2,0);
\coordinate [] (2) at (1,0);
\coordinate [] (1) at (0,1);
\coordinate [] (0) at (-1,0);

\draw[opacity=0.5, lightgray, ultra thick] (0)--(3);

\node[] at (0.5,0.4) {$abc$};

\end{scope}

\draw[-stealth, line width=1pt] (-1.5,-0.5) to[out=210, in=90] (-2.5,-2);
\draw[-stealth, line width=1pt] (2.2,-0.5) to[out=-30, in=90] (3.2,-2);
\draw[-stealth, line width=1pt] (-2.5,-4.5) to[out=-90, in=150] (-1.5,-6);
\draw[-stealth, line width=1pt] (3.2,-4.5) to[out=-90, in=30] (2.2,-6);

\begin{scope}[scale=0.4, xshift=-250, yshift=-90]

\coordinate [] (3) at (2,1);
\coordinate [] (2) at (1,0);
\coordinate [] (1) at (0,1);
\coordinate [] (0) at (-1,0);

\draw[opacity=0.5, darkgray, ultra thick] (0)--(1);
\draw[opacity=0.5, darkgray, ultra thick] (1)--(2);
\draw[opacity=0.5, lightgray, ultra thick] (2)--(3);

\end{scope}

\begin{scope}[scale=0.4, xshift=275, yshift=-90]

\coordinate [] (3) at (2,1);
\coordinate [] (2) at (1,0);
\coordinate [] (1) at (0,1);
\coordinate [] (0) at (-1,0);

\draw[opacity=0.5, lightgray, ultra thick] (0)--(1);
\draw[opacity=0.5, darkgray, ultra thick] (1)--(2);
\draw[opacity=0.5, darkgray, ultra thick] (2)--(3);

\end{scope}

\begin{scope}[scale=0.4, xshift=-250, yshift=-400]

\coordinate [] (3) at (2,1);
\coordinate [] (2) at (1,0);
\coordinate [] (1) at (0,1);
\coordinate [] (0) at (-1,0);

\draw[opacity=0.5, darkgray, ultra thick] (0)--(2);
\draw[opacity=0.5, darkgray, ultra thick] (2)--(3);

\end{scope}

\begin{scope}[scale=0.4, xshift=250, yshift=-400]

\coordinate [] (3) at (2,1);
\coordinate [] (2) at (1,0);
\coordinate [] (1) at (0,1);
\coordinate [] (0) at (-1,0);

\draw[opacity=0.5, darkgray, ultra thick] (0)--(1);
\draw[opacity=0.5, darkgray, ultra thick] (1)--(3);

\end{scope}

\end{tikzpicture}
\end{center}
This is, of course, a simple instance of a \textbf{confluent} rewrite system. More specifically, we say that the zee hypergraph above is \textbf{vee-confluent} or \textbf{confluent under vee rewrites}. In terms of plex products, the vee-confluence of the zee plex diagram results in the ordinary axiom of \textbf{binary associativity} by writing the labels of composed plexes in brackets:
\begin{equation}\label{assoc}
    ((ab)c)=(a(bc)). \tag{assoc}
\end{equation}

Aiming to abstract the property of associativity for general plex products, confluence thus appears as a desirable necessary condition. However it cannot be sufficient as a property of general plex rewrite systems since all plex diagrams are confluent under sufficiently many rewrite rules, again, by construction, since every plex diagram evaluates to a single plex. This prompts us to find a refined notion of confluence that holds in the case at hand of binary composition of $2$-plexes but that is general enough to apply in arbitrary plex products. We note three further features of the rewrite system that produces binary associativity:
\begin{itemize}
    \item \textbf{Single motif.} The rewrite system is generated by a single rewrite rule defined by detection of a single motif (the vee).
    \item \textbf{Regularity.} The initial hypergraph (the zee) only contains plexes of order $2$. Furthermore, the initial hypergraph is built from piece-wise plex compositions so all intermediate rewrites are also comprised of plexes of order $2$. We can say that the rewrite system is order-regular both vertically and horizontally.
    \item \textbf{Overlap.} The motifs that occur in the initial hypergraph (the zee) have non-empty intersection.
\end{itemize}
A confluent rewrite system of these characteristics is called \textbf{concurrent}. We have thus built a template for a generalized notion of associativity: a plex product defined by some plex diagram (simple connected hypergraph) is called \textbf{associative} if there exists a concurrent rewrite system taking the plex product as motif.\newline

As an immediate consequence of this definition we find that the rewrite system that produces binary associativity is not unique. Indeed, any \textbf{chain} of $2$-plexes:
\begin{center}
\begin{tikzpicture}[line join = round, line cap = round]

\begin{scope}[xshift=0, yshift=0]

\coordinate [] (4) at (3,0);
\coordinate [] (3) at (2,1);
\coordinate [] (2) at (1,0);
\coordinate [] (1) at (0,1);
\coordinate [] (0) at (-1,0);

\draw[opacity=0.5, lightgray, ultra thick] (0)--(1);
\draw[opacity=0.5, lightgray, ultra thick] (1)--(2);
\draw[opacity=0.5, lightgray, ultra thick] (2)--(3);
\draw[opacity=0.5, lightgray, ultra thick, dashed] (4)--(3);

\filldraw[darkgray] (1) circle (0.07);
\filldraw[darkgray] (2) circle (0.07);
\filldraw[darkgray] (3) circle (0.07);

\end{scope}

\node[] at (3.9,0.5) {$\cdots$};

\begin{scope}[xshift=200, yshift=0]

\coordinate [] (4) at (-2,1);
\coordinate [] (3) at (2,1);
\coordinate [] (2) at (1,0);
\coordinate [] (1) at (0,1);
\coordinate [] (0) at (-1,0);

\draw[opacity=0.5, lightgray, ultra thick, dashed] (0)--(4);
\draw[opacity=0.5, lightgray, ultra thick] (0)--(1);
\draw[opacity=0.5, lightgray, ultra thick] (1)--(2);
\draw[opacity=0.5, lightgray, ultra thick] (2)--(3);

\filldraw[darkgray] (0) circle (0.07);
\filldraw[darkgray] (1) circle (0.07);
\filldraw[darkgray] (2) circle (0.07);

\end{scope}

\end{tikzpicture}
\end{center}
gives a concurrent rewrite system by taking the vee as motif. This simply corresponds to concatenation of binary operations. The symbolic manifestation of this concurrent rewrite system is the property that allows to denote a series of binary compositions by a sequence of symbols. More generally, given a chain of length $n$ as initial plex diagram and a chain of length $m<n$ as motif, the resulting rewrite system is concurrent. The symbolic axiom resulting from it:
\begin{equation*}
    ((a_1a_2\cdots a_m)a_{m+1}\cdots a_n)=(a_1(a_2\cdots a_ma_{m+1})\cdots a_n) = \cdots = (a_1a_2\cdots (a_{n-m}\cdots a_n))
\end{equation*}
is called \textbf{sequential associativity} or \textbf{chain associativity}. By choosing $n=3$ and $m=2$ we find ordinary binary associativity as a particular case of sequential associativity. 

\section{The Fish Product} \label{fish}

One of the central goals of the Higher Arity Project \cite{zapata2022invitation,collab2022arity} is to find algebraic structures that generalize associative binary operations. Plex algebra, articulated in Sections \ref{array} and \ref{plex}, offers a rich sprawling landscape of compositional structures and thus appears as a natural starting point in the search for higher arity operations.\newline

In Section \ref{confl} we showed how the binary composition of $2$-plexes
\begin{center}
\begin{tikzpicture}[line join = round, line cap = round]

\begin{scope}[xshift=0, yshift=0]

\coordinate [] (2) at (1,0);
\coordinate [] (1) at (0,1);
\coordinate [] (0) at (-1,0);

\draw[opacity=0.5, lightgray, ultra thick] (0)--(1);
\draw[opacity=0.5, lightgray, ultra thick] (1)--(2);

\node[] at (-0.8,0.6) {$a$};
\node[] at (0.8,0.65) {$b$};

\filldraw[darkgray] (1) circle (0.07);

\end{scope}

\begin{scope}[xshift=120, yshift=10]

\node[] at (-2,0) {$=$};

\coordinate [] (2) at (1,0);
\coordinate [] (1) at (0,1);
\coordinate [] (0) at (-1,0);

\draw[opacity=0.5, lightgray, ultra thick] (0)--(2);

\node[] at (0,0.4) {$ab$};

\end{scope}

\end{tikzpicture}
\end{center}
embodies binary associativity in the context of plex algebra. We further noted that this plex product of order $2$ plexes is, in fact, the smallest plex composition. This compels us to look for higher order plex compositions. At one order higher, this means looking for plex products that take a number of $3$-plexes as input and give a $3$-plex as output. Enumerating all such plex products is a simple combinatorial task. The smallest instances of $3$-plex compositions are ternary, i.e. they take three $3$-plexes as input and give a $3$-plex as output, and there are $10$ of them, as shown by the authors in \cite[10-18]{zapata2022invitation}. Among these, we shall focus on the following ternary plex composition:
\begin{center}
\begin{tikzpicture}[line join = round, line cap = round]

\begin{scope}[xshift=0, yshift=0]

\begin{scope}[scale=0.7, xshift=0, yshift=0]

\coordinate [] (6) at ({3*sqrt(3)},0);
\coordinate [] (5) at ({2*sqrt(3)},-1);
\coordinate [] (4) at ({2*sqrt(3)},1);
\coordinate [] (3) at ({sqrt(3)},0);
\coordinate [] (2) at (0,-1);
\coordinate [] (1) at (0,1);
\coordinate [] (0) at (0,0);

\draw[gray, opacity=0.5,fill=lightgray,fill opacity=0.5] (1)--(2)--(3)--cycle;
\draw[gray, opacity=0.5,fill=lightgray,fill opacity=0.5] (3)--(4)--(5)--cycle;
\draw[gray, opacity=0.5,fill=lightgray,fill opacity=0.5] (4)--(5)--(6)--cycle;

\end{scope}

\node[] at ({0.7*0.6},0) {$a$};
\node[] at ({0.7*(2*sqrt(3)-0.6)},0.05) {$b$};
\node[] at ({0.7*(2*sqrt(3)+0.6)},0) {$c$};

\filldraw[darkgray] (5) circle (0.07);
\filldraw[darkgray] (4) circle (0.07);
\filldraw[darkgray] (3) circle (0.07);

\end{scope}

\begin{scope}[xshift=155, yshift=0]

\node[] at (-0.9,0) {$=$};

\begin{scope}[scale=0.7, xshift=0, yshift=0]

\coordinate [] (6) at ({3*sqrt(3)},0);
\coordinate [] (5) at ({2*sqrt(3)},-1);
\coordinate [] (4) at ({2*sqrt(3)},1);
\coordinate [] (3) at ({sqrt(3)},0);
\coordinate [] (2) at (0,-1);
\coordinate [] (1) at (0,1);
\coordinate [] (0) at (0,0);

\draw[gray, opacity=0.5,fill=lightgray,fill opacity=0.5] (1)--(2)--(3)--cycle;

\end{scope}

\node[] at ({0.7*0.6},0.04) {$abc$};

\end{scope}

\end{tikzpicture}
\end{center}
The simple connected hypergraph in the LHS is given the evocative name `\textbf{fish}' and so this operation is called the \textbf{fish product} of $3$-plexes. Note that this symbolic labelling convention follows the same adjacency criteria we adopted for the labelling of the binary composition at the start of Section \ref{confl} where the incidence topology uniquely determines the sequential order of plex labels. This ternary composition stands out on the set of ternary $3$-plex compositions for its quasi-sequential nature. The $a$, $b$, $c$ plexes act as a manner of input, throughput and output in the sequential operation of pair-wise contraction of $3$-plexes. For later ease of reference, we adopt some suggestive diagram terminology: we call $a$ the \textbf{tail} plex, $b$ the \textbf{body} plex and $c$ the \textbf{head} plex. Note that only $a$ and $c$ carry free indices as all the indices of $b$ are contracted. The two free indices of the tail plex are called \textbf{tips} and the free index of the head plex is called \textbf{mouth}. We summarize the fish hypergraph anatomy as follows:
\begin{center}
\begin{tikzpicture}[line join = round, line cap = round]

\begin{scope}[xshift=0, yshift=0]

\begin{scope}[scale=1, xshift=0, yshift=0]

\coordinate [label=right:mouth] (6) at ({3*sqrt(3)},0);
\coordinate [] (5) at ({2*sqrt(3)},-1);
\coordinate [] (4) at ({2*sqrt(3)},1);
\coordinate [] (3) at ({sqrt(3)},0);
\coordinate [label=below left:tip$_2$] (2) at (0,-1);
\coordinate [label=above left:tip$_1$] (1) at (0,1);
\coordinate [] (0) at (0,0);

\draw[gray, opacity=0.5,fill=lightgray,fill opacity=0.5] (1)--(2)--(3)--cycle;
\draw[gray, opacity=0.5,fill=lightgray,fill opacity=0.5] (3)--(4)--(5)--cycle;
\draw[gray, opacity=0.5,fill=lightgray,fill opacity=0.5] (4)--(5)--(6)--cycle;

\end{scope}

\node[] at (0.5,0) {tail};
\node[] at (2.8,-0.03) {body};
\node[] at (4.1,0) {head};

\filldraw[darkgray] (5) circle (0.07);
\filldraw[darkgray] (4) circle (0.07);
\filldraw[darkgray] (3) circle (0.07);

\end{scope}

\end{tikzpicture}
\end{center}

We begin by stating our central result.

\begin{thm}[Fish Associativity] \label{fishassoc}
The fish product is associative in the sense defined in Section \ref{confl}, that is, there exists a concurrent plex rewrite system with the fish as motif. Furthermore, the (minimal) symbolic condition that derives from the confluence of rewrites is the defining axiom of semiheaps (\ref{semiheap}).
\end{thm}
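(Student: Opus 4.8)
The plan is to produce the minimal concurrent rewrite system explicitly and then read off the symbolic axiom it forces. The initial hypergraph is the \emph{five-fish chain} already drawn in the Introduction: five $3$-plexes $a,b,c,d,e$ glued so that consecutive plexes alternate between sharing a single vertex and sharing two vertices, leaving exactly three free vertices (two tips and one mouth) and six contracted vertices. The first thing I would record is that this hypergraph is order-regular in the sense of Section \ref{confl}: every plex in it has order $3$, and -- as the rewrites will confirm -- so does every plex produced by a fish rewrite, so the system never leaves the world of $3$-plexes.

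Next I would enumerate the fish subhypergraphs. Since the fish motif is asymmetric (tail and body share one vertex, body and head share two), a detection is pinned down by the choice of \emph{body} plex: the body must have one neighbour sharing a single vertex, which becomes the tail, and one neighbour sharing two vertices, which becomes the head. In the chain this singles out exactly three fish, indexed by body $\in\{b,c,d\}$. Taking $b$ as body reads tail-to-head as $(abc)$; taking $d$ as body reads $(cde)$; and taking $c$ as body the single-vertex (tail) neighbour is $d$ while the two-vertex (head) neighbour is $b$, so the tail-to-head reading is $(dcb)$ -- this reversal is the crux of the argument. These three detections pairwise overlap, sharing the plexes $\{c\}$, $\{b,c\}$ or $\{c,d\}$ respectively, which supplies the overlap condition for concurrency.

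I would then run the rewrites. A fish consumes three plexes and emits one, so the plex count drops $5\to 3\to 1$ and two rewrites terminate at a single $3$-plex. A short check shows each first rewrite leaves a three-plex configuration that is again a fish -- substituting $(abc)$ leaves $((abc)\,d\,e)$, the middle substitution leaves $(a\,(dcb)\,e)$, and substituting $(cde)$ leaves $(a\,b\,(cde))$, and in each case the incidence pattern (one shared vertex tail-to-body, two shared body-to-head) is preserved, giving regularity. Confluence itself is then automatic from the principle of Sections \ref{array}--\ref{confl} that every simple connected plex diagram evaluates to a single plex: all three branches compute the same fully contracted $3$-plex on the two tips and the mouth. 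The system is thus concurrent (single motif, order-regular, overlapping), which is the definition of fish associativity; and bracketing the inner fish in each penultimate plex yields the equalities
\begin{equation*}
    ((abc)de)=(a(dcb)e)=(ab(cde)),
\end{equation*}
which is exactly the semiheap axiom (\ref{semiheap}). Minimality is immediate: a single fish admits no overlapping second fish, and because (\ref{semiheap}) carries five free symbols no chain shorter than five plexes can support the threefold overlap.

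The step I expect to be the main obstacle is the orientation bookkeeping in the middle branch. Because the fish is not tail--head symmetric, one must track precisely which index of the body plex is contracted against the tail and which two against the head, matching the pattern $\sum_{pqr} a_{ijp}\,b_{qrp}\,c_{qrk}$ of (\ref{fishmultplication}); it is exactly this index matching that turns the geometric middle fish into the reversed term $(dcb)$ rather than $(bcd)$. Getting this reversal right, instead of silently assuming left-to-right order, is the delicate point on which the identification with the genuine semiheap axiom -- as opposed to a naive fully sequential axiom -- depends.
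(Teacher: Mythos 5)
Your proposal is correct and follows essentially the same route as the paper's proof: the long fish as initial diagram, the three overlapping fish detections, the tail-to-head reversal that turns the middle rewrite into $(dcb)$, confluence from the fact that any simple connected plex diagram evaluates to a single plex, and verification of the single-motif, order-regularity and overlap conditions for concurrency. Your explicit enumeration of detections by choice of body plex and your brief minimality remark are slight elaborations of what the paper leaves to ``direct inspection,'' not a different argument.
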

\begin{proof}
To prove this statement we need to find an initial plex diagram that is confluent under fish rewrites. More concretely, if we aim to recover the semiheap identity (\ref{semiheap}) the initial plex diagram must involve $5$ plexes that we shall label $a$, $b$, $c$, $d$ and $e$. By direct inspection of the fish product it is easy to conclude that the initial plex diagram must be the following simple connected hypergraph
\begin{center}
\begin{tikzpicture}[line join = round, line cap = round]

\begin{scope}[scale=0.7, xshift=0, yshift=0]

\coordinate [] (9) at ({5*sqrt(3)},0);
\coordinate [] (8) at ({4*sqrt(3)},-1);
\coordinate [] (7) at ({4*sqrt(3)},1);
\coordinate [] (6) at ({3*sqrt(3)},0);
\coordinate [] (5) at ({2*sqrt(3)},-1);
\coordinate [] (4) at ({2*sqrt(3)},1);
\coordinate [] (3) at ({sqrt(3)},0);
\coordinate [] (2) at (0,-1);
\coordinate [] (1) at (0,1);
\coordinate [] (0) at (0,0);

\draw[gray, opacity=0.5,fill=lightgray,fill opacity=0.5] (1)--(2)--(3)--cycle;
\draw[gray, opacity=0.5,fill=lightgray,fill opacity=0.5] (3)--(4)--(5)--cycle;
\draw[gray, opacity=0.5,fill=lightgray,fill opacity=0.5] (4)--(5)--(6)--cycle;
\draw[gray, opacity=0.5,fill=lightgray,fill opacity=0.5] (6)--(7)--(8)--cycle;
\draw[gray, opacity=0.5,fill=lightgray,fill opacity=0.5] (8)--(7)--(9)--cycle;

\end{scope}

\node[] at ({0.7*0.6},0) {$a$};
\node[] at ({0.7*(2*sqrt(3)-0.6)},0.05) {$b$};
\node[] at ({0.7*(2*sqrt(3)+0.6)},0) {$c$};
\node[] at ({0.7*(4*sqrt(3)-0.6)},0.05) {$d$};
\node[] at ({0.7*(4*sqrt(3)+0.6)},0) {$e$};

\filldraw[darkgray] (8) circle (0.07);
\filldraw[darkgray] (7) circle (0.07);
\filldraw[darkgray] (6) circle (0.07);
\filldraw[darkgray] (5) circle (0.07);
\filldraw[darkgray] (4) circle (0.07);
\filldraw[darkgray] (3) circle (0.07);

\end{tikzpicture}
\end{center}
which we call the \textbf{long fish} for later reference. Taking the fish plex diagram as a motif we obtain the following rewrite system:
\begin{center}
\begin{tikzpicture}[line join = round, line cap = round]

\begin{scope}[xshift=-45, yshift=30]

\begin{scope}[scale=0.7, xshift=0, yshift=0]

\coordinate [] (9) at ({5*sqrt(3)},0);
\coordinate [] (8) at ({4*sqrt(3)},-1);
\coordinate [] (7) at ({4*sqrt(3)},1);
\coordinate [] (6) at ({3*sqrt(3)},0);
\coordinate [] (5) at ({2*sqrt(3)},-1);
\coordinate [] (4) at ({2*sqrt(3)},1);
\coordinate [] (3) at ({sqrt(3)},0);
\coordinate [] (2) at (0,-1);
\coordinate [] (1) at (0,1);
\coordinate [] (0) at (0,0);

\draw[gray, opacity=0.5,fill=lightgray,fill opacity=0.5] (1)--(2)--(3)--cycle;
\draw[gray, opacity=0.5,fill=lightgray,fill opacity=0.5] (3)--(4)--(5)--cycle;
\draw[gray, opacity=0.5,fill=lightgray,fill opacity=0.5] (4)--(5)--(6)--cycle;
\draw[gray, opacity=0.5,fill=lightgray,fill opacity=0.5] (6)--(7)--(8)--cycle;
\draw[gray, opacity=0.5,fill=lightgray,fill opacity=0.5] (8)--(7)--(9)--cycle;

\end{scope}

\node[] at ({0.7*0.6},0) {$a$};
\node[] at ({0.7*(2*sqrt(3)-0.6)},0.05) {$b$};
\node[] at ({0.7*(2*sqrt(3)+0.6)},0) {$c$};
\node[] at ({0.7*(4*sqrt(3)-0.6)},0.05) {$d$};
\node[] at ({0.7*(4*sqrt(3)+0.6)},0) {$e$};

\filldraw[darkgray] (8) circle (0.07);
\filldraw[darkgray] (7) circle (0.07);
\filldraw[darkgray] (6) circle (0.07);
\filldraw[darkgray] (5) circle (0.07);
\filldraw[darkgray] (4) circle (0.07);
\filldraw[darkgray] (3) circle (0.07);

\end{scope}

\begin{scope}[xshift=-170, yshift=-91]

\begin{scope}[scale=0.7, xshift=0, yshift=0]

\coordinate [] (6) at ({3*sqrt(3)},0);
\coordinate [] (5) at ({2*sqrt(3)},-1);
\coordinate [] (4) at ({2*sqrt(3)},1);
\coordinate [] (3) at ({sqrt(3)},0);
\coordinate [] (2) at (0,-1);
\coordinate [] (1) at (0,1);
\coordinate [] (0) at (0,0);

\draw[gray, opacity=0.5,fill=lightgray,fill opacity=0.5] (1)--(2)--(3)--cycle;
\draw[gray, opacity=0.5,fill=lightgray,fill opacity=0.5] (3)--(4)--(5)--cycle;
\draw[gray, opacity=0.5,fill=lightgray,fill opacity=0.5] (4)--(5)--(6)--cycle;

\end{scope}

\node[] at ({0.7*0.6},0.05) {$abc$};
\node[] at ({0.7*(2*sqrt(3)-0.6)},0.05) {$d$};
\node[] at ({0.7*(2*sqrt(3)+0.6)},0) {$e$};

\filldraw[darkgray] (5) circle (0.07);
\filldraw[darkgray] (4) circle (0.07);
\filldraw[darkgray] (3) circle (0.07);

\end{scope}

\begin{scope}[xshift=-20, yshift=-91]

\begin{scope}[scale=0.7, xshift=0, yshift=0]

\coordinate [] (6) at ({3*sqrt(3)},0);
\coordinate [] (5) at ({2*sqrt(3)},-1);
\coordinate [] (4) at ({2*sqrt(3)},1);
\coordinate [] (3) at ({sqrt(3)},0);
\coordinate [] (2) at (0,-1);
\coordinate [] (1) at (0,1);
\coordinate [] (0) at (0,0);

\draw[gray, opacity=0.5,fill=lightgray,fill opacity=0.5] (1)--(2)--(3)--cycle;
\draw[gray, opacity=0.5,fill=lightgray,fill opacity=0.5] (3)--(4)--(5)--cycle;
\draw[gray, opacity=0.5,fill=lightgray,fill opacity=0.5] (4)--(5)--(6)--cycle;

\end{scope}

\node[] at ({0.7*0.6},0) {$a$};
\node[] at ({0.7*(2*sqrt(3)-0.6)},0.05) {$bcd$};
\node[] at ({0.7*(2*sqrt(3)+0.6)},0) {$e$};

\filldraw[darkgray] (5) circle (0.07);
\filldraw[darkgray] (4) circle (0.07);
\filldraw[darkgray] (3) circle (0.07);

\end{scope}

\begin{scope}[xshift=130, yshift=-91]

\begin{scope}[scale=0.7, xshift=0, yshift=0]

\coordinate [] (6) at ({3*sqrt(3)},0);
\coordinate [] (5) at ({2*sqrt(3)},-1);
\coordinate [] (4) at ({2*sqrt(3)},1);
\coordinate [] (3) at ({sqrt(3)},0);
\coordinate [] (2) at (0,-1);
\coordinate [] (1) at (0,1);
\coordinate [] (0) at (0,0);

\draw[gray, opacity=0.5,fill=lightgray,fill opacity=0.5] (1)--(2)--(3)--cycle;
\draw[gray, opacity=0.5,fill=lightgray,fill opacity=0.5] (3)--(4)--(5)--cycle;
\draw[gray, opacity=0.5,fill=lightgray,fill opacity=0.5] (4)--(5)--(6)--cycle;

\end{scope}

\node[] at ({0.7*0.6},0) {$a$};
\node[] at ({0.7*(2*sqrt(3)-0.6)},0.05) {$b$};
\node[] at ({0.7*(2*sqrt(3)+0.6)},0.05) {$cde$};

\filldraw[darkgray] (5) circle (0.07);
\filldraw[darkgray] (4) circle (0.07);
\filldraw[darkgray] (3) circle (0.07);

\end{scope}

\begin{scope}[xshift=20, yshift=-220]

\coordinate [] (6) at ({3*sqrt(3)},0);
\coordinate [] (5) at ({2*sqrt(3)},-1);
\coordinate [] (4) at ({2*sqrt(3)},1);
\coordinate [] (3) at ({sqrt(3)},0);
\coordinate [] (2) at (0,-1);
\coordinate [] (1) at (0,1);
\coordinate [] (0) at (0,0);

\draw[gray, opacity=0.5,fill=lightgray,fill opacity=0.5] (1)--(2)--(3)--cycle;

\node[] at (0.58,0.05) {$abcde$};

\end{scope}

\draw[-stealth, line width=1pt] (-1.5,-0.5) to[out=210, in=80] (-2.5,-2);
\draw[-stealth, line width=1pt] (4.2,-0.5) to[out=-30, in=100] (5.2,-2);
\draw[-stealth, line width=1pt] (-2.5,-4.5) to[out=-80, in=150] (-1.5,-6);
\draw[-stealth, line width=1pt] (5.2,-4.5) to[out=-100, in=30] (4.2,-6);
\draw[-stealth, line width=1pt] (1,-0.5) to (1,-2);
\draw[-stealth, line width=1pt] (1,-4.5) to (1,-6);

\begin{scope}[scale=0.15, xshift=-700, yshift=-150]

\coordinate [] (9) at ({5*sqrt(3)},0);
\coordinate [] (8) at ({4*sqrt(3)},-1);
\coordinate [] (7) at ({4*sqrt(3)},1);
\coordinate [] (6) at ({3*sqrt(3)},0);
\coordinate [] (5) at ({2*sqrt(3)},-1);
\coordinate [] (4) at ({2*sqrt(3)},1);
\coordinate [] (3) at ({sqrt(3)},0);
\coordinate [] (2) at (0,-1);
\coordinate [] (1) at (0,1);
\coordinate [] (0) at (0,0);

\draw[gray, opacity=0.5,fill=darkgray,fill opacity=0.5] (1)--(2)--(3)--cycle;
\draw[gray, opacity=0.5,fill=darkgray,fill opacity=0.5] (3)--(4)--(5)--cycle;
\draw[gray, opacity=0.5,fill=darkgray,fill opacity=0.5] (4)--(5)--(6)--cycle;
\draw[gray, opacity=0.5,fill=lightgray,fill opacity=0.5] (6)--(7)--(8)--cycle;
\draw[gray, opacity=0.5,fill=lightgray,fill opacity=0.5] (8)--(7)--(9)--cycle;

\end{scope}

\begin{scope}[scale=0.15, xshift=-700, yshift=-1000]

\coordinate [] (6) at ({3*sqrt(3)},0);
\coordinate [] (5) at ({2*sqrt(3)},-1);
\coordinate [] (4) at ({2*sqrt(3)},1);
\coordinate [] (3) at ({sqrt(3)},0);
\coordinate [] (2) at (0,-1);
\coordinate [] (1) at (0,1);
\coordinate [] (0) at (0,0);

\draw[gray, opacity=0.5,fill=darkgray,fill opacity=0.5] (1)--(2)--(3)--cycle;
\draw[gray, opacity=0.5,fill=darkgray,fill opacity=0.5] (3)--(4)--(5)--cycle;
\draw[gray, opacity=0.5,fill=darkgray,fill opacity=0.5] (4)--(5)--(6)--cycle;

\end{scope}

\begin{scope}[scale=0.15, xshift=270, yshift=-150]

\coordinate [] (9) at ({5*sqrt(3)},0);
\coordinate [] (8) at ({4*sqrt(3)},-1);
\coordinate [] (7) at ({4*sqrt(3)},1);
\coordinate [] (6) at ({3*sqrt(3)},0);
\coordinate [] (5) at ({2*sqrt(3)},-1);
\coordinate [] (4) at ({2*sqrt(3)},1);
\coordinate [] (3) at ({sqrt(3)},0);
\coordinate [] (2) at (0,-1);
\coordinate [] (1) at (0,1);
\coordinate [] (0) at (0,0);

\draw[gray, opacity=0.5,fill=lightgray,fill opacity=0.5] (1)--(2)--(3)--cycle;
\draw[gray, opacity=0.5,fill=darkgray,fill opacity=0.5] (3)--(4)--(5)--cycle;
\draw[gray, opacity=0.5,fill=darkgray,fill opacity=0.5] (4)--(5)--(6)--cycle;
\draw[gray, opacity=0.5,fill=darkgray,fill opacity=0.5] (6)--(7)--(8)--cycle;
\draw[gray, opacity=0.5,fill=lightgray,fill opacity=0.5] (8)--(7)--(9)--cycle;

\end{scope}

\begin{scope}[scale=0.15, xshift=270, yshift=-1000]

\coordinate [] (6) at ({3*sqrt(3)},0);
\coordinate [] (5) at ({2*sqrt(3)},-1);
\coordinate [] (4) at ({2*sqrt(3)},1);
\coordinate [] (3) at ({sqrt(3)},0);
\coordinate [] (2) at (0,-1);
\coordinate [] (1) at (0,1);
\coordinate [] (0) at (0,0);

\draw[gray, opacity=0.5,fill=darkgray,fill opacity=0.5] (1)--(2)--(3)--cycle;
\draw[gray, opacity=0.5,fill=darkgray,fill opacity=0.5] (3)--(4)--(5)--cycle;
\draw[gray, opacity=0.5,fill=darkgray,fill opacity=0.5] (4)--(5)--(6)--cycle;

\end{scope}

\begin{scope}[scale=0.15, xshift=1000, yshift=-150]

\coordinate [] (9) at ({5*sqrt(3)},0);
\coordinate [] (8) at ({4*sqrt(3)},-1);
\coordinate [] (7) at ({4*sqrt(3)},1);
\coordinate [] (6) at ({3*sqrt(3)},0);
\coordinate [] (5) at ({2*sqrt(3)},-1);
\coordinate [] (4) at ({2*sqrt(3)},1);
\coordinate [] (3) at ({sqrt(3)},0);
\coordinate [] (2) at (0,-1);
\coordinate [] (1) at (0,1);
\coordinate [] (0) at (0,0);

\draw[gray, opacity=0.5,fill=lightgray,fill opacity=0.5] (1)--(2)--(3)--cycle;
\draw[gray, opacity=0.5,fill=lightgray,fill opacity=0.5] (3)--(4)--(5)--cycle;
\draw[gray, opacity=0.5,fill=darkgray,fill opacity=0.5] (4)--(5)--(6)--cycle;
\draw[gray, opacity=0.5,fill=darkgray,fill opacity=0.5] (6)--(7)--(8)--cycle;
\draw[gray, opacity=0.5,fill=darkgray,fill opacity=0.5] (8)--(7)--(9)--cycle;

\end{scope}

\begin{scope}[scale=0.15, xshift=1000, yshift=-1000]

\coordinate [] (6) at ({3*sqrt(3)},0);
\coordinate [] (5) at ({2*sqrt(3)},-1);
\coordinate [] (4) at ({2*sqrt(3)},1);
\coordinate [] (3) at ({sqrt(3)},0);
\coordinate [] (2) at (0,-1);
\coordinate [] (1) at (0,1);
\coordinate [] (0) at (0,0);

\draw[gray, opacity=0.5,fill=darkgray,fill opacity=0.5] (1)--(2)--(3)--cycle;
\draw[gray, opacity=0.5,fill=darkgray,fill opacity=0.5] (3)--(4)--(5)--cycle;
\draw[gray, opacity=0.5,fill=darkgray,fill opacity=0.5] (4)--(5)--(6)--cycle;

\end{scope}

\end{tikzpicture}
\end{center}
Ignoring the labels for a moment, simply at the level of plex diagrams (hypergraphs), the long fish is clearly a confluent rewrite system with a single rewrite rule given by the fish product. Furthermore, plexes are of order $3$ throughout the rewrite system, hence it is an order-regular rewrite system, and the three fish motifs that can be detected on the long fish (initial hypergraph) have overlapping plexes. These are precisely the requirements we demanded for a concurrent rewrite system in Section \ref{confl}. Recall now that the labelling convention for the fish product has been chosen so that adjacency is preserved. This means that only the label placement `from-side-to-vertex' matters in the triangle representing the plex product, in correspondence with the sequential orientation of the tail-body-head of the fish plex diagram. A diagrammatic consequence of this labelling convention is the following:
\begin{center}
\begin{tikzpicture}[line join = round, line cap = round]

\begin{scope}[xshift=0, yshift=0]

\begin{scope}[rotate=180, scale=0.7, xshift=0, yshift=0]

\coordinate [] (6) at ({3*sqrt(3)},0);
\coordinate [] (5) at ({2*sqrt(3)},-1);
\coordinate [] (4) at ({2*sqrt(3)},1);
\coordinate [] (3) at ({sqrt(3)},0);
\coordinate [] (2) at (0,-1);
\coordinate [] (1) at (0,1);
\coordinate [] (0) at (0,0);

\draw[gray, opacity=0.5,fill=lightgray,fill opacity=0.5] (1)--(2)--(3)--cycle;

\end{scope}

\node[] at (-0.4,0.04) {$bcd$};

\end{scope}

\begin{scope}[xshift=50, yshift=0]

\node[] at (-0.9,0) {$=$};

\begin{scope}[scale=0.7, xshift=0, yshift=0]

\coordinate [] (6) at ({3*sqrt(3)},0);
\coordinate [] (5) at ({2*sqrt(3)},-1);
\coordinate [] (4) at ({2*sqrt(3)},1);
\coordinate [] (3) at ({sqrt(3)},0);
\coordinate [] (2) at (0,-1);
\coordinate [] (1) at (0,1);
\coordinate [] (0) at (0,0);

\draw[gray, opacity=0.5,fill=lightgray,fill opacity=0.5] (1)--(2)--(3)--cycle;

\end{scope}

\node[] at ({0.7*0.6},0.04) {$dcb$};

\end{scope}

\end{tikzpicture}
\end{center}
If we write the three labels of the plexes in the fish product in a sequential order
\begin{center}
\begin{tikzpicture}[line join = round, line cap = round]

\begin{scope}[xshift=0, yshift=0]

\begin{scope}[scale=0.7, xshift=0, yshift=0]

\coordinate [] (6) at ({3*sqrt(3)},0);
\coordinate [] (5) at ({2*sqrt(3)},-1);
\coordinate [] (4) at ({2*sqrt(3)},1);
\coordinate [] (3) at ({sqrt(3)},0);
\coordinate [] (2) at (0,-1);
\coordinate [] (1) at (0,1);
\coordinate [] (0) at (0,0);

\draw[gray, opacity=0.5,fill=lightgray,fill opacity=0.5] (1)--(2)--(3)--cycle;

\end{scope}

\node[] at ({0.7*0.6},0.04) {$abc$};

\end{scope}

\begin{scope}[xshift=85, yshift=0]

\node[] at (-0.9,0) {$\equiv$};

\node[] at ({0.7*0.6},0) {$(abc)$};

\end{scope}

\end{tikzpicture}
\end{center}
then, the fact that the three intermediate rewrite steps evaluate to the same final plex gives the desired identity:
\begin{equation*}
    ((abc)de)=(a(dcb)e)=(ab(cde)).
\end{equation*}

\end{proof}
When we consider the universe of index sets $\mathcal{U}$ and the set of $3$-plexes on constellations thereof, the notion of fish composition gives a ternary analogue of the composition operation in partial algebraic structures such as semigroupoids or categories. This will lead to the definition of what will be called a \textbf{semiheapoid} in Section \ref{heapoids}. In this view, objects are identified with index sets, the morphisms are identified with $3$-plexes and the analogue of the associativity axiom for composition of morphisms is the semiheap property (\ref{semiheap}) which holds for concatenations of an odd number of $3$-plexes forming the long fish pattern. This analogy is cemented by the following result.

\begin{prop}[Fish Semiheaps] \label{fishsemiheap}
Let three index sets $I, J, K\in \mathcal{U}$ and denote the set of all $3$-plexes on them by $A_{IJK}$, then $A_{IJK}$ is closed under the fish product and it inherits $6=|S_3|$ semiheap structures $\eta_{IJK}$ related by index permutations.
\end{prop}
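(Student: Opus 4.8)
The plan is to reduce the statement to the already-proven Fish Associativity theorem (Theorem~\ref{fishassoc}), isolating the only genuinely new content: the bookkeeping that keeps the fish product inside $A_{IJK}$ and the enumeration of the admissible sequentializations. First I would establish closure. Fix the sequentialization in which every $3$-plex of $A_{IJK}$ is represented by an array whose first, second and third slots carry the index sets $I$, $J$ and $K$ respectively. Writing $a,b,c \in A_{IJK}$ in this way, the fish product becomes
\[
(abc)_{ijk} = \sum_{p \in K}\sum_{q \in I}\sum_{r \in J} a_{ijp}\cdot b_{qrp}\cdot c_{qrk}.
\]
The conformability conditions hold because the contracted index $p$ lies in $K$ in both the third slot of $a$ and the third slot of $b$, while $q$ and $r$ lie in $I$ and $J$ in the first two slots of both $b$ and $c$. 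The surviving free indices are the tips $i \in I$, $j \in J$ and the mouth $k \in K$, so the output is once more an array on $I \times J \times K$ and hence a member of $A_{IJK}$.

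With closure in hand, the semiheap structure is inherited with no further computation. Theorem~\ref{fishassoc} establishes that the fish product is associative in the confluence sense and that the minimal symbolic identity it forces is exactly the semiheap axiom~(\ref{semiheap}). Because $A_{IJK}$ is closed, every iterated product occurring in $((abc)de)=(a(dcb)e)=(ab(cde))$ is itself a well-defined element of $A_{IJK}$, so this chain of equalities holds verbatim for $\eta_{IJK}(a,b,c):=(abc)$, making $(A_{IJK},\eta_{IJK})$ a semiheap. In particular the reversal in the middle term is supplied by Theorem~\ref{fishassoc} and needs no separate justification here.

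It remains to produce the $6=|S_3|$ structures. Since a plex carries no preferred ordering of its index sets, the array-level fish formula can only be applied after choosing a slot-to-set assignment, i.e.\ an element $\sigma \in S_3$; there are $|S_3|$ of these. The closure argument above runs identically for each $\sigma$: admissibility of the three contractions depends only on all of $a,b,c$ sharing one common assignment, and the resulting tips and mouth always exhaust $\{I,J,K\}$, so the output returns to $A_{IJK}$. Each $\eta_{IJK}^{\sigma}$ then satisfies~(\ref{semiheap}) by the same appeal to Theorem~\ref{fishassoc}, and the six operations are interrelated precisely through the reordering isomorphisms $\varphi_\sigma$ of Section~\ref{array}, which is the meaning of ``related by index permutations''.

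I expect the main obstacle to be purely combinatorial rather than algebraic: one must check carefully that the contraction conformability conditions are satisfied and that exactly $|S_3|$ consistent slot-to-set assignments exist, each returning its output to $A_{IJK}$. Once this index bookkeeping is settled the semiheap identity is not re-derived but imported wholesale, since the confluence argument of Theorem~\ref{fishassoc} is diagrammatic and wholly insensitive to which index sets label the vertices of the long fish.
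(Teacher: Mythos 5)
Your closure computation and your wholesale import of (\ref{semiheap}) from Theorem~\ref{fishassoc} are correct and agree with the paper's proof, which makes the same two moves (closure is checked there diagrammatically rather than in a chosen sequentialization, an immaterial difference). The genuine gap is in your enumeration of the six structures. You parametrize them by slot-to-set assignments $\sigma\in S_3$ feeding the fixed formula $(abc)_{ijk}=\sum_{pqr}a_{ijp}\cdot b_{qrp}\cdot c_{qrk}$, but distinct assignments do not yield distinct operations on plexes. The two internal contractions binding $b$ to $c$ are slotwise parallel ($q$ occupies slot $1$ of both, $r$ slot $2$ of both), so resequentializing all three plexes by the transposition of the first two slots merely relabels that dummy pair and reorders the free tips of the output plex: writing $\tilde a_{jik}:=a_{ijk}$, $\tilde b$, $\tilde c$ for the arrays in the assignment $(J,I,K)$, one has
\begin{equation*}
\sum_{p\in K}\sum_{q'\in J}\sum_{r'\in I}\tilde a_{jip}\cdot \tilde b_{q'r'p}\cdot \tilde c_{q'r'k}
\;=\;\sum_{p\in K}\sum_{q\in I}\sum_{r\in J} a_{ijp}\cdot b_{qrp}\cdot c_{qrk},
\end{equation*}
using $\tilde b_{q'r'p}=b_{r'q'p}$, $\tilde c_{q'r'k}=c_{r'q'k}$ and the renaming $(r',q')=(q,r)$. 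Hence the assignments $(I,J,K)$ and $(J,I,K)$ induce the very same map $A_{IJK}^{3}\to A_{IJK}$, your six assignments collapse pairwise to three operations --- one per choice of the index set placed in slot $3$ (the mouth) --- and the proposal exhibits only half of the claimed structures.

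The three operations you miss are the reversals, and no slot reassignment of your fixed formula can reach them: in that formula the tips always come from the first argument and the mouth from the third, whereas the reversed operation $(a,b,c)\mapsto (cba)$ draws its tips from the third argument, a generically different map. The paper's six arise from placing $I,J,K$ on the external vertices of the fish \emph{together with} the two orientations in which the diagram is read (its enumeration diagram attaches one operation to each direction of each dashed arrow): three internal-contraction topologies times two sequential directions, paired off by the relations $\eta_{IJK}=\overline{\eta}_{JIK}$, $\eta_{KIJ}=\overline{\eta}_{IKJ}$, $\eta_{JKI}=\overline{\eta}_{KJI}$ stated just after the proposition. To repair your argument, keep your three mouth choices and adjoin to each the reverse operation, invoking the fact recorded in Section~\ref{heaps} that the reverse of a semiheap operation is again a semiheap operation; that yields the six structures and the correct sense in which they are ``related by index permutations''.
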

\begin{proof}
This result follows from the observation that $3$-plexes on $3$ distinct indices can generically satisfy the conformability conditions of the fish product. This is easily seen diagrammatically, for instance, given the $3$-plexes $a, b, c \in A_{IJK}$ we can form the following fish product:
\begin{center}
\begin{tikzpicture}[line join = round, line cap = round]

\begin{scope}[scale=1.2, xshift=0, yshift=-10]

\coordinate [label=above:$K$] (2) at (0.5,{0.5*sqrt(3)});
\coordinate [label= below right:$J$] (1) at (1,0);
\coordinate [label=below left:$I$] (0) at (0,0);

\draw[lightgray, opacity=0.5,fill=lightgray,fill opacity=0.5] (1)--(0)--(2)--cycle;

\end{scope}

\begin{scope}[scale=1.2, xshift=60, yshift=-10]

\coordinate [label=above:$K$] (2) at (0.5,{0.5*sqrt(3)});
\coordinate [label= below right:$J$] (1) at (1,0);
\coordinate [label=below left:$I$] (0) at (0,0);

\draw[lightgray, opacity=0.5,fill=lightgray,fill opacity=0.5] (1)--(0)--(2)--cycle;

\end{scope}

\begin{scope}[scale=1.2, xshift=120, yshift=-10]

\coordinate [label=above:$K$] (2) at (0.5,{0.5*sqrt(3)});
\coordinate [label= below right:$J$] (1) at (1,0);
\coordinate [label=below left:$I$] (0) at (0,0);

\draw[lightgray, opacity=0.5,fill=lightgray,fill opacity=0.5] (1)--(0)--(2)--cycle;

\end{scope}

\begin{scope}[xshift=250, yshift=0]

\begin{scope}[scale=0.7, xshift=0, yshift=0]

\coordinate [label=right:$K$] (6) at ({3*sqrt(3)},0);
\coordinate [label=below:$J$] (5) at ({2*sqrt(3)},-1);
\coordinate [label=above:$I$] (4) at ({2*sqrt(3)},1);
\coordinate [label=above:$K$] (3) at ({sqrt(3)},0);
\coordinate [label=below left:$J$] (2) at (0,-1);
\coordinate [label=above left:$I$] (1) at (0,1);
\coordinate [] (0) at (0,0);

\draw[gray, opacity=0.5,fill=lightgray,fill opacity=0.5] (1)--(2)--(3)--cycle;
\draw[gray, opacity=0.5,fill=lightgray,fill opacity=0.5] (3)--(4)--(5)--cycle;
\draw[gray, opacity=0.5,fill=lightgray,fill opacity=0.5] (4)--(5)--(6)--cycle;

\end{scope}

\node[] at ({0.7*0.6},0) {$a$};
\node[] at ({0.7*(2*sqrt(3)-0.6)},0.05) {$b$};
\node[] at ({0.7*(2*sqrt(3)+0.6)},0) {$c$};

\filldraw[darkgray] (5) circle (0.07);
\filldraw[darkgray] (4) circle (0.07);
\filldraw[darkgray] (3) circle (0.07);

\end{scope}

\node[] at (0.6,0) {$a$};
\node[] at (3.15,0.05) {$b$};
\node[] at (5.65,0) {$c$};

\node[] at (7.51,0) {$\mapsto$};

\end{tikzpicture}
\end{center}
which is clearly again a $3$-plex on the same indices and thus an element of $A_{IJK}$. The ternary operation defined by this particular assignment of plexes is denoted as $\eta_{IJK}: A_{IJK}^3 \to A_{IJK}$. There are as many ternary operations of this kind on $A_{IJK}$ as there are permutations of the external indices in the fish plex diagram. In other words, we can define as many formally distinct ternary operations on $A_{IJK}$ as there are ways to place the index sets $I$, $J$, $K$ on the outer vertices of the fish hypergraph. These correspond to all the bijections of the form
\begin{equation*}
    \{I,J,K\} \to \{\text{tip}_1, \text{tip}_2, \text{mouth}\}.
\end{equation*}
and therefore there are a total of $|S_3|=6$. We can account for al the different compositional topologies resulting from the permutations of indices diagrammatically. A convenient way to enumerate the different fish products that are allowed is by writing all the possible fish diagrams centered on a single generic plex that acts as head or tail and progressively attach two more plexes to complete a fish outwards:
\begin{center}
\begin{tikzpicture}[line join = round, line cap = round]

\begin{scope}[rotate=-30, xshift=0, yshift=0]

\coordinate [label=below:$J$] (9) at ({5*sqrt(3)},0);
\coordinate [] (8) at ({4*sqrt(3)},-1);
\coordinate [] (7) at ({4*sqrt(3)},1);
\coordinate [] (6) at ({3*sqrt(3)},0);
\coordinate [] (5) at ({2*sqrt(3)},-1);
\coordinate [] (4) at ({2*sqrt(3)},1);
\coordinate [] (3) at ({sqrt(3)},0);
\coordinate [label=left:$I$] (2) at (0,-1);
\coordinate [label=above left:$K$] (1) at (0,1);
\coordinate [] (0) at (0,0);
\coordinate [label=above left:$\eta_{IKJ}$] (A) at ({-0.5*sqrt(3)},0);
\coordinate [label=below right:$\eta_{KIJ}$] (B) at ({5.5*sqrt(3)},0);

\draw[gray, opacity=0.5,fill=lightgray,fill opacity=0.5] (1)--(2)--(3)--cycle;
\draw[gray, opacity=0.5,fill=lightgray,fill opacity=0.5] (3)--(4)--(5)--cycle;
\draw[gray, opacity=0.5,fill=lightgray,fill opacity=0.5] (4)--(5)--(6)--cycle;
\draw[gray, opacity=0.5,fill=lightgray,fill opacity=0.5] (6)--(7)--(8)--cycle;
\draw[gray, opacity=0.5,fill=lightgray,fill opacity=0.5] (8)--(7)--(9)--cycle;

\draw[gray, thick, dashed, stealth-stealth] (B)--(A);

\end{scope}

\begin{scope}[rotate=90, xshift=-172.5, yshift=-99.5]

\coordinate [label=above right:$K$] (9) at ({5*sqrt(3)},0);
\coordinate [] (8) at ({4*sqrt(3)},-1);
\coordinate [] (7) at ({4*sqrt(3)},1);
\coordinate [] (6) at ({3*sqrt(3)},0);
\coordinate [] (5) at ({2*sqrt(3)},-1);
\coordinate [] (4) at ({2*sqrt(3)},1);
\coordinate [] (3) at ({sqrt(3)},0);
\coordinate [label=below right:$J$] (2) at (0,-1);
\coordinate [label=below left:$I$] (1) at (0,1);
\coordinate [] (0) at (0,0);
\coordinate [label=below:$\eta_{JIK}$] (A) at ({-0.5*sqrt(3)},0);
\coordinate [label=above:$\eta_{IJK}$] (B) at ({5.5*sqrt(3)},0);

\draw[gray, opacity=0.5,fill=lightgray,fill opacity=0.5] (1)--(2)--(3)--cycle;
\draw[gray, opacity=0.5,fill=lightgray,fill opacity=0.5] (3)--(4)--(5)--cycle;

\draw[gray, opacity=0.5,fill=lightgray,fill opacity=0.5] (6)--(7)--(8)--cycle;
\draw[gray, opacity=0.5,fill=lightgray,fill opacity=0.5] (8)--(7)--(9)--cycle;

\draw[gray, thick, dashed, stealth-stealth] (B)--(A);

\end{scope}

\begin{scope}[rotate=-150, xshift=-172.5, yshift=99.5]

\coordinate [label=above left:$I$] (9) at ({5*sqrt(3)},0);
\coordinate [] (8) at ({4*sqrt(3)},-1);
\coordinate [] (7) at ({4*sqrt(3)},1);
\coordinate [] (6) at ({3*sqrt(3)},0);
\coordinate [] (5) at ({2*sqrt(3)},-1);
\coordinate [] (4) at ({2*sqrt(3)},1);
\coordinate [] (3) at ({sqrt(3)},0);
\coordinate [label=above:$J$] (2) at (0,-1);
\coordinate [label=right:$K$] (1) at (0,1);
\coordinate [] (0) at (0,0);
\coordinate [label=above right:$\eta_{KJI}$] (A) at ({-0.5*sqrt(3)},0);
\coordinate [label=below left:$\eta_{JKI}$] (B) at ({5.5*sqrt(3)},0);

\draw[gray, opacity=0.5,fill=lightgray,fill opacity=0.5] (1)--(2)--(3)--cycle;
\draw[gray, opacity=0.5,fill=lightgray,fill opacity=0.5] (3)--(4)--(5)--cycle;

\draw[gray, opacity=0.5,fill=lightgray,fill opacity=0.5] (6)--(7)--(8)--cycle;
\draw[gray, opacity=0.5,fill=lightgray,fill opacity=0.5] (8)--(7)--(9)--cycle;

\draw[gray, thick, dashed, stealth-stealth] (B)--(A);

\end{scope}

\end{tikzpicture}
\end{center}
where we have notated the sufficient indices that determine the index labels of the remaining unspecified vertices of the diagram by the conformability constraints of the fish product. This diagram should be read as follows: start at the central plex $a$ and take one of the dashed arrows pointing outwards, then attach a second plex $b$ in the position of the first triangle that the dashed arrow intersects in the outwards direction, and, lastly, attach a third plex $c$ in the position of the triangle that dashed arrow next intersects in the outwards direction. It follows from Theorem \ref{fishassoc} that each of these ternary operations $\eta_{\cdot \cdot \cdot}$ satisfies the semiheap property (\ref{semiheap}) and thus we find $6$ generically distinct semiheap structures on $A_{IJK}$.
\end{proof}
An important observation that follows from Proposition \ref{fishsemiheap} is that the sequential order of plexes in a fish product can be taken head-to-tail or tail-to-head resulting in two formally distinct semiheap structures. However, the fact that they share the same underlying compositional topology, i.e. they are computed by the same plex diagram only read in one of two possible orientations, manifests as the fact that the two resulting semiheap structures are related by reversal. Indeed, following the notation above, it is easy to check that:
\begin{equation*}
    \eta_{IJK}=\overline{\eta}_{JIK} \qquad \eta_{KIJ}=\overline{\eta}_{IKJ} \qquad \eta_{JKI}=\overline{\eta}_{KJI}
\end{equation*}
Up to reversal, however, there are $3$ generically distinct semiheap structures on the set of $3$-plexes on a fixed constellation of $3$ indices which all differ by the topology of internally contracted indices in the fish diagram and thus cannot be related in any natural way:
\begin{equation*}
    \eta_{IJK} \neq \eta_{JKI} \neq \eta_{KIJ}.
\end{equation*}
If we further assume the regularity condition that two indices are equal, e.g. $I=J$, the internal contractions of the body plex with the head plex in the fish diagram are subject to twisting freedom as described in Section \ref{plex}. This implies that for each fish semiheap structure of the form $\eta_{JJK}$ there is another generically inequivalent semiheap structure $\eta_{JJK}'$ given by the twisting.\newline

The sequential nature of the fish product stems from the fact that it can be obtained as a concatenation of two binary plex operations:
\begin{center}
\begin{tikzpicture}[line join = round, line cap = round]

\begin{scope}[scale=1.2, xshift=0, yshift=-10]

\coordinate [] (2) at (0.5,{0.5*sqrt(3)});
\coordinate [] (1) at (1,0);
\coordinate [] (0) at (0,0);

\draw[lightgray, opacity=0.5,fill=lightgray,fill opacity=0.5] (1)--(0)--(2)--cycle;

\end{scope}

\begin{scope}[scale=1.2, xshift=40, yshift=-10]

\coordinate [] (2) at (0.5,{0.5*sqrt(3)});
\coordinate [] (1) at (1,0);
\coordinate [] (0) at (0,0);

\draw[lightgray, opacity=0.5,fill=lightgray,fill opacity=0.5] (1)--(0)--(2)--cycle;

\end{scope}

\begin{scope}[scale=1.2, xshift=80, yshift=-10]

\coordinate [] (2) at (0.5,{0.5*sqrt(3)});
\coordinate [] (1) at (1,0);
\coordinate [] (0) at (0,0);

\draw[lightgray, opacity=0.5,fill=lightgray,fill opacity=0.5] (1)--(0)--(2)--cycle;

\end{scope}

\begin{scope}[xshift=175, yshift=0]

\begin{scope}[scale=0.7, xshift=0, yshift=0]

\coordinate [] (6) at ({3*sqrt(3)},0);
\coordinate [] (5) at ({2*sqrt(3)},-1);
\coordinate [] (4) at ({2*sqrt(3)},1);
\coordinate [] (3) at ({sqrt(3)},0);
\coordinate [] (2) at (0,-1);
\coordinate [] (1) at (0,1);
\coordinate [] (0) at (0,0);

\draw[gray, opacity=0.5,fill=lightgray,fill opacity=0.5] (1)--(2)--(3)--cycle;
\draw[gray, opacity=0.5,fill=lightgray,fill opacity=0.5] (3)--(4)--(5)--cycle;

\end{scope}

\begin{scope}[scale=1.2, xshift=75, yshift=-10]

\coordinate [] (2) at (0.5,{0.5*sqrt(3)});
\coordinate [] (1) at (1,0);
\coordinate [] (0) at (0,0);

\draw[lightgray, opacity=0.5,fill=lightgray,fill opacity=0.5] (1)--(0)--(2)--cycle;

\end{scope}

\node[] at ({0.7*0.6},0) {$a$};
\node[] at ({0.7*(2*sqrt(3)-0.6)},0.05) {$b$};
\node[] at (3.75,0) {$c$};

\filldraw[darkgray] (3) circle (0.07);

\end{scope}

\begin{scope}[xshift=350, yshift=0]

\begin{scope}[scale=0.7, xshift=0, yshift=0]

\coordinate [] (6) at ({3*sqrt(3)},0);
\coordinate [] (5) at ({2*sqrt(3)},-1);
\coordinate [] (4) at ({2*sqrt(3)},1);
\coordinate [] (3) at ({sqrt(3)},0);
\coordinate [] (2) at (0,-1);
\coordinate [] (1) at (0,1);
\coordinate [] (0) at (0,0);

\draw[gray, opacity=0.5,fill=lightgray,fill opacity=0.5] (1)--(2)--(3)--cycle;
\draw[gray, opacity=0.5,fill=lightgray,fill opacity=0.5] (3)--(4)--(5)--cycle;
\draw[gray, opacity=0.5,fill=lightgray,fill opacity=0.5] (4)--(5)--(6)--cycle;

\end{scope}

\node[] at ({0.7*0.6},0) {$a$};
\node[] at ({0.7*(2*sqrt(3)-0.6)},0.05) {$b$};
\node[] at ({0.7*(2*sqrt(3)+0.6)},0) {$c$};

\filldraw[darkgray] (5) circle (0.07);
\filldraw[darkgray] (4) circle (0.07);
\filldraw[darkgray] (3) circle (0.07);

\end{scope}

\node[] at (0.6,0) {$a$};
\node[] at (2.3,0.05) {$b$};
\node[] at (3.95,0) {$c$};

\node[] at (5.4,0) {$\mapsto$};
\node[] at (11.4,0) {$\mapsto$};

\end{tikzpicture}
\end{center}
The binary products of $3$-plexes used in each step correspond to the following plex diagrams:
\begin{center}
\begin{tikzpicture}[line join = round, line cap = round]

\begin{scope}[xshift=0, yshift=0]

\begin{scope}[scale=0.7, xshift=0, yshift=0]

\coordinate [] (6) at ({3*sqrt(3)},0);
\coordinate [] (5) at ({2*sqrt(3)},-1);
\coordinate [] (4) at ({2*sqrt(3)},1);
\coordinate [] (3) at ({sqrt(3)},0);
\coordinate [] (2) at (0,-1);
\coordinate [] (1) at (0,1);
\coordinate [] (0) at (0,0);

\draw[gray, opacity=0.5,fill=lightgray,fill opacity=0.5] (1)--(2)--(3)--cycle;
\draw[gray, opacity=0.5,fill=lightgray,fill opacity=0.5] (3)--(4)--(5)--cycle;

\end{scope}

\filldraw[darkgray] (3) circle (0.07);

\end{scope}

\begin{scope}[xshift=130, yshift=0]

\begin{scope}[scale=0.7, xshift=0, yshift=0]

\coordinate [] (6) at ({3*sqrt(3)},0);
\coordinate [] (5) at ({2*sqrt(3)},-1);
\coordinate [] (4) at ({2*sqrt(3)},1);
\coordinate [] (3) at ({sqrt(3)},0);
\coordinate [] (2) at (0,-1);
\coordinate [] (1) at (0,1);
\coordinate [] (0) at (0,0);

\draw[gray, opacity=0.5,fill=lightgray,fill opacity=0.5] (3)--(4)--(5)--cycle;
\draw[gray, opacity=0.5,fill=lightgray,fill opacity=0.5] (4)--(5)--(6)--cycle;

\end{scope}

\filldraw[darkgray] (5) circle (0.07);
\filldraw[darkgray] (4) circle (0.07);

\end{scope}

\end{tikzpicture}
\end{center}
which we identify as the closest analogues of binary composition that can be defined for $3$-plexes. We make this idea precise via the index flattening operation: given a $3$-plex $a$ on indices $\{I,J,K\}$ we can consider the pair of indices $J,K$ as a multi-index $M$ so $a$ is recaptured in a $2$-plex $\underline{a}$ on indices $\{I,M\}$
\begin{center}
\begin{tikzpicture}[line join = round, line cap = round]

\begin{scope}[xshift=0, yshift=0]

\begin{scope}[rotate=180, scale=0.7, xshift=0, yshift=0]

\coordinate [] (6) at ({3*sqrt(3)},0);
\coordinate [] (5) at ({2*sqrt(3)},-1);
\coordinate [] (4) at ({2*sqrt(3)},1);
\coordinate [label=left:$I$] (3) at ({sqrt(3)},0);
\coordinate [label=above right:$J$] (2) at (0,-1);
\coordinate [label=below right:$K$] (1) at (0,1);
\coordinate [] (0) at (0,0);

\draw[gray, opacity=0.5,fill=lightgray,fill opacity=0.5] (1)--(2)--(3)--cycle;

\end{scope}

\node[] at (-0.4,0.04) {$a$};

\end{scope}

\begin{scope}[xshift=85, yshift=0]

\node[] at (-2,0) {$\leadsto$};

\coordinate [label=right:$M$] (2) at (1,0);
\coordinate [label=left:$I$] (0) at (-1,0);

\draw[lightgray, ultra thick] (0)--(2);

\node[] at (0,0.4) {$\underline{a}$};

\end{scope}

\end{tikzpicture}
\end{center}
Although this change is immaterial for the data of the plex $a$ itself, the compositional properties of the the flattened plex $\underline{a}$ are modified. While the $I$ index operates as usual, $M$ is a multi-index and thus it forces incidences and contractions on pairs of indices. To denote this compositional asymmetry it suffices to notationally distinguish the single index vertex from the multi-index vertex of the plex $\underline{a}$. This can be simply accomplished by indicating an orientation along the edge representing the flattened $2$-plex $\underline{a}$ pointing from the multi-index to the single index:
\begin{center}
\begin{tikzpicture}[line join = round, line cap = round]

\begin{scope}[xshift=0, yshift=0]

\begin{scope}[rotate=180, scale=0.7, xshift=0, yshift=0]

\coordinate [] (6) at ({3*sqrt(3)},0);
\coordinate [] (5) at ({2*sqrt(3)},-1);
\coordinate [] (4) at ({2*sqrt(3)},1);
\coordinate [label=left:$I$] (3) at ({sqrt(3)},0);
\coordinate [label=above right:$J$] (2) at (0,-1);
\coordinate [label=below right:$K$] (1) at (0,1);
\coordinate [] (0) at (0,0);

\draw[gray, opacity=0.5,fill=lightgray,fill opacity=0.5] (1)--(2)--(3)--cycle;

\end{scope}

\node[] at (-0.4,0.04) {$a$};

\end{scope}

\begin{scope}[xshift=85, yshift=0]

\node[] at (-2,0) {$\equiv$};

\coordinate [label=right:$M$] (2) at (1,0);
\coordinate [label=left:$I$] (0) at (-1,0);

\begin{scope}[very thick,decoration={markings, mark=at position 0.6 with {\arrow{latex}}}]

\draw[lightgray, ultra thick, postaction={decorate}] (2)--(0);

\end{scope}

\node[] at (0,0.4) {$\underline{a}$};

\end{scope}

\end{tikzpicture}
\end{center}
The two binary products of $3$-plexes above can now be rewritten in terms of flattened $2$-plexes
\begin{center}
\begin{tikzpicture}[line join = round, line cap = round]

\begin{scope}[xshift=0, yshift=0]

\begin{scope}[scale=0.7, xshift=0, yshift=0]

\coordinate [] (4) at ({2*sqrt(3)},0);
\coordinate [] (3) at ({sqrt(3)},0);
\coordinate [] (2) at (0,-1);
\coordinate [] (1) at (0,1);
\coordinate [] (0) at (0,0);

\begin{scope}[very thick,decoration={markings, mark=at position 0.6 with {\arrow{latex}}}]

\draw[lightgray, ultra thick, postaction={decorate}] (0)--(3);
\draw[lightgray, ultra thick, postaction={decorate}] (4)--(3);

\end{scope}

\end{scope}

\filldraw[darkgray] (3) circle (0.07);

\end{scope}

\begin{scope}[xshift=130, yshift=0]

\begin{scope}[scale=0.7, xshift=0, yshift=0]

\coordinate [] (4) at ({2*sqrt(3)},0);
\coordinate [] (3) at ({sqrt(3)},0);
\coordinate [] (2) at (0,-1);
\coordinate [] (1) at (0,1);
\coordinate [] (0) at (0,0);

\begin{scope}[very thick,decoration={markings, mark=at position 0.6 with {\arrow{latex}}}]

\draw[lightgray, ultra thick, postaction={decorate}] (3)--(0);
\draw[lightgray, ultra thick, postaction={decorate}] (3)--(4);

\end{scope}

\end{scope}

\filldraw[darkgray] (3) circle (0.07);

\end{scope}

\end{tikzpicture}
\end{center}
These non-circulatory diagrams represent the only allowed binary contractions of flattened $2$-plexes, which contrasts with the circulatory nature of usual oriented binary compositions, e.g. the composition of morphisms in a category \cite{leinster2014basic}. Notably, each of these operations does not give a well-defined composition of flattened $2$-plexes since depending on the site of the index contraction the outcome is a $4$-plex (left diagram) or a $2$-plex (right diagram). However, when considered in combination, they give a characterization of the fish product.

\begin{prop}[Flat Fish Product] \label{flatfish}
The fish product is the minimal (lowest arity) composition on flattened $2$-plexes.
\end{prop}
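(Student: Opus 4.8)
The plan is to make precise the notion of \emph{composition on flattened $2$-plexes} and then to bound the arity from below by a slot-count (parity) argument while realizing the minimum explicitly by the fish. First I would fix the setup established just above the statement: a flattened $2$-plex carries two distinguished vertices of different type — a single-index vertex and a multi-index vertex — and the orientation records which is which. A composition is then an order-preserving plex product whose output is again a flattened $2$-plex of the same single/multi type, equivalently an order-$3$ plex. The crucial structural input, already visible in the two oriented diagrams preceding the statement, is that conformability forces like-with-like matching: since the multi-index vertex is atomic (it stands for the pair $(J,K)$ treated as a unit), a vertex of multi type can only be made incident with or contracted against another multi-type vertex, and similarly for the single type. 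Consequently the only elementary binary contractions are the single–single contraction (left diagram) and the multi–multi contraction (right diagram).

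Next I would rule out arities $1$ and $2$. The only order-preserving unary operation is the identity, since any nontrivial self-contraction of a $3$-plex removes two slots and lands in order $1$. For two input $3$-plexes the total number of index slots is $6$, and every admissible move removes an \emph{even} number of them — two for a single–single contraction or a self-trace, four for a multi–multi contraction (each atomic multi-index being two actual slots). Hence the output order is always even and can never equal $3$. This is exactly the failure noted in the text: the single–single contraction leaves the two multi-vertices free (a $4$-plex) and the multi–multi contraction leaves the two single-vertices free (a $2$-plex), so neither binary operation returns a flattened $3$-plex, and no binary composition exists.

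Then I would exhibit the arity-$3$ composition and identify it with the fish. With three flattened $2$-plexes $\underline a,\underline b,\underline c$ there are six vertices, three of single type and three of multi type; to produce a connected diagram whose free vertices are exactly one single and one multi we must contract two of the single-vertices and two of the multi-vertices. A single–single contraction accounts for the former and a multi–multi contraction for the latter, and connectivity of all three edges forces the two contractions to share a common plex occupying the body position. Up to the index permutations catalogued in Proposition \ref{fishsemiheap} this configuration is unique, and tracing the free indices shows it is precisely the concatenation of the left and right binary diagrams, namely the fish product $(abc)_{ijk}=\sum_{pqr}a_{ijp}\,b_{qrp}\,c_{qrk}$. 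The slot count is consistent: the three inputs carry $9$ slots, the fish removes $6$ of them (two for the single–single contraction, four for the multi–multi contraction), and the output has order $3$. Combining the lower bound with this realization gives that the fish is the lowest-arity composition on flattened $2$-plexes.

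The main obstacle I anticipate is the bookkeeping needed to justify completeness. One must argue rigorously that the atomicity of the multi-index vertex admits no further binary contractions — in particular no \emph{partial} contraction of a single index against one slot of a multi-index — and that the connected arity-$3$ patterns producing a flattened $3$-plex collapse, up to the symmetries of Proposition \ref{fishsemiheap}, to the single fish configuration. Both points reduce to the like-with-like conformability constraint together with the even/odd slot-count, so the argument becomes purely combinatorial once that constraint is stated carefully.
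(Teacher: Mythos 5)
Your proposal is correct in substance, but it is considerably more systematic than what the paper actually does. The paper's proof of Proposition \ref{flatfish} is a one-line ``by construction'' argument: it displays the single diagrammatic equation showing that concatenating the two binary oriented contractions (single--single at the heads, multi--multi at the tails) closes up into one oriented edge, and it delegates minimality entirely to the observation in the preceding paragraph that each binary contraction alone fails to be order-preserving (the single--single contraction outputs a $4$-plex, the multi--multi a $2$-plex). You prove the same two facts but upgrade both: the lower bound becomes a parity slot-count ($3N-2s-4m=3$ forces $N$ odd, killing arity $2$ wholesale rather than by inspecting the two elementary diagrams), and the realization at arity $3$ comes with a uniqueness argument (with three single-type and three multi-type vertices, $(s,m)=(1,1)$ is forced, and connectivity forces the shared body plex), which the paper never states. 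What your route buys is an actual proof that \emph{no} binary contraction pattern works, including self-traces, and that the fish is the unique arity-$3$ configuration up to the permutations of Proposition \ref{fishsemiheap}; what the paper's route buys is brevity, at the cost of leaving ``minimal'' justified only for the two displayed binary diagrams.

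One caveat you should make explicit: your claim that ``every admissible move removes an even number of slots'' is true only if admissible moves are \emph{contractions}. Under the paper's literal definition in Section \ref{plex}, a plex composition is any order-preserving plex product, and incidence operations are allowed; a single--single incidence removes one slot (odd), and the double incidence of two flattened $2$-plexes on the same constellation is entry-wise multiplication --- a binary, order-preserving, type-respecting product. So as literally stated, both your parity step and the proposition itself tacitly restrict ``composition'' to contraction-based products in which the paired single/multi vertices are consumed. The paper makes the same tacit restriction (its preceding text speaks of ``the only allowed binary contractions''), so this is not a defect of your argument relative to the paper, but your write-up should state the restriction before invoking parity, since that is exactly where an incidence-type product would otherwise slip through.
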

\begin{proof}
This result follows simply by construction, as shown by the following diagrammatic equation:
\begin{center}
\begin{tikzpicture}[line join = round, line cap = round]

\begin{scope}[xshift=0, yshift=0]

\begin{scope}[scale=0.7, xshift=0, yshift=0]

\coordinate [] (5) at ({3*sqrt(3)},0);
\coordinate [] (4) at ({2*sqrt(3)},0);
\coordinate [] (3) at ({sqrt(3)},0);
\coordinate [] (2) at (0,-1);
\coordinate [] (1) at (0,1);
\coordinate [] (0) at (0,0);

\begin{scope}[very thick,decoration={markings, mark=at position 0.6 with {\arrow{latex}}}]

\draw[lightgray, ultra thick, postaction={decorate}] (0)--(3);
\draw[lightgray, ultra thick, postaction={decorate}] (4)--(3);
\draw[lightgray, ultra thick, postaction={decorate}] (4)--(5);

\end{scope}

\end{scope}

\filldraw[darkgray] (3) circle (0.07);
\filldraw[darkgray] (4) circle (0.07);

\end{scope}

\node[] at (5,0) {$=$};

\begin{scope}[xshift=180, yshift=0]

\begin{scope}[scale=0.7, xshift=0, yshift=0]

\coordinate [] (3) at ({sqrt(3)},0);
\coordinate [] (0) at (0,0);

\begin{scope}[very thick,decoration={markings, mark=at position 0.7 with {\arrow{latex}}}]

\draw[lightgray, ultra thick, postaction={decorate}] (0)--(3);

\end{scope}

\end{scope}

\end{scope}

\end{tikzpicture}
\end{center}
\end{proof}
The \emph{flattened perspective} on the fish product allows to regard $3$-plex semiheapoids as a partial algebraic structures articulated on directed graphs similarly to ordinary semigroupoids or categories. The key difference between semiheapoids and ordinary semigroupoids is the strictly non-circulatory nature of fish composition as shown in Proposition \ref{flatfish}. Furthermore, rephrasing Proposition \ref{fishsemiheap} in terms of the flat fish product allows to recover the semiheapoid analogue of the semiheap structure of dagger categories identified in the beginning of Section \ref{heaps}. To see this in detail, consider three flattened plexes $\underline{a}$, $\underline{b}$ and $\underline{c}$ defined on indices $I$ and $M$, where $M$ is a multi-index of two indices $J,K$. The conformability conditions of the fish product imply that the flattened plexes sit between the two indices according to the following non-circulatory diagram:
\begin{center}
\begin{tikzpicture}[line join = round, line cap = round]

\begin{scope}[scale=1.2, xshift=0, yshift=0]

\coordinate [label=right:$M$] (2) at (1,0);
\coordinate [label=left:$I$] (0) at (-1,0);

\begin{scope}[very thick,decoration={markings, mark=at position 0.6 with {\arrow{latex}}}]

\draw[lightgray, ultra thick, postaction={decorate}] (0) to[out=60, in=120] (2);
\draw[lightgray, ultra thick, postaction={decorate}] (2) to[out=180, in=0] (0);
\draw[lightgray, ultra thick, postaction={decorate}] (0) to[out=-60, in=-120] (2);

\end{scope}

\end{scope}

\node[] at (0,0.9) {$\underline{a}$};
\node[] at (0,0.3) {$\underline{b}$};
\node[] at (0,-0.3) {$\underline{c}$};

\end{tikzpicture}
\end{center}
This is indeed analogous to the commutative diagram that defines the semiheap structure on the morphism set of a dagger category (\ref{dag}). Note, however, that unlike the case of a dagger category, there is no involution operation in the fish product of flattened plexes. What the involution of a dagger category accomplishes for binary morphisms is enabled by the fact that plexes (before flattening) are defined on $3$ indices. The flat fish product appeared in the definition of pregroupoids in \cite{kock2007principal}.\newline

So far in this section we have shown how semiheapoids can be realized in a minimal way within plex algebra via the fish product. We now focus on the possibility of neutral elements for the fish product and the search for biunits in fish semiheaps. The general problem of the existence of neutral elements for the fish product can be formulated as follows: given an arbitrary $3$-plex $a$, are there canonical $3$-plexes $e$ and $e'$ such that they act as neutral elements in the fish product i.e. they satisfy some of the diagrammatic equations below?
\begin{center}
\begin{tikzpicture}[line join = round, line cap = round]

\begin{scope}[xshift=0, yshift=0]

\begin{scope}[scale=0.7, xshift=0, yshift=0]

\coordinate [] (6) at ({3*sqrt(3)},0);
\coordinate [] (5) at ({2*sqrt(3)},-1);
\coordinate [] (4) at ({2*sqrt(3)},1);
\coordinate [] (3) at ({sqrt(3)},0);
\coordinate [] (2) at (0,-1);
\coordinate [] (1) at (0,1);
\coordinate [] (0) at (0,0);

\draw[gray, opacity=0.5,fill=lightgray,fill opacity=0.5] (1)--(2)--(3)--cycle;
\draw[gray, opacity=0.5,fill=lightgray,fill opacity=0.5] (3)--(4)--(5)--cycle;
\draw[gray, opacity=0.5,fill=lightgray,fill opacity=0.5] (4)--(5)--(6)--cycle;

\end{scope}

\node[] at ({0.7*0.6},0) {$a$};
\node[] at ({0.7*(2*sqrt(3)-0.6)},0) {$e$};
\node[] at ({0.7*(2*sqrt(3)+0.6)},0.05) {$e'$};

\filldraw[darkgray] (5) circle (0.07);
\filldraw[darkgray] (4) circle (0.07);
\filldraw[darkgray] (3) circle (0.07);

\end{scope}

\begin{scope}[xshift=155, yshift=0]

\node[] at (-0.9,0) {$=$};

\begin{scope}[scale=0.7, xshift=0, yshift=0]

\coordinate [] (6) at ({3*sqrt(3)},0);
\coordinate [] (5) at ({2*sqrt(3)},-1);
\coordinate [] (4) at ({2*sqrt(3)},1);
\coordinate [] (3) at ({sqrt(3)},0);
\coordinate [] (2) at (0,-1);
\coordinate [] (1) at (0,1);
\coordinate [] (0) at (0,0);

\draw[gray, opacity=0.5,fill=lightgray,fill opacity=0.5] (1)--(2)--(3)--cycle;

\end{scope}

\node[] at ({0.7*0.6},0) {$a$};

\end{scope}

\end{tikzpicture}
\end{center}

\begin{center}
\begin{tikzpicture}[line join = round, line cap = round]

\begin{scope}[xshift=0, yshift=0]

\begin{scope}[scale=0.7, xshift=0, yshift=0]

\coordinate [] (6) at ({3*sqrt(3)},0);
\coordinate [] (5) at ({2*sqrt(3)},-1);
\coordinate [] (4) at ({2*sqrt(3)},1);
\coordinate [] (3) at ({sqrt(3)},0);
\coordinate [] (2) at (0,-1);
\coordinate [] (1) at (0,1);
\coordinate [] (0) at (0,0);

\draw[gray, opacity=0.5,fill=lightgray,fill opacity=0.5] (1)--(2)--(3)--cycle;
\draw[gray, opacity=0.5,fill=lightgray,fill opacity=0.5] (3)--(4)--(5)--cycle;
\draw[gray, opacity=0.5,fill=lightgray,fill opacity=0.5] (4)--(5)--(6)--cycle;

\end{scope}

\node[] at ({0.7*0.6},0) {$e$};
\node[] at ({0.7*(2*sqrt(3)-0.6)},0) {$a$};
\node[] at ({0.7*(2*sqrt(3)+0.6)},0.05) {$e'$};

\filldraw[darkgray] (5) circle (0.07);
\filldraw[darkgray] (4) circle (0.07);
\filldraw[darkgray] (3) circle (0.07);

\end{scope}

\begin{scope}[xshift=155, yshift=0]

\node[] at (-0.9,0) {$=$};

\begin{scope}[scale=0.7, xshift=0, yshift=0]

\coordinate [] (6) at ({3*sqrt(3)},0);
\coordinate [] (5) at ({2*sqrt(3)},-1);
\coordinate [] (4) at ({2*sqrt(3)},1);
\coordinate [] (3) at ({sqrt(3)},0);
\coordinate [] (2) at (0,-1);
\coordinate [] (1) at (0,1);
\coordinate [] (0) at (0,0);

\draw[gray, opacity=0.5,fill=lightgray,fill opacity=0.5] (1)--(2)--(3)--cycle;

\end{scope}

\node[] at ({0.7*0.6},0) {$a$};

\end{scope}

\end{tikzpicture}
\end{center}

\begin{center}
\begin{tikzpicture}[line join = round, line cap = round]

\begin{scope}[xshift=0, yshift=0]

\begin{scope}[scale=0.7, xshift=0, yshift=0]

\coordinate [] (6) at ({3*sqrt(3)},0);
\coordinate [] (5) at ({2*sqrt(3)},-1);
\coordinate [] (4) at ({2*sqrt(3)},1);
\coordinate [] (3) at ({sqrt(3)},0);
\coordinate [] (2) at (0,-1);
\coordinate [] (1) at (0,1);
\coordinate [] (0) at (0,0);

\draw[gray, opacity=0.5,fill=lightgray,fill opacity=0.5] (1)--(2)--(3)--cycle;
\draw[gray, opacity=0.5,fill=lightgray,fill opacity=0.5] (3)--(4)--(5)--cycle;
\draw[gray, opacity=0.5,fill=lightgray,fill opacity=0.5] (4)--(5)--(6)--cycle;

\end{scope}

\node[] at ({0.7*0.6},0) {$e$};
\node[] at ({0.7*(2*sqrt(3)-0.6)},0.05) {$e'$};
\node[] at ({0.7*(2*sqrt(3)+0.6)},0) {$a$};

\filldraw[darkgray] (5) circle (0.07);
\filldraw[darkgray] (4) circle (0.07);
\filldraw[darkgray] (3) circle (0.07);

\end{scope}

\begin{scope}[xshift=155, yshift=0]

\node[] at (-0.9,0) {$=$};

\begin{scope}[scale=0.7, xshift=0, yshift=0]

\coordinate [] (6) at ({3*sqrt(3)},0);
\coordinate [] (5) at ({2*sqrt(3)},-1);
\coordinate [] (4) at ({2*sqrt(3)},1);
\coordinate [] (3) at ({sqrt(3)},0);
\coordinate [] (2) at (0,-1);
\coordinate [] (1) at (0,1);
\coordinate [] (0) at (0,0);

\draw[gray, opacity=0.5,fill=lightgray,fill opacity=0.5] (1)--(2)--(3)--cycle;

\end{scope}

\node[] at ({0.7*0.6},0) {$a$};

\end{scope}

\end{tikzpicture}
\end{center}
Due to the asymmetric anatomy of the fish plex diagram each of the equations above has to be considered separately. In the most general situation, when $a$ is a $3$-plex on three distinct indices $I$, $J$, $K$, the canonical choices of $3$-plexes are of the following form:
\begin{center}
\begin{tikzpicture}[]

\begin{scope}[xshift=0, yshift=0]

\coordinate [label=above:$I$] (2a) at (0.5+0.03,{0.5*sqrt(3)});
\coordinate [] (2b) at (0.5-0.03,{0.5*sqrt(3)});
\coordinate [label=right:$I$] (1a) at ({1+0.03*0.5},{0.03*0.5*sqrt(3)});
\coordinate [] (1b) at ({1-0.03*0.5},{-0.03*0.5*sqrt(3)});
\coordinate [label=left:$I$] (0a) at ({0.03*0.5},{-0.03*0.5*sqrt(3)});
\coordinate [] (0b) at ({-0.03*0.5},{0.03*0.5*sqrt(3)});

\draw[lightgray, line width=1pt] (0b) to[out=30, in=270] (2b);
\draw[lightgray, line width=1pt] (2a) to[out=270, in=150] (1a);
\draw[lightgray, line width=1pt] (1b) to[out=150, in=30] (0a);

\node[] at (0.45,-1) {$3$-identity};

\end{scope}

\begin{scope}[xshift=100, yshift=0]

\coordinate [label=above:$J$] (2) at (0.5,{0.5*sqrt(3)});
\coordinate [label=right:$I$] (1) at (1,0);
\coordinate [label=left:$I$] (0) at (0,0);

\draw[lightgray, dashed] (1)--(0)--(2)--cycle;
\draw[lightgray, line width=1pt, double] (1)--(0);

\node[] at (0.45,-1) {broadened $2$-identity};

\end{scope}

\begin{scope}[xshift=220, yshift=0]

\coordinate [label=above:$K$] (2) at (0.5,{0.5*sqrt(3)});
\coordinate [label=right:$J$] (1) at (1,0);
\coordinate [label=left:$I$] (0) at (0,0);

\draw[lightgray, opacity=0.5,fill=lightgray,fill opacity=0.5] (1)--(0)--(2)--cycle;

\node[] at (0.5,0.3) {$0$};

\node[] at (0.45,-1) {empty $3$-plex};

\end{scope}

\begin{scope}[xshift=330, yshift=0]

\coordinate [label=above:$K$] (2) at (0.5,{0.5*sqrt(3)});
\coordinate [label=right:$J$] (1) at (1,0);
\coordinate [label=left:$I$] (0) at (0,0);

\draw[lightgray, opacity=0.5,fill=lightgray,fill opacity=0.5] (1)--(0)--(2)--cycle;

\node[] at (0.5,0.3) {$1$};

\node[] at (0.45,-1) {full $3$-plex};

\end{scope}

\end{tikzpicture}
\end{center}
The empty $3$-plex trivially acts as an absorbent element, as in any plex product, so it can be discarded as a candidate for neutral element. Contractions with the full $3$-plex are equivalent to self-contractions and thus they cannot act as neutral elements for the fish product if they are directly contracted with $a$. This leaves the $3$-identity and the broadened $2$-identities as the candidates for fish neutral elements. We proceed case by case:
\begin{itemize}
    \item $a$ as tail. This diagrammatic equation is equivalent to the condition that the body $e$ and the head $e'$ contract to the $2$-identity plex
    \begin{center}
\begin{tikzpicture}[]

\begin{scope}[xshift=0, yshift=0]

\begin{scope}[scale=0.7, xshift=0, yshift=0]

\coordinate [] (6) at ({3*sqrt(3)},0);
\coordinate [] (5) at ({2*sqrt(3)},-1);
\coordinate [] (4) at ({2*sqrt(3)},1);
\coordinate [] (3) at ({sqrt(3)},0);
\coordinate [] (2) at (0,-1);
\coordinate [] (1) at (0,1);
\coordinate [] (0) at (0,0);

\draw[gray, opacity=0.5,fill=lightgray,fill opacity=0.5] (3)--(4)--(5)--cycle;
\draw[gray, opacity=0.5,fill=lightgray,fill opacity=0.5] (4)--(5)--(6)--cycle;

\end{scope}

\filldraw[darkgray] (5) circle (0.07);
\filldraw[darkgray] (4) circle (0.07);

\node[] at ({0.7*(2*sqrt(3)-0.6)},0) {$e$};
\node[] at ({0.7*(2*sqrt(3)+0.6)},0.05) {$e'$};

\end{scope}

\begin{scope}[xshift=155, yshift=0]

\node[] at (-0.9,0) {$=$};

\begin{scope}[scale=0.7, xshift=0, yshift=0]

\coordinate [] (6) at ({3*sqrt(3)},0);
\coordinate [] (5) at ({2*sqrt(3)},-1);
\coordinate [] (4) at ({2*sqrt(3)},1);
\coordinate [] (3) at ({sqrt(3)},0);
\coordinate [] (2) at (0,-1);
\coordinate [] (1) at (0,1);
\coordinate [] (0) at (0,0);

\draw[lightgray, line width=1pt, double] (0)--(3);

\end{scope}

\end{scope}

\end{tikzpicture}
\end{center}
this can be accomplished in two ways: with two copies of the $3$-identity
\begin{center}
\begin{tikzpicture}[]

\begin{scope}[xshift=0, yshift=0]

\begin{scope}[scale=1.3, rotate=-30, xshift=24, yshift=14]

\coordinate [] (2a) at (0.5+0.03,{0.5*sqrt(3)});
\coordinate [] (2b) at (0.5-0.03,{0.5*sqrt(3)});
\coordinate [] (1a) at ({1+0.03*0.5},{0.03*0.5*sqrt(3)});
\coordinate [] (1b) at ({1-0.03*0.5},{-0.03*0.5*sqrt(3)});
\coordinate [] (0a) at ({0.03*0.5},{-0.03*0.5*sqrt(3)});
\coordinate [] (0b) at ({-0.03*0.5},{0.03*0.5*sqrt(3)});

\draw[lightgray, line width=1pt] (0b) to[out=30, in=270] (2b);
\draw[lightgray, line width=1pt] (2a) to[out=270, in=150] (1a);
\draw[lightgray, line width=1pt] (1b) to[out=150, in=30] (0a);

\end{scope}

\begin{scope}[scale=1.3, rotate=30, xshift=39, yshift=-39.5]

\coordinate [] (2a) at (0.5+0.03,{0.5*sqrt(3)});
\coordinate [] (2b) at (0.5-0.03,{0.5*sqrt(3)});
\coordinate [] (1a) at ({1+0.03*0.5},{0.03*0.5*sqrt(3)});
\coordinate [] (1b) at ({1-0.03*0.5},{-0.03*0.5*sqrt(3)});
\coordinate [] (0a) at ({0.03*0.5},{-0.03*0.5*sqrt(3)});
\coordinate [] (0b) at ({-0.03*0.5},{0.03*0.5*sqrt(3)});

\draw[lightgray, line width=1pt] (0b) to[out=30, in=270] (2b);
\draw[lightgray, line width=1pt] (2a) to[out=270, in=150] (1a);
\draw[lightgray, line width=1pt] (1b) to[out=150, in=30] (0a);

\end{scope}

\begin{scope}[scale=0.7, xshift=0, yshift=0]

\coordinate [] (6) at ({3*sqrt(3)},0);
\coordinate [] (5) at ({2*sqrt(3)},-1);
\coordinate [] (4) at ({2*sqrt(3)},1);
\coordinate [] (3) at ({sqrt(3)},0);
\coordinate [] (2) at (0,-1);
\coordinate [] (1) at (0,1);
\coordinate [] (0) at (0,0);

\end{scope}

\filldraw[darkgray] (5) circle (0.07);
\filldraw[darkgray] (4) circle (0.07);

\end{scope}

\begin{scope}[xshift=155, yshift=0]

\node[] at (-0.9,0) {$=$};

\begin{scope}[scale=0.7, xshift=0, yshift=0]

\coordinate [] (6) at ({3*sqrt(3)},0);
\coordinate [] (5) at ({2*sqrt(3)},-1);
\coordinate [] (4) at ({2*sqrt(3)},1);
\coordinate [] (3) at ({sqrt(3)},0);
\coordinate [] (2) at (0,-1);
\coordinate [] (1) at (0,1);
\coordinate [] (0) at (0,0);

\draw[lightgray, line width=1pt, double] (0)--(3);

\end{scope}

\end{scope}

\end{tikzpicture}
\end{center}
or with a $3$-identity and a broadened $2$-identity contracted at external indices:
\begin{center}
\begin{tikzpicture}[]

\begin{scope}[xshift=0, yshift=0]

\begin{scope}[scale=1.3, rotate=-30, xshift=24, yshift=14]

\coordinate [] (2a) at (0.5+0.03,{0.5*sqrt(3)});
\coordinate [] (2b) at (0.5-0.03,{0.5*sqrt(3)});
\coordinate [] (1a) at ({1+0.03*0.5},{0.03*0.5*sqrt(3)});
\coordinate [] (1b) at ({1-0.03*0.5},{-0.03*0.5*sqrt(3)});
\coordinate [] (0a) at ({0.03*0.5},{-0.03*0.5*sqrt(3)});
\coordinate [] (0b) at ({-0.03*0.5},{0.03*0.5*sqrt(3)});

\draw[lightgray, line width=1pt] (0b) to[out=30, in=270] (2b);
\draw[lightgray, line width=1pt] (2a) to[out=270, in=150] (1a);
\draw[lightgray, line width=1pt] (1b) to[out=150, in=30] (0a);

\end{scope}

\begin{scope}[scale=0.7, xshift=0, yshift=0]

\coordinate [] (6) at ({3*sqrt(3)},0);
\coordinate [] (5) at ({2*sqrt(3)},-1);
\coordinate [] (4) at ({2*sqrt(3)},1);
\coordinate [] (3) at ({sqrt(3)},0);
\coordinate [] (2) at (0,-1);
\coordinate [] (1) at (0,1);
\coordinate [] (0) at (0,0);

\draw[lightgray, dashed] (4)--(5)--(6)--cycle;
\draw[lightgray, line width=1pt, double] (5)--(6);

\end{scope}

\filldraw[darkgray] (5) circle (0.07);
\filldraw[darkgray] (4) circle (0.07);

\end{scope}

\begin{scope}[xshift=155, yshift=0]

\node[] at (-0.9,0) {$=$};

\begin{scope}[scale=0.7, xshift=0, yshift=0]

\coordinate [] (6) at ({3*sqrt(3)},0);
\coordinate [] (5) at ({2*sqrt(3)},-1);
\coordinate [] (4) at ({2*sqrt(3)},1);
\coordinate [] (3) at ({sqrt(3)},0);
\coordinate [] (2) at (0,-1);
\coordinate [] (1) at (0,1);
\coordinate [] (0) at (0,0);

\draw[lightgray, line width=1pt, double] (0)--(3);

\end{scope}

\end{scope}

\end{tikzpicture}
\end{center}
    
    \item $a$ as body. Contractions of $a$ with any of the candidate $3$-plexes alters it in general via self-contractions, therefore there are no canonical choices of $e$ and $e'$ that can make the diagrammatic equation hold.
    \item $a$ as head. The contraction of the tail $e$ and the body $e'$ is a $4$-plex that must contract with $a$ on two indices to give back $a$. The only $4$-plex that contracts in this manner is the tensor product of $2$-identities:
    \begin{center}
\begin{tikzpicture}[]

\begin{scope}[xshift=0, yshift=0]

\begin{scope}[scale=0.7, xshift=0, yshift=0]

\begin{scope}[xshift=0, yshift=0]

\draw[rounded corners] ({sqrt(3)-0.1},-1.2) rectangle ({2*sqrt(3)},1.2);

\draw[rounded corners] ({sqrt(3)-0.1},-0.3)--({sqrt(3)-0.7},-0.3)--({sqrt(3)-0.7},0.3)--({sqrt(3)-0.1},0.3);

\node[] at ({sqrt(3)-0.38},0) {$\otimes$};

\end{scope}

\coordinate [] (6) at ({3*sqrt(3)},0);
\coordinate [] (5) at ({2*sqrt(3)},-1);
\coordinate [] (4) at ({2*sqrt(3)},1);
\coordinate [] (3) at ({sqrt(3)},0);
\coordinate [] (2) at ({sqrt(3)},-1);
\coordinate [] (1) at ({sqrt(3)},1);
\coordinate [] (0) at (0,0);

\draw[lightgray, line width=1pt, double] (1)--(4);
\draw[lightgray, line width=1pt, double] (2)--(5);
\draw[gray, opacity=0.5,fill=lightgray,fill opacity=0.5] (4)--(5)--(6)--cycle;

\end{scope}

\node[] at ({0.7*(2*sqrt(3)+0.6)},0) {$a$};

\filldraw[darkgray] (5) circle (0.07);
\filldraw[darkgray] (4) circle (0.07);

\end{scope}

\begin{scope}[xshift=110, yshift=0]

\node[] at (0.5,0) {$=$};

\begin{scope}[scale=0.7, xshift=0, yshift=0]

\coordinate [] (6) at ({3*sqrt(3)},0);
\coordinate [] (5) at ({2*sqrt(3)},-1);
\coordinate [] (4) at ({2*sqrt(3)},1);
\coordinate [] (3) at ({sqrt(3)},0);
\coordinate [] (2) at ({sqrt(3)},-1);
\coordinate [] (1) at ({sqrt(3)},1);
\coordinate [] (0) at (0,0);

\draw[lightgray, line width=1pt, double] (1)--(4);
\draw[lightgray, line width=1pt, double] (2)--(5);
\draw[gray, opacity=0.5,fill=lightgray,fill opacity=0.5] (4)--(5)--(6)--cycle;

\end{scope}

\node[] at ({0.7*(2*sqrt(3)+0.6)},0) {$a$};

\filldraw[darkgray] (5) circle (0.07);
\filldraw[darkgray] (4) circle (0.07);

\end{scope}

\begin{scope}[xshift=270, yshift=0]

\node[] at (-0.9,0) {$=$};

\begin{scope}[scale=0.7, xshift=0, yshift=0]

\coordinate [] (6) at ({3*sqrt(3)},0);
\coordinate [] (5) at ({2*sqrt(3)},-1);
\coordinate [] (4) at ({2*sqrt(3)},1);
\coordinate [] (3) at ({sqrt(3)},0);
\coordinate [] (2) at (0,-1);
\coordinate [] (1) at (0,1);
\coordinate [] (0) at (0,0);

\draw[gray, opacity=0.5,fill=lightgray,fill opacity=0.5] (1)--(2)--(3)--cycle;

\end{scope}

\node[] at ({0.7*0.6},0) {$a$};

\end{scope}

\end{tikzpicture}
\end{center}
However, the $4$-plex must be obtained from the contraction of the tail $e$ and body $e'$ via one common index, which is easily checked to be impossible for any combinations of broadened $2$-identities and $3$-identities. 
\end{itemize}
These observations lead to the following result.
\begin{prop}[Fish Units] \label{fishunits}
The $3$-identity plexes and broadened $2$-identity plexes act as neutral elements when placed in body and head positions in the fish product.
\end{prop}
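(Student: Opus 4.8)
The plan is to reduce the proposition to a single array-level identity and then verify it by the identity-array calculus already established in Section \ref{array}. Place the candidate neutral elements $e$ in the body slot and $e'$ in the head slot, with a generic $3$-plex $a$ as the tail. Choosing the sequentialization of the fish formula \eqref{fishmultplication}, the fish product reads $(aee')_{ijk}=\sum_{pqr} a_{ijp}\,e_{qrp}\,e'_{qrk}$. The tips $i,j$ and the mouth $k$ are the only free indices, and $a$ enters only through its tail-link index $p$. Hence the condition $(aee')=a$ holds for \emph{every} tail $a$ precisely when the body--head contraction collapses to the $2$-identity, i.e.
\[
    \sum_{qr} e_{qrp}\,e'_{qrk}=\delta_2(p,k),
\]
since $\sum_p a_{ijp}\,\delta_2(p,k)=a_{ijk}$ by the neutrality of $\delta_2$ under contraction. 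This is exactly the reduced diagrammatic equation displayed in the ``$a$ as tail'' case above, so the whole statement rests on exhibiting identity-type solutions of this one equation.

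Next I would verify the two advertised solutions directly. For the first, take $e=e'=\delta_3$: using that identity arrays form a contraction subalgebra (Section \ref{array}), $\sum_{qr}\delta_3(q,r,p)\,\delta_3(q,r,k)$ is nonzero only when $q=r=p$ and $q=r=k$ simultaneously, i.e. only for $p=k$, giving $\delta_2(p,k)$ as required. For the second, keep the body $e=\delta_3$ and take the head $e'$ to be a broadened $2$-identity, broadened along one contraction index and carrying $\delta_2$ on the remaining contraction index and the mouth, so $e'_{qrk}=\delta_2(r,k)$. Then $\sum_{qr}\delta_3(q,r,p)\,\delta_2(r,k)$ forces $q=r=p$ from the first factor and $r=k$ from the second, again collapsing to $\delta_2(p,k)$. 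The symmetric variant, with a broadened $2$-identity in the body (carrying $\delta_2(q,p)$, broadened over $r$) and $\delta_3$ in the head, is checked identically. This establishes that the $3$-identity and broadened $2$-identity plexes serve as the neutral elements in the body and head positions.

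The main thing to get right --- and the step I expect to carry the real content --- is the bookkeeping of which index is broadened versus contracted in the second solution. Broadening inserts a projection, and if one broadens along an index that is \emph{not} subsequently pinned by a Kronecker constraint, the contraction produces a full (Hadamard) factor rather than a Kronecker delta, and neutrality fails. The verification therefore hinges on broadening along one of the two contracted indices $q,r$ while letting $\delta_2$ tie the other contracted index to the mouth $k$; this is what makes the double sum telescope to $\delta_2(p,k)$. Finally, I would note for context that this ``tail'' configuration is the only one admitting such neutral elements: placing $a$ in the body or head position forces a contraction of $a$ against an identity or full plex along a single shared index, which acts as a self-contraction and generically alters $a$, so no canonical $e,e'$ can restore it --- matching the negative ``$a$ as body'' and ``$a$ as head'' cases in the preceding analysis.
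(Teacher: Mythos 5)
Your proof is correct and takes essentially the same route as the paper: the reduction to the single body--head condition $\sum_{qr} e_{qrp}\, e'_{qrk} = \delta_2(p,k)$ is exactly the paper's ``$a$ as tail'' analysis preceding the proposition, and your Kronecker verifications coincide with the sequentialized array identities $(att)_{ijk} = (aut)_{ijk} = (atu)_{ijk} = a_{ijk}$ that the paper itself records right after the proposition, of which its diagrammatic proof (expanding identities on an index) is the plex-level rendering. Your caveat about broadening along an index not pinned by a Kronecker constraint --- which would produce a Hadamard factor and break neutrality --- correctly reproduces the paper's exclusion of the two-broadened-$2$-identities configuration.
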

\begin{proof}
Consider an arbitrary $3$-plex defined on generic indices $I$, $J$, $K$, then we find the following diagrammatic equations by expanding identities on any of the indices:
\begin{center}
\begin{tikzpicture}[line join = round, line cap = round]

\begin{scope}[xshift=0, yshift=0]

\begin{scope}[scale=1.3, rotate=-30, xshift=24, yshift=14]

\coordinate [] (2a) at (0.5+0.03,{0.5*sqrt(3)});
\coordinate [] (2b) at (0.5-0.03,{0.5*sqrt(3)});
\coordinate [] (1a) at ({1+0.03*0.5},{0.03*0.5*sqrt(3)});
\coordinate [] (1b) at ({1-0.03*0.5},{-0.03*0.5*sqrt(3)});
\coordinate [] (0a) at ({0.03*0.5},{-0.03*0.5*sqrt(3)});
\coordinate [] (0b) at ({-0.03*0.5},{0.03*0.5*sqrt(3)});

\draw[lightgray, line width=1pt] (0b) to[out=30, in=270] (2b);
\draw[lightgray, line width=1pt] (2a) to[out=270, in=150] (1a);
\draw[lightgray, line width=1pt] (1b) to[out=150, in=30] (0a);

\end{scope}

\begin{scope}[scale=1.3, rotate=30, xshift=39, yshift=-39.5]

\coordinate [] (2a) at (0.5+0.03,{0.5*sqrt(3)});
\coordinate [] (2b) at (0.5-0.03,{0.5*sqrt(3)});
\coordinate [] (1a) at ({1+0.03*0.5},{0.03*0.5*sqrt(3)});
\coordinate [] (1b) at ({1-0.03*0.5},{-0.03*0.5*sqrt(3)});
\coordinate [] (0a) at ({0.03*0.5},{-0.03*0.5*sqrt(3)});
\coordinate [] (0b) at ({-0.03*0.5},{0.03*0.5*sqrt(3)});

\draw[lightgray, line width=1pt] (0b) to[out=30, in=270] (2b);
\draw[lightgray, line width=1pt] (2a) to[out=270, in=150] (1a);
\draw[lightgray, line width=1pt] (1b) to[out=150, in=30] (0a);

\end{scope}

\begin{scope}[scale=0.7, xshift=0, yshift=0]

\coordinate [label=right:$K$] (6) at ({3*sqrt(3)},0);
\coordinate [label=below:$K$] (5) at ({2*sqrt(3)},-1);
\coordinate [label=above:$K$] (4) at ({2*sqrt(3)},1);
\coordinate [label=above:$K$] (3) at ({sqrt(3)},0);
\coordinate [label=below left:$J$] (2) at (0,-1);
\coordinate [label=above left:$I$] (1) at (0,1);
\coordinate [] (0) at (0,0);

\draw[gray, opacity=0.5,fill=lightgray,fill opacity=0.5] (1)--(2)--(3)--cycle;

\end{scope}

\node[] at ({0.7*0.6},0) {$a$};

\filldraw[darkgray] (5) circle (0.07);
\filldraw[darkgray] (4) circle (0.07);
\filldraw[darkgray] (3) circle (0.07);

\end{scope}

\begin{scope}[xshift=170, yshift=0]

\node[] at (-1.1,0) {$=$};

\begin{scope}[scale=0.7, xshift=0, yshift=0]

\coordinate [] (6) at ({3*sqrt(3)},0);
\coordinate [] (5) at ({2*sqrt(3)},-1);
\coordinate [] (4) at ({2*sqrt(3)},1);
\coordinate [label=right:$K$] (3) at ({sqrt(3)},0);
\coordinate [label=below left:$J$] (2) at (0,-1);
\coordinate [label=above left:$I$] (1) at (0,1);
\coordinate [] (0) at (0,0);

\draw[gray, opacity=0.5,fill=lightgray,fill opacity=0.5] (1)--(2)--(3)--cycle;

\end{scope}

\node[] at ({0.7*0.6},0) {$a$};

\end{scope}

\end{tikzpicture}
\end{center}
\begin{center}
\begin{tikzpicture}[]

\begin{scope}[xshift=0, yshift=0]

\begin{scope}[scale=1.3, rotate=30, xshift=39, yshift=-39.5]

\coordinate [] (2a) at (0.5+0.03,{0.5*sqrt(3)});
\coordinate [] (2b) at (0.5-0.03,{0.5*sqrt(3)});
\coordinate [] (1a) at ({1+0.03*0.5},{0.03*0.5*sqrt(3)});
\coordinate [] (1b) at ({1-0.03*0.5},{-0.03*0.5*sqrt(3)});
\coordinate [] (0a) at ({0.03*0.5},{-0.03*0.5*sqrt(3)});
\coordinate [] (0b) at ({-0.03*0.5},{0.03*0.5*sqrt(3)});

\draw[lightgray, line width=1pt] (0b) to[out=30, in=270] (2b);
\draw[lightgray, line width=1pt] (2a) to[out=270, in=150] (1a);
\draw[lightgray, line width=1pt] (1b) to[out=150, in=30] (0a);

\end{scope}

\begin{scope}[scale=0.7, xshift=0, yshift=0]

\coordinate [label=right:$K$] (6) at ({3*sqrt(3)},0);
\coordinate [label=below:$K$] (5) at ({2*sqrt(3)},-1);
\coordinate [label=above:$K$] (4) at ({2*sqrt(3)},1);
\coordinate [label=above:$K$] (3) at ({sqrt(3)},0);
\coordinate [label=below left:$J$] (2) at (0,-1);
\coordinate [label=above left:$I$] (1) at (0,1);
\coordinate [] (0) at (0,0);

\draw[gray, opacity=0.5,fill=lightgray,fill opacity=0.5] (1)--(2)--(3)--cycle;
\draw[lightgray, dashed] (3)--(5)--(4)--cycle;
\draw[lightgray, line width=1pt, double] (3)--(5);

\end{scope}

\node[] at ({0.7*0.6},0) {$a$};

\filldraw[darkgray] (5) circle (0.07);
\filldraw[darkgray] (4) circle (0.07);
\filldraw[darkgray] (3) circle (0.07);

\end{scope}

\begin{scope}[xshift=170, yshift=0]

\node[] at (-1.1,0) {$=$};

\begin{scope}[scale=0.7, xshift=0, yshift=0]

\coordinate [] (6) at ({3*sqrt(3)},0);
\coordinate [] (5) at ({2*sqrt(3)},-1);
\coordinate [] (4) at ({2*sqrt(3)},1);
\coordinate [label=right:$K$] (3) at ({sqrt(3)},0);
\coordinate [label=below left:$J$] (2) at (0,-1);
\coordinate [label=above left:$I$] (1) at (0,1);
\coordinate [] (0) at (0,0);

\draw[gray, opacity=0.5,fill=lightgray,fill opacity=0.5] (1)--(2)--(3)--cycle;

\end{scope}

\node[] at ({0.7*0.6},0) {$a$};

\end{scope}

\end{tikzpicture}
\end{center}
\begin{center}
\begin{tikzpicture}[]

\begin{scope}[xshift=0, yshift=0]

\begin{scope}[scale=1.3, rotate=-30, xshift=24, yshift=14]

\coordinate [] (2a) at (0.5+0.03,{0.5*sqrt(3)});
\coordinate [] (2b) at (0.5-0.03,{0.5*sqrt(3)});
\coordinate [] (1a) at ({1+0.03*0.5},{0.03*0.5*sqrt(3)});
\coordinate [] (1b) at ({1-0.03*0.5},{-0.03*0.5*sqrt(3)});
\coordinate [] (0a) at ({0.03*0.5},{-0.03*0.5*sqrt(3)});
\coordinate [] (0b) at ({-0.03*0.5},{0.03*0.5*sqrt(3)});

\draw[lightgray, line width=1pt] (0b) to[out=30, in=270] (2b);
\draw[lightgray, line width=1pt] (2a) to[out=270, in=150] (1a);
\draw[lightgray, line width=1pt] (1b) to[out=150, in=30] (0a);

\end{scope}

\begin{scope}[scale=0.7, xshift=0, yshift=0]

\coordinate [label=right:$K$] (6) at ({3*sqrt(3)},0);
\coordinate [label=below:$K$] (5) at ({2*sqrt(3)},-1);
\coordinate [label=above:$K$] (4) at ({2*sqrt(3)},1);
\coordinate [label=above:$K$] (3) at ({sqrt(3)},0);
\coordinate [label=below left:$J$] (2) at (0,-1);
\coordinate [label=above left:$I$] (1) at (0,1);
\coordinate [] (0) at (0,0);

\draw[gray, opacity=0.5,fill=lightgray,fill opacity=0.5] (1)--(2)--(3)--cycle;
\draw[lightgray, dashed] (4)--(5)--(6)--cycle;
\draw[lightgray, line width=1pt, double] (5)--(6);

\end{scope}

\node[] at ({0.7*0.6},0) {$a$};

\filldraw[darkgray] (5) circle (0.07);
\filldraw[darkgray] (4) circle (0.07);
\filldraw[darkgray] (3) circle (0.07);

\end{scope}

\begin{scope}[xshift=170, yshift=0]

\node[] at (-1.1,0) {$=$};

\begin{scope}[scale=0.7, xshift=0, yshift=0]

\coordinate [] (6) at ({3*sqrt(3)},0);
\coordinate [] (5) at ({2*sqrt(3)},-1);
\coordinate [] (4) at ({2*sqrt(3)},1);
\coordinate [label=right:$K$] (3) at ({sqrt(3)},0);
\coordinate [label=below left:$J$] (2) at (0,-1);
\coordinate [label=above left:$I$] (1) at (0,1);
\coordinate [] (0) at (0,0);

\draw[gray, opacity=0.5,fill=lightgray,fill opacity=0.5] (1)--(2)--(3)--cycle;

\end{scope}

\node[] at ({0.7*0.6},0) {$a$};

\end{scope}

\end{tikzpicture}
\end{center}
where we have expanded on the index $K$ and thus the analogous diagrammatic equations hold by expanding on the indexes $I$ and $J$.
\end{proof}
By restricting to regular $3$-plexes defined on a single index set $I$ we can easily see that this result implies that the $3$-identity plex on $I$ becomes a one-sided\footnote{Right or left depending on the labelling convention.} biunit for the fish semiheap $A_{III}$. Crucially, however, no two-sided biunits can be canonically found in the fish semiheaps of regular $3$-plexes in general. This means that fish semiheaps are strictly inequivalent to involuted monoids as per Theorem \ref{monoidsemiheap} and thus appear as a novel family of examples of semiheap structures.\newline

Beyond the existence of canonical choices of $3$-plexes that behave like fish units, the diagrammatic equations can be taken as the defining conditions for specific $3$-plexes. In particular, as an analogue of semiheap biunits in the context of $3$-plex semiheapoids with the fish product, we define a \textbf{biunit pair} as two $3$-plexes $e$ and $e'$ satisfying
\begin{center}
\begin{tikzpicture}[line join = round, line cap = round]

\begin{scope}[xshift=0, yshift=0]

\begin{scope}[scale=0.7, xshift=0, yshift=0]

\coordinate [] (6) at ({3*sqrt(3)},0);
\coordinate [] (5) at ({2*sqrt(3)},-1);
\coordinate [] (4) at ({2*sqrt(3)},1);
\coordinate [] (3) at ({sqrt(3)},0);
\coordinate [] (2) at (0,-1);
\coordinate [] (1) at (0,1);
\coordinate [] (0) at (0,0);

\draw[gray, opacity=0.5,fill=lightgray,fill opacity=0.5] (1)--(2)--(3)--cycle;
\draw[gray, opacity=0.5,fill=lightgray,fill opacity=0.5] (3)--(4)--(5)--cycle;
\draw[gray, opacity=0.5,fill=lightgray,fill opacity=0.5] (4)--(5)--(6)--cycle;

\end{scope}

\node[] at ({0.7*0.6},0) {$a$};
\node[] at ({0.7*(2*sqrt(3)-0.6)},0) {$e$};
\node[] at ({0.7*(2*sqrt(3)+0.6)},0.05) {$e'$};

\filldraw[darkgray] (5) circle (0.07);
\filldraw[darkgray] (4) circle (0.07);
\filldraw[darkgray] (3) circle (0.07);

\end{scope}

\begin{scope}[xshift=155, yshift=0]

\node[] at (-0.9,0) {$=$};

\begin{scope}[scale=0.7, xshift=0, yshift=0]

\coordinate [] (6) at ({3*sqrt(3)},0);
\coordinate [] (5) at ({2*sqrt(3)},-1);
\coordinate [] (4) at ({2*sqrt(3)},1);
\coordinate [] (3) at ({sqrt(3)},0);
\coordinate [] (2) at (0,-1);
\coordinate [] (1) at (0,1);
\coordinate [] (0) at (0,0);

\draw[gray, opacity=0.5,fill=lightgray,fill opacity=0.5] (1)--(2)--(3)--cycle;

\end{scope}

\node[] at ({0.7*0.6},0) {$a$};

\end{scope}

\end{tikzpicture}
\end{center}
\begin{center}
\begin{tikzpicture}[line join = round, line cap = round]

\begin{scope}[xshift=0, yshift=0]

\begin{scope}[scale=0.7, xshift=0, yshift=0]

\coordinate [] (6) at ({3*sqrt(3)},0);
\coordinate [] (5) at ({2*sqrt(3)},-1);
\coordinate [] (4) at ({2*sqrt(3)},1);
\coordinate [] (3) at ({sqrt(3)},0);
\coordinate [] (2) at (0,-1);
\coordinate [] (1) at (0,1);
\coordinate [] (0) at (0,0);

\draw[gray, opacity=0.5,fill=lightgray,fill opacity=0.5] (1)--(2)--(3)--cycle;
\draw[gray, opacity=0.5,fill=lightgray,fill opacity=0.5] (3)--(4)--(5)--cycle;
\draw[gray, opacity=0.5,fill=lightgray,fill opacity=0.5] (4)--(5)--(6)--cycle;

\end{scope}

\node[] at ({0.7*0.6},0) {$e$};
\node[] at ({0.7*(2*sqrt(3)-0.6)},0.05) {$e'$};
\node[] at ({0.7*(2*sqrt(3)+0.6)},0) {$a$};

\filldraw[darkgray] (5) circle (0.07);
\filldraw[darkgray] (4) circle (0.07);
\filldraw[darkgray] (3) circle (0.07);

\end{scope}

\begin{scope}[xshift=155, yshift=0]

\node[] at (-0.9,0) {$=$};

\begin{scope}[scale=0.7, xshift=0, yshift=0]

\coordinate [] (6) at ({3*sqrt(3)},0);
\coordinate [] (5) at ({2*sqrt(3)},-1);
\coordinate [] (4) at ({2*sqrt(3)},1);
\coordinate [] (3) at ({sqrt(3)},0);
\coordinate [] (2) at (0,-1);
\coordinate [] (1) at (0,1);
\coordinate [] (0) at (0,0);

\draw[gray, opacity=0.5,fill=lightgray,fill opacity=0.5] (1)--(2)--(3)--cycle;

\end{scope}

\node[] at ({0.7*0.6},0) {$a$};

\end{scope}

\end{tikzpicture}
\end{center}
When $e=e'$ the notion of biunit pair recovers the usual notion of biunit for the fish semiheaps of Proposition \ref{fishsemiheap}. This further motivates the definition of \textbf{heapoid} in Section \ref{heapoids} below. A $3$-plex heapoid is a $3$-plex semiheapoid where any $3$-plex $a$ has an associated $3$-plex $a'$ such that the pair is a fish biunit.\newline

We conclude our discussion of the fish product by presenting a summary of our results expressed in terms of array algebra by choosing some index sequentializations. This will serve two purposes: on the one hand, it will allow us to use standard index notation to state our results and, on the other, it will make the connection to existing formalisms explicit. For concreteness, we shall focus on a set of $3$-plexes on fixed indices $I$, $J$, $K$ and the ternary operation $\eta_{IJK}$ as defined in Proposition \ref{fishsemiheap}. Sequentializing $3$-plexes amounts to choosing a linear order for the three distinct indices. We set:
\begin{equation*}
    (I,J,K) \qquad \mapsto \qquad (1,2,3)
\end{equation*}
that is, given a $3$-plex $a$ on the indices $I$, $J$, $K$ we have the following correspondence with arrays:
\begin{center}
\begin{tikzpicture}[line join = round, line cap = round]

\begin{scope}[scale=1.2, xshift=0, yshift=-10]

\coordinate [label=above:$k$] (2) at (0.5,{0.5*sqrt(3)});
\coordinate [label= below right:$j$] (1) at (1,0);
\coordinate [label=below left:$i$] (0) at (0,0);

\draw[lightgray, opacity=0.5,fill=lightgray,fill opacity=0.5] (1)--(0)--(2)--cycle;

\end{scope}

\node[] at (0.6,0) {$a$};

\node[] at (2.5,0) {$\mapsto$};

\node[] at (4,0) {$a_{ijk}$};

\end{tikzpicture}
\end{center}
The fish product $\eta_{IJK}$ thus becomes a ternary operation for $3$-arrays:
\begin{center}
\begin{tikzpicture}[line join = round, line cap = round]

\begin{scope}[xshift=0, yshift=0]

\begin{scope}[scale=0.7, xshift=0, yshift=0]

\coordinate [label=right:$k$] (6) at ({3*sqrt(3)},0);
\coordinate [label=below:$r$] (5) at ({2*sqrt(3)},-1);
\coordinate [label=above:$q$] (4) at ({2*sqrt(3)},1);
\coordinate [label=above:$p$] (3) at ({sqrt(3)},0);
\coordinate [label=below left:$j$] (2) at (0,-1);
\coordinate [label=above left:$i$] (1) at (0,1);
\coordinate [] (0) at (0,0);

\draw[gray, opacity=0.5,fill=lightgray,fill opacity=0.5] (1)--(2)--(3)--cycle;
\draw[gray, opacity=0.5,fill=lightgray,fill opacity=0.5] (3)--(4)--(5)--cycle;
\draw[gray, opacity=0.5,fill=lightgray,fill opacity=0.5] (4)--(5)--(6)--cycle;

\end{scope}

\node[] at ({0.7*0.6},0) {$a$};
\node[] at ({0.7*(2*sqrt(3)-0.6)},0.05) {$b$};
\node[] at ({0.7*(2*sqrt(3)+0.6)},0) {$c$};

\filldraw[darkgray] (5) circle (0.07);
\filldraw[darkgray] (4) circle (0.07);
\filldraw[darkgray] (3) circle (0.07);

\end{scope}

\node[] at (5.2,0) {$\mapsto$};

\node[] at (7,0) {$(abc)_{ijk}$};

\node[] at (8,0) {$=$};

\node[] at (10,-0.1) {$\displaystyle\sum_{pqr} a_{ijp}\cdot b_{qrp} \cdot c_{qrk}$};

\end{tikzpicture}
\end{center}
where we have grouped all the summed indices under a single summation sign for brevity -- note that each of the summed indices ranges over a distinct index set $p\in K$, $q\in I$, $r\in J$ and, consequently, they occur in the same positions in different arrays. This is indeed the ternary operation defined for the composition of triadas in \cite{longyear1972further} and the multiplication of cubic matrices in \cite{abramov2009algebras} that we identified as the fish product (\ref{fishmultplication}) in Section \ref{intro}. We must point out that our definition of fish product is more general than those found in \cite{longyear1972further} and \cite{abramov2009algebras}, since these references assumed $I=J=K$ in the definition of the ternary operation of $3$-arrays. Assuming that all $3$-arrays are regular is compatible with the conformability conditions of the fish product -- although by no means necessary -- and it leads to some additional freedom in the internal contraction of the indices. This was reflected in Theorem 4.1 of \cite{abramov2009algebras} that identified four possible sequentializations of the fish product for regular $3$-arrays:
\begin{center}
    \includegraphics[scale=0.26]{fish3.png}
\end{center}
Note that $\textit{(1)}$ and $\textit{(4)}$ correspond to the same fish diagram just by relabelling the $A$ $B$ $C$ arrays in either tail-body-head or head-body-tail sequential order; and similarly for $\textit{(2)}$ and $\textit{(3)}$. Therefore, we only need to distinguish between $\textit{(3)}$ and $\textit{(4)}$. Observe that $\textit{(4)}$ has a permutation of internal indices (the first to indices of $B$) with respect to $\textit{(3)}$. This simply corresponds to the fact that in the case of regular arrays the fish plex diagram defines two ternary operations by twisting.\newline

It follows from Theorem \ref{fishassoc} that this ternary operation of $3$-arrays satisfies the semiheap associativity property (\ref{semiheap}). In terms of array algebra, this is an immediate consequence of the associativity and distributivity properties of the value semiring. We can obtain the semiheap associativity equations (\ref{semiheap}) by bracketing products of three factors and distributing sums in the explicit array expression corresponding to the long fish hypergraph:
\begin{equation*}
    (abcde)_{ijk} = \sum_{pqrnml} a_{ijp}\cdot b_{qrp} \cdot c_{qrn} \cdot d_{mln} \cdot e_{mlk}.
\end{equation*}

The identity $3$-plexes are now expressed in terms of Kronecker arrays: the $3$-array identity on an index set $K$ is denoted by $t_{k_1k_2k_3}:=\delta_{k_1k_2k_3}$ and the identity $2$-array on $I$ broadened to $J$ is denoted by $u_{i_1i_2j}:=\delta_{i_1i_2}$, and similarly for other index sets by adding some upper script label. The diagrammatic equations of Proposition \ref{fishunits} now become the following array equations that are easily checked by direct computation:
\begin{align*}
    (att)_{ijk} &= \sum_{pqr} a_{ijp}\cdot \delta_{qrp} \cdot \delta_{qrk} = a_{ijk} \\
    (aut)_{ijk} &= \sum_{pqr} a_{ijp}\cdot \delta_{rp} \cdot \delta_{qrk} = a_{ijk} \\
    (atu)_{ijk} &= \sum_{pqr} a_{ijp}\cdot \delta_{qrp} \cdot \delta_{rk} = a_{ijk}
\end{align*}
Note that the first equation above amounts to the fact that the $3$-array identity behaves like a right biunit in the semiheap of regular $3$-arrays. The array equations for fish products are particularly useful to rewrite the biunit conditions. Recall that an element $e$ is a fish biunit when
\begin{equation*}
    (aee')=a=(ee'a).
\end{equation*}
By writing the corresponding array equations after sequentialization we see that these two equations are equivalent to:
\begin{align*}
    \sum_{p} e_{pij}\cdot e'_{pkl} &= \delta_{ik} \cdot \delta_{jl}  \\
    \sum_{qr} e_{iqr}\cdot e'_{jqr} &= \delta_{ij}.
\end{align*}
which should be read as a $3$-index analogue of the quadratic equation of invertible matrices. A collection of $3$-arrays satisfying the above quadratic equations forms the structure of a heapoid with the fish product as ternary composition.\newline

The flattened fish product becomes particularly transparent in explicit array notation: a flattened $3$-array $a_{ijk}$ on indices $(I,J,K)$ where $J$ and $K$ are considered as a multi-index is simply written as a $2$-array
\begin{equation*}
    a_{im}
\end{equation*}
where $m=(i,k)\in J\times K$. The fish product then becomes
\begin{equation*}
    (abc)_{im}=\sum_{pn} a_{in} \cdot b_{pn} \cdot c_{pm}
\end{equation*}
which we recognize as the concatenation of binary compositions of $2$-arrays and a transposition:
\begin{equation*}
    \sum_{pn} a_{in} \cdot b_{pn} \cdot c_{pm} = (a \circ b^\top \circ c)_{im}.
\end{equation*}
This explicitly recovers the semiheap operation defined on the morphism sets of dagger categories in Section \ref{heaps} for the particular case of the category of $2$-arrays with binary composition and transposition, generalizing the usual category of binary relations on sets or matrices over a ring. This result enables us to contextualize the question of existence of two-sided biunits in fish semiheaps in existing formalisms. Recall that semiheaps of heterogeneous relations only become heaps when isomorphic sets are considered \cite{hollings2017wagner}, therefore a fish semiheap could be restricted to a fish heap only when
\begin{equation*}
    I\cong J\times K
\end{equation*}
as sets. Indeed, the fish biunit equations for a pair of $3$-arrays $e$, $e'$ above are now rewritten for the flattened $2$-arrays in terms of ordinary binary composition as:
\begin{align*}
    e' \circ e^\top &= \Id_J  \otimes \Id_K = \Id_{J\times K} \\
    e^\top \circ e' &=\Id_I
\end{align*}
which we recognize as the condition that $e^\top$ and $e'$ are inverses and thus give an isomorphism of sets between $I$ and $J\times K$. This recovers the standard example of heaps as the morphisms sets of a groupoid.

\section{Heapoids and Fish Categories}\label{heapoids}

The behaviour $3$-plex diagrams discussed in Section \ref{fish} motivates the definition of a general category-like structure that subsumes the algebra of $3$-arrays endowed with the fish product. Our approach is to emulate how the standard definition of category can be abstracted from the basic example of binary relations between sets.\newline

As in the case of ordinary categories, we begin by considering a collection of \textbf{objects} without any internal structure denoted by $\{x,y,z, \dots\} = \mathcal{H}$. A general \textbf{ternary morphism}\footnote{Higher arity generalizations of morphisms were considered by V.V. Topentcharov \cite{topentcharov1988structures} and, more recently, by N. Baas \cite{baas2016higher} where they were called `bonds'.} is formally defined as a labelled multiset of three objects, that is, a generally unordered collection of three objects some of which may be repeated. A ternary morphism $a$ binding the trio of objects $x,y,z$ is depicted as a hyperedge in a similar fashion to the plex diagrams of Section \ref{plex}, graphically:
\begin{center}
\begin{tikzpicture}[line join = round, line cap = round]

\coordinate [label=above:$z$] (2) at (0.5,{0.5*sqrt(3)});
\coordinate [label= below right:$y$] (1) at (1,0);
\coordinate [label=below left:$x$] (0) at (0,0);

\draw[lightgray, opacity=0.5,fill=lightgray,fill opacity=0.5] (1)--(0)--(2)--cycle;

\node[] at (0.5,0.3) {$a$};

\end{tikzpicture}
\end{center}
The collection of all morphisms between a trio of objects $x,y,z \in \mathcal{H}$ is called the \textbf{morphism set} $\mathcal{H}\big( \raisebox{-3pt}[0pt][4pt]{{\stackon{$x\,y$}{$z$}}}\big)$ on those objects. Note that $\raisebox{-3pt}[0pt][4pt]{{\stackon{$x\,y$}{$z$}}}$ denotes any multiset of three objects and thus no sequential order is specified\footnote{This is directly inspired by the pre-sequential nature of plex diagrams in Section \ref{plex}.}. We shall remain agnostic with respect to sequential order and consider multisets instead of tuples. This will suffice to highlight the compositional aspects of generalized category-like structures below.\newline

The ordinary definition of composition in a semigroupoid or a category $\mathcal{C}$ involves a partial operation on the sets of morphisms subject to the source-and-target conformability conditions, that is, (binary) morphisms are composed via the formal assignment:
\begin{equation*}
    \mathcal{C}(xy) \times \mathcal{C}(yz) \to \mathcal{C}(xz)
\end{equation*}
Diagrammatically, composition of (binary) morphisms in a semigroupoid or category is an operation on simple graphs:
\begin{center}
\begin{tikzpicture}[line join = round, line cap = round]

\begin{scope}[xshift=0, yshift=0]

\coordinate [label=below right:$z$] (2) at (1,0);
\coordinate [label=above:$y$] (1) at (0,1);
\coordinate [label=below left:$x$] (0) at (-1,0);

\draw[opacity=0.5, lightgray, ultra thick] (0)--(1);
\draw[opacity=0.5, lightgray, ultra thick] (1)--(2);

\node[] at (-0.8,0.6) {$f$};
\node[] at (0.8,0.65) {$g$};

\end{scope}

\begin{scope}[xshift=150, yshift=10]

\node[] at (-2.5,0) {$\mapsto$};

\coordinate [label=right:$z$] (2) at (1,0);
\coordinate [] (1) at (0,1);
\coordinate [label=left:$x$] (0) at (-1,0);

\draw[opacity=0.5, lightgray, ultra thick] (0)--(2);

\node[] at (0,0.4) {$f\circ g$};

\end{scope}

\end{tikzpicture}
\end{center}
This suggests a natural generalization for higher order compositions of morphisms of arbitrary arity as operations on hypergraphs. In particular, we can define the \textbf{fish composition} of ternary morphisms as the formal assignment
\begin{equation*}
    \mathcal{H}\big( \raisebox{-3pt}[0pt][4pt]{{\stackon{$x\,y$}{$p$}}}\big) \times \mathcal{H}\big( \raisebox{-3pt}[0pt][4pt]{{\stackon{$q\,r$}{$p$}}}\big) \times \mathcal{H}\big( \raisebox{-3pt}[0pt][4pt]{{\stackon{$q\,r$}{$z$}}}\big) \to \mathcal{H}\big( \raisebox{-3pt}[0pt][4pt]{{\stackon{$x\,y$}{$z$}}}\big)
\end{equation*}
defined by the fish diagram:
\begin{center}
\begin{tikzpicture}[line join = round, line cap = round]

\begin{scope}[xshift=0, yshift=0]

\begin{scope}[scale=0.7, xshift=0, yshift=0]

\coordinate [label=right:$z$] (6) at ({3*sqrt(3)},0);
\coordinate [label=below:$r$] (5) at ({2*sqrt(3)},-1);
\coordinate [label=above:$q$] (4) at ({2*sqrt(3)},1);
\coordinate [label=above:$p$] (3) at ({sqrt(3)},0);
\coordinate [label=below left:$y$] (2) at (0,-1);
\coordinate [label=above left:$x$] (1) at (0,1);
\coordinate [] (0) at (0,0);

\draw[gray, opacity=0.5,fill=lightgray,fill opacity=0.5] (1)--(2)--(3)--cycle;
\draw[gray, opacity=0.5,fill=lightgray,fill opacity=0.5] (3)--(4)--(5)--cycle;
\draw[gray, opacity=0.5,fill=lightgray,fill opacity=0.5] (4)--(5)--(6)--cycle;

\end{scope}

\node[] at ({0.7*0.6},0) {$a$};
\node[] at ({0.7*(2*sqrt(3)-0.6)},0.05) {$b$};
\node[] at ({0.7*(2*sqrt(3)+0.6)},0) {$c$};

\end{scope}

\begin{scope}[xshift=180, yshift=0]

\node[] at (-1.3,0) {$\mapsto$};

\begin{scope}[scale=0.7, xshift=0, yshift=0]

\coordinate [] (6) at ({3*sqrt(3)},0);
\coordinate [] (5) at ({2*sqrt(3)},-1);
\coordinate [] (4) at ({2*sqrt(3)},1);
\coordinate [label=right:$z$] (3) at ({sqrt(3)},0);
\coordinate [label=below left:$y$] (2) at (0,-1);
\coordinate [label=above left:$x$] (1) at (0,1);
\coordinate [] (0) at (0,0);

\draw[gray, opacity=0.5,fill=lightgray,fill opacity=0.5] (1)--(2)--(3)--cycle;

\end{scope}

\node[] at ({0.7*0.6},0.04) {$abc$};

\end{scope}

\end{tikzpicture}
\end{center}
Enforcing the same rule of diagrammatic non-multiplicity that leads to the axiom of binary associativity in ordinary semigroupoids and categories, i.e. the fact that there exists a concurrent rewrite system with the composition of binary morphisms as rewrite rule as established in Section \ref{confl}, we are lead to the axiom of \textbf{fish associativity} as shown in Theorem \ref{fishassoc}. Symbolically, this corresponds to the fact that the long fish diagram for $5$ ternary morphisms:
\begin{center}
\begin{tikzpicture}[line join = round, line cap = round]

\begin{scope}[scale=0.7, xshift=0, yshift=0]

\coordinate [] (9) at ({5*sqrt(3)},0);
\coordinate [] (8) at ({4*sqrt(3)},-1);
\coordinate [] (7) at ({4*sqrt(3)},1);
\coordinate [] (6) at ({3*sqrt(3)},0);
\coordinate [] (5) at ({2*sqrt(3)},-1);
\coordinate [] (4) at ({2*sqrt(3)},1);
\coordinate [] (3) at ({sqrt(3)},0);
\coordinate [] (2) at (0,-1);
\coordinate [] (1) at (0,1);
\coordinate [] (0) at (0,0);

\draw[gray, opacity=0.5,fill=lightgray,fill opacity=0.5] (1)--(2)--(3)--cycle;
\draw[gray, opacity=0.5,fill=lightgray,fill opacity=0.5] (3)--(4)--(5)--cycle;
\draw[gray, opacity=0.5,fill=lightgray,fill opacity=0.5] (4)--(5)--(6)--cycle;
\draw[gray, opacity=0.5,fill=lightgray,fill opacity=0.5] (6)--(7)--(8)--cycle;
\draw[gray, opacity=0.5,fill=lightgray,fill opacity=0.5] (8)--(7)--(9)--cycle;

\end{scope}

\node[] at ({0.7*0.6},0) {$a$};
\node[] at ({0.7*(2*sqrt(3)-0.6)},0.05) {$b$};
\node[] at ({0.7*(2*sqrt(3)+0.6)},0) {$c$};
\node[] at ({0.7*(4*sqrt(3)-0.6)},0.05) {$d$};
\node[] at ({0.7*(4*sqrt(3)+0.6)},0) {$e$};

\end{tikzpicture}
\end{center}
evaluates to a single ternary morphism. This means that the multiple internal fish compositions must all be equal, thus leading to the semiheap property (\ref{semiheap}) introduced in Section \ref{heaps}:
\begin{equation*}
    ((abc)de)=(a(dcb)e)=(ab(cde)).
\end{equation*}
A collection of objects and ternary morphisms $\mathcal{H}$ endowed with such a ternary composition defined from the fish diagram is called a \textbf{semiheapoid}. The conformability condition of ternary morphism composition captured by the fish diagram is such that it restricts to a well-defined ternary operation on morphism sets. Indeed, consider a morphism set $\mathcal{H}\big( \raisebox{-3pt}[0pt][4pt]{{\stackon{$x\,y$}{$z$}}}\big)$, then we can define the ternary operation:
\begin{center}
\begin{tikzpicture}[line join = round, line cap = round]

\begin{scope}[xshift=0, yshift=0]

\begin{scope}[scale=0.7, xshift=0, yshift=0]

\coordinate [label=right:$z$] (6) at ({3*sqrt(3)},0);
\coordinate [label=below:$y$] (5) at ({2*sqrt(3)},-1);
\coordinate [label=above:$x$] (4) at ({2*sqrt(3)},1);
\coordinate [label=above:$z$] (3) at ({sqrt(3)},0);
\coordinate [label=below left:$y$] (2) at (0,-1);
\coordinate [label=above left:$x$] (1) at (0,1);
\coordinate [] (0) at (0,0);

\draw[gray, opacity=0.5,fill=lightgray,fill opacity=0.5] (1)--(2)--(3)--cycle;
\draw[gray, opacity=0.5,fill=lightgray,fill opacity=0.5] (3)--(4)--(5)--cycle;
\draw[gray, opacity=0.5,fill=lightgray,fill opacity=0.5] (4)--(5)--(6)--cycle;

\end{scope}

\node[] at ({0.7*0.6},0) {$a$};
\node[] at ({0.7*(2*sqrt(3)-0.6)},0.05) {$b$};
\node[] at ({0.7*(2*sqrt(3)+0.6)},0) {$c$};

\end{scope}

\begin{scope}[xshift=180, yshift=0]

\node[] at (-1.3,0) {$=:$};

\node[] at ({0.7*0.6},0.04) {$\eta_{xyz} (a,b,c)$};

\end{scope}

\end{tikzpicture}
\end{center}
the axiom of fish associativity then implies that the morphism set carries a semiheap structure
\begin{equation*}
    \big(\mathcal{H}\big( \raisebox{-3pt}[0pt][4pt]{{\stackon{$x\,y$}{$z$}}}\big), \eta_{xyz}\big)
\end{equation*}
for all trios of objects $x,y,z \in \mathcal{H}$. These semiheap structures are analogous to the fish semiheaps of Proposition \ref{fishsemiheap}.\newline

A ternary morphism $e$ in a semiheapoid $\mathcal{H}$ is called a \textbf{biunit} if there exists another ternary morphism $e'$ such that the following diagrammatic equations hold:
\begin{center}
\begin{tikzpicture}[line join = round, line cap = round]

\begin{scope}[xshift=0, yshift=0]

\begin{scope}[scale=0.7, xshift=0, yshift=0]

\coordinate [] (6) at ({3*sqrt(3)},0);
\coordinate [] (5) at ({2*sqrt(3)},-1);
\coordinate [] (4) at ({2*sqrt(3)},1);
\coordinate [] (3) at ({sqrt(3)},0);
\coordinate [] (2) at (0,-1);
\coordinate [] (1) at (0,1);
\coordinate [] (0) at (0,0);

\draw[gray, opacity=0.5,fill=lightgray,fill opacity=0.5] (1)--(2)--(3)--cycle;
\draw[gray, opacity=0.5,fill=lightgray,fill opacity=0.5] (3)--(4)--(5)--cycle;
\draw[gray, opacity=0.5,fill=lightgray,fill opacity=0.5] (4)--(5)--(6)--cycle;

\end{scope}

\node[] at ({0.7*0.6},0) {$a$};
\node[] at ({0.7*(2*sqrt(3)-0.6)},0) {$e$};
\node[] at ({0.7*(2*sqrt(3)+0.6)},0.05) {$e'$};

\end{scope}

\begin{scope}[xshift=155, yshift=0]

\node[] at (-0.9,0) {$=$};

\begin{scope}[scale=0.7, xshift=0, yshift=0]

\coordinate [] (6) at ({3*sqrt(3)},0);
\coordinate [] (5) at ({2*sqrt(3)},-1);
\coordinate [] (4) at ({2*sqrt(3)},1);
\coordinate [] (3) at ({sqrt(3)},0);
\coordinate [] (2) at (0,-1);
\coordinate [] (1) at (0,1);
\coordinate [] (0) at (0,0);

\draw[gray, opacity=0.5,fill=lightgray,fill opacity=0.5] (1)--(2)--(3)--cycle;

\end{scope}

\node[] at ({0.7*0.6},0) {$a$};

\end{scope}

\end{tikzpicture}
\end{center}
\begin{center}
\begin{tikzpicture}[line join = round, line cap = round]

\begin{scope}[xshift=0, yshift=0]

\begin{scope}[scale=0.7, xshift=0, yshift=0]

\coordinate [] (6) at ({3*sqrt(3)},0);
\coordinate [] (5) at ({2*sqrt(3)},-1);
\coordinate [] (4) at ({2*sqrt(3)},1);
\coordinate [] (3) at ({sqrt(3)},0);
\coordinate [] (2) at (0,-1);
\coordinate [] (1) at (0,1);
\coordinate [] (0) at (0,0);

\draw[gray, opacity=0.5,fill=lightgray,fill opacity=0.5] (1)--(2)--(3)--cycle;
\draw[gray, opacity=0.5,fill=lightgray,fill opacity=0.5] (3)--(4)--(5)--cycle;
\draw[gray, opacity=0.5,fill=lightgray,fill opacity=0.5] (4)--(5)--(6)--cycle;

\end{scope}

\node[] at ({0.7*0.6},0) {$e$};
\node[] at ({0.7*(2*sqrt(3)-0.6)},0.05) {$e'$};
\node[] at ({0.7*(2*sqrt(3)+0.6)},0) {$a$};

\end{scope}

\begin{scope}[xshift=155, yshift=0]

\node[] at (-0.9,0) {$=$};

\begin{scope}[scale=0.7, xshift=0, yshift=0]

\coordinate [] (6) at ({3*sqrt(3)},0);
\coordinate [] (5) at ({2*sqrt(3)},-1);
\coordinate [] (4) at ({2*sqrt(3)},1);
\coordinate [] (3) at ({sqrt(3)},0);
\coordinate [] (2) at (0,-1);
\coordinate [] (1) at (0,1);
\coordinate [] (0) at (0,0);

\draw[gray, opacity=0.5,fill=lightgray,fill opacity=0.5] (1)--(2)--(3)--cycle;

\end{scope}

\node[] at ({0.7*0.6},0) {$a$};

\end{scope}

\end{tikzpicture}
\end{center}
for all morphisms $a$ that are conformable as tail or head in the fish diagram with $e$ and $e'$. The pair $e, e'$ is called a \textbf{biunit pair}. A semiheapoid in which all ternary morphisms have a biunit pair is called a \textbf{heapoid}. A morphism $e$ that is its own biunit partner, i.e. when $e=e'$, is called a \textbf{Malcev} biunit and heapoids whose morphisms are Malcev biunits are called \textbf{Malcev heapoids}. The morphisms sets of Malcev heapoids $\big(\mathcal{H}\big( \raisebox{-3pt}[0pt][4pt]{{\stackon{$x\,y$}{$z$}}}\big), \eta_{xyz}\big)$ are heaps.\newline

As suggested by our discussion of the identity plexes in Section \ref{plex}, we can consider the ternary generalization of identity morphisms to be as follows: given any objects $x,y\in \mathcal{H}$ there always exist at least one morphism of the form
\begin{center}
\begin{tikzpicture}[line join = round, line cap = round]

\coordinate [label=above:$x$] (2) at (0.5,{0.5*sqrt(3)});
\coordinate [label= below right:$x$] (1) at (1,0);
\coordinate [label=below left:$x$] (0) at (0,0);

\draw[lightgray, opacity=0.5,fill=lightgray,fill opacity=0.5] (1)--(0)--(2)--cycle;

\node[] at (0.5,0.3) {$I$};

\end{tikzpicture}
\end{center}
called the \textbf{tridentity} on $x$, and one morphism of the form
\begin{center}
\begin{tikzpicture}[line join = round, line cap = round]

\coordinate [label=above:$y$] (2) at (0.5,{0.5*sqrt(3)});
\coordinate [label= below right:$x$] (1) at (1,0);
\coordinate [label=below left:$x$] (0) at (0,0);

\draw[lightgray, opacity=0.5,fill=lightgray,fill opacity=0.5] (1)--(0)--(2)--cycle;

\node[] at (0.5,0.3) {$i$};

\end{tikzpicture}
\end{center}
called the \textbf{partial identity} on $x$ extended to $y$. These singled-out ternary morphisms, collectively called \textbf{ternary identities}, are the analogues of the unique binary identity morphisms in ordinary categories. Following the diagrammatic equations of Proposition \ref{fishunits} we define a \textbf{fish category} as a semiheapoid where ternary identities exist and they satisfy the following diagrammatic equations:
\begin{center}
\begin{tikzpicture}[line join = round, line cap = round]

\begin{scope}[xshift=0, yshift=0]

\begin{scope}[scale=0.7, xshift=0, yshift=0]

\coordinate [] (6) at ({3*sqrt(3)},0);
\coordinate [] (5) at ({2*sqrt(3)},-1);
\coordinate [] (4) at ({2*sqrt(3)},1);
\coordinate [] (3) at ({sqrt(3)},0);
\coordinate [] (2) at (0,-1);
\coordinate [] (1) at (0,1);
\coordinate [] (0) at (0,0);

\draw[gray, opacity=0.5,fill=lightgray,fill opacity=0.5] (1)--(2)--(3)--cycle;
\draw[gray, opacity=0.5,fill=lightgray,fill opacity=0.5] (3)--(4)--(5)--cycle;
\draw[gray, opacity=0.5,fill=lightgray,fill opacity=0.5] (4)--(5)--(6)--cycle;

\end{scope}

\node[] at ({0.7*0.6},0) {$a$};
\node[] at ({0.7*(2*sqrt(3)-0.6)},0) {$I$};
\node[] at ({0.7*(2*sqrt(3)+0.6)},0) {$I$};

\end{scope}

\begin{scope}[xshift=155, yshift=0]

\node[] at (-0.9,0) {$=$};

\begin{scope}[scale=0.7, xshift=0, yshift=0]

\coordinate [] (6) at ({3*sqrt(3)},0);
\coordinate [] (5) at ({2*sqrt(3)},-1);
\coordinate [] (4) at ({2*sqrt(3)},1);
\coordinate [] (3) at ({sqrt(3)},0);
\coordinate [] (2) at (0,-1);
\coordinate [] (1) at (0,1);
\coordinate [] (0) at (0,0);

\draw[gray, opacity=0.5,fill=lightgray,fill opacity=0.5] (1)--(2)--(3)--cycle;

\end{scope}

\node[] at ({0.7*0.6},0) {$a$};

\end{scope}

\end{tikzpicture}
\end{center}
\begin{center}
\begin{tikzpicture}[line join = round, line cap = round]

\begin{scope}[xshift=0, yshift=0]

\begin{scope}[scale=0.7, xshift=0, yshift=0]

\coordinate [] (6) at ({3*sqrt(3)},0);
\coordinate [] (5) at ({2*sqrt(3)},-1);
\coordinate [] (4) at ({2*sqrt(3)},1);
\coordinate [] (3) at ({sqrt(3)},0);
\coordinate [] (2) at (0,-1);
\coordinate [] (1) at (0,1);
\coordinate [] (0) at (0,0);

\draw[gray, opacity=0.5,fill=lightgray,fill opacity=0.5] (1)--(2)--(3)--cycle;
\draw[gray, opacity=0.5,fill=lightgray,fill opacity=0.5] (3)--(4)--(5)--cycle;
\draw[gray, opacity=0.5,fill=lightgray,fill opacity=0.5] (4)--(5)--(6)--cycle;

\end{scope}

\node[] at ({0.7*0.6},0) {$a$};
\node[] at ({0.7*(2*sqrt(3)-0.6)},0) {$I$};
\node[] at ({0.7*(2*sqrt(3)+0.6)},0) {$i$};

\end{scope}

\begin{scope}[xshift=155, yshift=0]

\node[] at (-0.9,0) {$=$};

\begin{scope}[scale=0.7, xshift=0, yshift=0]

\coordinate [] (6) at ({3*sqrt(3)},0);
\coordinate [] (5) at ({2*sqrt(3)},-1);
\coordinate [] (4) at ({2*sqrt(3)},1);
\coordinate [] (3) at ({sqrt(3)},0);
\coordinate [] (2) at (0,-1);
\coordinate [] (1) at (0,1);
\coordinate [] (0) at (0,0);

\draw[gray, opacity=0.5,fill=lightgray,fill opacity=0.5] (1)--(2)--(3)--cycle;

\end{scope}

\node[] at ({0.7*0.6},0) {$a$};

\end{scope}

\end{tikzpicture}
\end{center}
\begin{center}
\begin{tikzpicture}[line join = round, line cap = round]

\begin{scope}[xshift=0, yshift=0]

\begin{scope}[scale=0.7, xshift=0, yshift=0]

\coordinate [] (6) at ({3*sqrt(3)},0);
\coordinate [] (5) at ({2*sqrt(3)},-1);
\coordinate [] (4) at ({2*sqrt(3)},1);
\coordinate [] (3) at ({sqrt(3)},0);
\coordinate [] (2) at (0,-1);
\coordinate [] (1) at (0,1);
\coordinate [] (0) at (0,0);

\draw[gray, opacity=0.5,fill=lightgray,fill opacity=0.5] (1)--(2)--(3)--cycle;
\draw[gray, opacity=0.5,fill=lightgray,fill opacity=0.5] (3)--(4)--(5)--cycle;
\draw[gray, opacity=0.5,fill=lightgray,fill opacity=0.5] (4)--(5)--(6)--cycle;

\end{scope}

\node[] at ({0.7*0.6},0) {$a$};
\node[] at ({0.7*(2*sqrt(3)-0.6)},0) {$i$};
\node[] at ({0.7*(2*sqrt(3)+0.6)},0) {$I$};

\end{scope}

\begin{scope}[xshift=155, yshift=0]

\node[] at (-0.9,0) {$=$};

\begin{scope}[scale=0.7, xshift=0, yshift=0]

\coordinate [] (6) at ({3*sqrt(3)},0);
\coordinate [] (5) at ({2*sqrt(3)},-1);
\coordinate [] (4) at ({2*sqrt(3)},1);
\coordinate [] (3) at ({sqrt(3)},0);
\coordinate [] (2) at (0,-1);
\coordinate [] (1) at (0,1);
\coordinate [] (0) at (0,0);

\draw[gray, opacity=0.5,fill=lightgray,fill opacity=0.5] (1)--(2)--(3)--cycle;

\end{scope}

\node[] at ({0.7*0.6},0) {$a$};

\end{scope}

\end{tikzpicture}
\end{center}

As illustrated by the ternary algebra of $3$-plexes at the end of Section \ref{fish}, the asymmetric anatomy of the fish composition implies that heapoids and fish categories are generically distinct classes of semiheapoids. In particular, the morphism sets of heapoids and fish categories give rise to inequivalent classes of semiheaps without two-sided biunits. To our knowledge, these notions are defined here for the first time. Heapoids and fish categories will be studied in more detail in future work.

\section{Towards Higher Associativity} \label{towards}

In this paper we have studied the behaviour of a particular ternary plex composition -- the fish product -- and we were able to rediscover a known form of generalized associativity -- the semiheap property (\ref{semiheap}) -- by analyzing the hypergraph rewrite system that results from the diagrammatic treatment of plexes. In turn, this lead to the definition of heapoids and a ternary notion of category -- what we called fish category -- and recovered familiar notions of invertibility conditions in the context of ternary morphisms, which generalize ternary relations and cubic matrices.\newline

This presents a clear path for future research on generalized associativity and generalized categories: by choosing a plex diagram representing a composition we can study its hypergraph rewrite properties and search for concurrency, as defined in Section \ref{confl}, to identify higher arity associativity axioms. Computational methods may be particularly useful since finding such rewrite systems amounts to enumerating some well-defined families of simple connected hypergraphs. We are currently working on the implementation of such algorithms \cite{collab2022arity}.\newline

More concretely, some plex diagrams are specially deserving of our initial attention when searching for ternary generalizations of associativity. As discussed in Section \ref{fish}, the fish product has a strongly sequential nature due to the asymmetric positions of the composed $3$-plexes. This prompts us to search for ternary plex diagrams whose composed $3$-plexes appear in symmetric positions. It turns out that there are only three such ternary compositions \cite{zapata2022invitation}. They are defined by the following plex diagrams:
\begin{center}
\begin{tikzpicture}[line join = round, line cap = round]

\begin{scope}[xshift=0, yshift=0]

\coordinate [] (3) at ({0.5*sqrt(3)},-0.5);
\coordinate [] (2) at (-{0.5*sqrt(3)},-0.5);
\coordinate [] (1) at (0,1);
\coordinate [] (0) at (0,0);

\draw[gray, opacity=0.5,fill=lightgray,fill opacity=0.5] (1)--(0)--(2)--cycle;
\draw[gray, opacity=0.5,fill=lightgray,fill opacity=0.5] (2)--(0)--(3)--cycle;
\draw[gray, opacity=0.5,fill=lightgray,fill opacity=0.5] (1)--(0)--(3)--cycle;

\filldraw[darkgray] (0) circle (0.07);

\end{scope}

\begin{scope}[xshift=100, yshift=0]

\coordinate [] (3) at ({0.5*sqrt(3)},-0.5);
\coordinate [] (2) at (-{0.5*sqrt(3)},-0.5);
\coordinate [] (1) at (0,1);
\coordinate [] (0a) at (0.3,0);
\coordinate [] (0b) at (-0.3,0);

\draw[gray, opacity=0.5,fill=lightgray,fill opacity=0.5] (1)--(0b)--(0a)--cycle;
\draw[gray, opacity=0.5,fill=lightgray,fill opacity=0.5] (2)--(0b)--(0a)--cycle;
\draw[gray, opacity=0.5,fill=lightgray,fill opacity=0.5] (3)--(0b)--(0a)--cycle;

\filldraw[darkgray] (0a) circle (0.07);
\filldraw[darkgray] (0b) circle (0.07);

\end{scope}

\begin{scope}[xshift=200, yshift=0]

\coordinate [] (3) at ({0.5*sqrt(3)},-0.5);
\coordinate [] (2) at (-{0.5*sqrt(3)},-0.5);
\coordinate [] (1) at (0,1);
\coordinate [] (0a) at (0.43,0.24);
\coordinate [] (0b) at (-0.43,0.24);
\coordinate [] (0c) at (0,-0.5);

\draw[gray, opacity=0.5,fill=lightgray,fill opacity=0.5] (1)--(0a)--(0b)--cycle;
\draw[gray, opacity=0.5,fill=lightgray,fill opacity=0.5] (2)--(0b)--(0c)--cycle;
\draw[gray, opacity=0.5,fill=lightgray,fill opacity=0.5] (3)--(0a)--(0c)--cycle;

\filldraw[darkgray] (0c) circle (0.07);
\filldraw[darkgray] (0b) circle (0.07);
\filldraw[darkgray] (0a) circle (0.07);

\end{scope}

\end{tikzpicture}
\end{center}
which we jokingly call the \textbf{holy trinity} of ternary plex compositions. The plex diagram on the left corresponds to the so-called Bhattacharya-Mesner product \cite{mesner1990association,gnang2014combinatorial,gnang2020bhattacharya} and the plex diagram on the right was identified as an operation of ternary relations in the cybernetics literature \cite{mcculloch1967triadas,longyear1972further} and as a cubic matrix multiplication \cite{kerner2008ternary}. To the best of our knowledge, the middle plex diagram represents a ternary composition that has not been previously identified in the literature.\newline

No associativity-like properties are known for these ternary operations or, indeed, any other higher order composition of arrays. We believe that the hypergraph rewrite approach via plex diagrams that we propose here will serve as a powerful tool to unravel the mystery of ternary and higher associativity.

\section*{Acknowledgements}

We would like to thank Irida Altman, Nils Baas, Jos\'e Figueroa-O'Farrill, Tom Brzezinski, Bernard Rybolowicz, Mark Lawson, Prathyush Pramod, Maximilian Schich, Pau Enrique, \'Alvaro Moreno, Rafael N\'u\~nez, Stephen Wolfram, Mundy Otto Reimer and Silvia Butti for all the feedback and eye-opening conversations.

\printbibliography

\end{document}